\providecommand{\noopsort[1]{}}
\numberwithin{equation}{section}
\setlist{leftmargin=*}
\setlist[1]{labelindent=1.2\parindent}
\renewcommand\labelenumi{\textup{(\roman{enumi})}}
\renewcommand\theenumi\labelenumi
\renewcommand\labelenumii{(\alph{enumii})}
\renewcommand\theenumii\labelenumii
\renewcommand\theenumii\labelenumii
\newtheorem{thm}{Theorem}[section]
\newtheorem{cor}[thm]{Corollary}
\newtheorem{prop}[thm]{Proposition}
\newtheorem{lem}[thm]{Lemma}
\theoremstyle{remark}
\newtheorem{rem}[thm]{Remark}
\newtheorem{hyp}[thm]{Hypothesis}
\newtheorem{example}[thm]{Example}
\theoremstyle{definition}
\newtheorem{defn}[thm]{Definition}
\newcommand{\coloneqq}{\mathrel{\mathop:}=}
\renewcommand{\Re}{{\rm Re}\,}
\newcommand{\eps}{\varepsilon}
\newcommand{\one}{\mathds{1}}
\newcommand{\R}{\mathds{R}}
\newcommand{\C}{\mathds{C}}
\newcommand{\N}{\mathds{N}}
\newcommand{\I}{\mathds{1}}
\newcommand{\cB}{\mathcal{B}}
\newcommand{\cM}{\mathcal{M}}
\newcommand{\cD}{\mathcal{D}}
\newcommand{\cL}{\mathcal{L}}
\newcommand{\cA}{\mathcal{A}}
\newcommand{\Bb}{\mathcal{B}_b}
\newcommand{\Cb}{\mathcal{C}_b}
\newcommand{\Co}{\mathcal{C}_\infty}
\newcommand{\Bo}{\mathcal{B}_\infty}
\newcommand{\Mp}{\mathcal{M}^+}
\newcommand{\Cc}{\mathcal{C}_c}
\newcommand{\PP}{\mathds{P}}
\newcommand{\E}{\mathds{E}}
\newcommand{\la}{\langle}
\newcommand{\ra}{\rangle}
\DeclareMathOperator{\id}{\mathrm{id}}
\DeclareMathOperator{\tr}{\mathrm{tr}}
\newcommand{\weak}{\rightharpoonup}
\newcommand{\bp}{\stackrel{\mathrm{bp}}{\to}}
\begin{document}
\title[Feller generators with measurable lower order terms]{Feller generators\\ with measurable lower order terms}

\author[F.~K\"{u}hn]{Franziska K\"{u}hn}
\address[F.~K\"{u}hn]{TU Dresden, Fakult\"at Mathematik, Institut f\"{u}r Mathematische Stochastik, 01062 Dresden, Germany. E-Mail: \textnormal{franziska.kuehn1@tu-dresden.de}}

\author[M. Kunze]{Markus Kunze}
\address[M. Kunze]{Universit\"at Konstanz, Fachbereich Mathematik und Statistik, Fach 193, 78357 Konstanz, Germany. E-Mail: \textnormal{markus.kunze@uni-konstanz.de}}

\subjclass[2020]{60G53; 47A55; 47G20; 60J35; 60H10}
\keywords{Feller semigroup; perturbation; L\'evy-type operator; measurable coefficients; martingale problem; stochastic differential equation}

\begin{abstract}
We study perturbations of Feller generators under `lower order terms' with measurable coefficients. We investigate which properties of the original semigroup -- such as positivity, conservativeness and the Feller property -- are passed to the perturbed semigroup. We give several examples and discuss applications in the theory of martingale problems and stochastic differential equations with measurable coefficients.
\end{abstract}

\maketitle

\section{Introduction} 

Given two operators $A$ and $B$, it is a classical question to ask how the sum $A+B$ is defined and which properties of $A$ are preserved under the perturbation $B$, see e.g.\ \cite{kato95,ban06}. In this article, we consider this problem in the context of Feller semigroups and generators. We are interested in constructing Feller generators with measurable lower order terms, and this means that the perturbation does not take values in the space of continuous functions. In consequence, classical perturbation results from semigroup theory do not apply. \smallskip

Let $(T(t))_{t \geq 0}$ be a Feller semigroup, that is, a sub Markovian, strongly continuous semigroup $T(t): \Co(\R^d) \to \Co(\R^d)$ on the space $\Co(\R^d)$ of continuous functions vanishing at infinity. If the domain of the (infinitesimal) generator $A$ contains the test functions $\Cc^{\infty}(\R^d)$, then the Courr\`ege--van Waldenfels theorem, see e.g.\ \cite[Thm.\ 2.21]{ltp3}, shows that $\cA := A|_{\Cc^{\infty}(\R^d)}$ is a \emph{L\'evy-type operator}, i.e.\ an integro-differential operator of the form 
\begin{align}\label{eq.generalop}\begin{aligned}
	\cA f(x) &= b(x) \nabla f(x) + \frac{1}{2} \tr(Q(x) \nabla^2 f(x)) \\
	&\quad + \int_{\R^d \setminus \{0\}} (f(x+y)-f(x)-\nabla f(x) \cdot \I_{(0,1)}(|y|)) \, \nu(x,dy).
\end{aligned} \end{align}
Here $(b(x), Q(x), \nu (x, \cdot))_{x\in \R^d}$ are the \emph{(infinitesimal) characteristics} consisting of the \emph{drift coefficients} $b= (b_j): \R^d\to \R^d$, the \emph{diffusion coefficients} $Q=(q_{ij}) : \R^d\to \R^{d\times d}$ and the \emph{jumping kernels} $\nu : \R^d \to \cM^+(\R^d)$. We are interested in the following questions: If a L\'evy-type operator $B$ is a lower order perturbation of $A$, then under which conditions is (a realization of) $A+B$ the generator of a semigroup and which properties does the perturbed semigroup inherit from the original semigroup $(T(t))_{t \geq 0}$? Is the martingale problem for $A+B$ well-posed? Classical perturbations results can be used to tackle these questions if $B$ maps $D(A)$ into $\Co(\R^d)$, which in particular implies  that the infinitesimal characteristics of $B$ need to depend continuously on $x$. In this article, we investigate perturbations $B$ whose characteristics depend merely measurably on $x$. Allowing for discontinuous characteristics of $B$ creates a number of issues and subtleties. To give an example: If $B$ maps $D(A)$ into $\Co(\R^d)$, then the sub Markovianity of the perturbed semigroup can be verified using the positive maximum principle; this does not work any longer if $B$ has discontinuous coefficients, i.e.\ if we  work on the space $\Bb(\R^d)$ of bounded measurable functions rather than $\Co(\R^d)$. In fact, establishing the sub Markov property of the perturbed semigroup turns out to be rather delicate point. At the same time, being sub Markovian is crucial for the applications in the theory of stochastic processes, which we are interested in. To establish sub Markovianity, we will use an approximation argument which is of independent interest, e.g.\ it can be used to prove continuous dependence of solutions of certain martingale problems on the coefficients (see Theorem \ref{t.sg.conv}). Let us mention that our questions can be formulated equivalently in the context of pseudo-differential operators. Namely, the L\'evy-type operator $\cA$ can be seen alternatively as a pseudo-differential operator
\begin{equation}
	\cA f(x) = - \int_{\R^d} q(x,\xi) e^{ix \cdot \xi} \hat{f}(\xi) \, d\xi, \qquad f \in \Cc^{\infty}(\R^d),\;x \in \R^d, \label{eq.pseudo}
\end{equation}
where $\hat{f}$ is the Fourier transform and
\begin{equation}
	q(x,\xi) := -ib(x) \cdot \xi + \frac{1}{2} \xi \cdot Q(x) \xi + \int_{\R^d \setminus \{0\}} (1-e^{iy \cdot \xi}+iy \cdot \xi \I_{(0,1)}(|y|)) \, \nu(x,dy)
	\label{eq.symbol}
\end{equation}
is the so-called \emph{symbol} of $\cA$. We study under which conditions the pseudo-differential operator with symbol $q(x,\xi)+p(x,\xi)$ gives rise to a Feller semigroup if the symbol $p(x,\xi)$ of the perturbation $B$ depends merely measurably on $x$. \smallskip

If the symbol -- or equivalently, the characteristics -- of a L\'evy-type operator satisfies suitable smoothness conditions, then general results from symbolic calculus show that the closure of $(\cA,\Cc^{\infty}(\R^d))$ is the generator of a Feller semigroup, see e.g.\ \cite{hoh,J05}. Under the milder assumption that the coefficients are H\"older continuous, the situation is already much more complicated and there is an immense literature on the question whether the L\'evy-type operator $\cA$ gives rise to a Feller semigroup, see e.g. \cite{ltp3,hoh,J02,J05,lm6} for a survey. None of these results applies in our framework since we are dealing with discontinuous coefficients. For the particular case that the diffusion coefficient $Q$ is strictly elliptic, there are general results in the literature which allow discontinuous coefficients, see e.g.\ \cite[Theorem 2.1.43]{J02} and also \cite{taira1,taira2} for processes on bounded domains. Moreover, the well-posedness of the $(\cA,\Cc^{\infty}(\R^d))$-martingale problem is well studied in this case, cf.\ \cite{stroock75}. No such general results are available if the operator has a vanishing diffusion component $Q \equiv 0$. There are some perturbation results for martingale problems which allow discontinuous coefficients, including bounded perturbations \cite[Section 4.10]{ek} and perturbations of L\'evy generators e.g.\ in \cite{chen15,komatsu84,mik14,peng}. Note that already the existence of a solution to the martingale-problem can be highly non-trivial if the coefficients are discontinuous: While there are general existence results for L\'evy-type operators with continuous coefficients, cf.\ \cite[Theorem 3.15]{hoh}, there are no such results for the discontinuous framework; see \cite{kuehn-mp} and the references therein. Let us point out that the existence of (unique) solutions to martingale problems associated with L\'evy-type operators can be used to deduce the existence of (unique) weak solutions to L\'evy-driven stochastic differential equations (SDEs)
\begin{equation*}
	dX_t = b(X_{t-}) \, dt + \sigma(X_{t-}) \, dL_t,
\end{equation*}
see Subsection~\ref{s.sde} for details. In particular, drift(-type) perturbations of L\'evy processes (i.e.\ $\sigma \equiv 1$) have been studied quite intensively, \cite{portenko94,tanaka,chen15,kim14,kin20} to mention just a few classical and recent works.

The following general result on Feller semigroups with measurable lower order terms is obtained by combining our main results Theorem \ref{t.perturbeuclidean} and Theorem \ref{t.mp.wellposed}.

\begin{thm} \label{t.generic} 
	Let $(T(t))_{t \geq 0}$ be a Feller semigroup with generator $A$ such that $\Cc^\infty(\R^d)$ is a core for $A$. Assume that there are $\rho \in (0,2)$ and $\varphi \in L^1(0,1)$ such that 
	\begin{equation*}
		\|T(t) f\|_{\Cb^{\rho}} \leq \varphi(t)\|f\|_{\infty}, \qquad t \in (0,1),\, f \in \Bb(\R^d),
	\end{equation*}
	Let $\hat{B}$ be a L\'evy-type operator
	\begin{equation}
		\hat{B}f(x) = b(x) \nabla f(x) + \int_{\R^d \setminus\{0\}} (f(x+y)-f(x)-\nabla f(x) \cdot y \I_{(0,1)}(|y|)) \, \mu(x,dy),
		\label{e.perturb}
	\end{equation}
	where the drift $b$ and the jumping kernels $\mu(x,dy)$ depend measurably on $x \in \R^d$. Assume that 
	\begin{equation*}
		\sup_{x \in \R^d} \left( |b(x)| + \int_{\R^d} \min\{1,|y|^{\beta}\} \, \mu(x,dy)\right)<\infty
	\end{equation*}
	for some $\beta \in [0,\rho)$ and that the compensated drift is zero in case that $\rho \leq 1$. Then 
\begin{enumerate}
[(a)]
\item The bp-closure of $A+\hat B$ is the full generator of a Markovian $\Cb$-Feller semigroup $S$, i.e.\ $S$ leaves $\Cb(\R^d)$ invariant and its restriction to that space is pointwise continuous. Moreover, $S$ enjoys the strong Feller property;
\item The $(A+\hat{B}, \Cc^\infty(\R^d))$-martingale problem is well-posed;
\item If additionally the tightness condition
	\begin{equation*}
		\lim_{R \to \infty} \sup_{x \in \R^d} \mu(x,\{y \in \R^d; |y| \geq R\})=0
	\end{equation*}
	holds, then $S$ is a $\Co$-semigroup, i.e.\ $S$ leaves $\Co(\R^d)$ invariant and its restriction to that space is strongly continuous. Thus $S$ is a Feller semigroup in this case.
	\end{enumerate}
\end{thm}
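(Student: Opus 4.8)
The plan is to deduce all three assertions from the two main theorems the statement combines, Theorem~\ref{t.perturbeuclidean} and Theorem~\ref{t.mp.wellposed}; granting those, the only work at this level is to verify that the L\'evy-type operator $\hat B$ in \eqref{e.perturb} is an admissible lower order perturbation in their sense, which rests entirely on the smoothing estimate for $(T(t))_{t\ge0}$.

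First I would show that the moment bounds on the characteristics of $\hat B$ force $\hat B$ to be a bounded operator from $\Cb^{\rho}(\R^d)$ into $\Bb(\R^d)$. If $\rho \in (1,2)$, then for $f \in \Cb^{\rho}$ the drift term satisfies $|b(x)\nabla f(x)| \le \|b\|_{\infty}\|f\|_{\Cb^{\rho}}$, while a second-order Taylor expansion together with the $(\rho-1)$-H\"older regularity of $\nabla f$ gives $|f(x+y)-f(x)-\nabla f(x)\cdot y\,\I_{(0,1)}(|y|)| \le \|f\|_{\Cb^{\rho}}\min\{1,|y|^{\rho}\}$; since $\rho > \beta$ one has $\min\{1,|y|^{\rho}\} \le \min\{1,|y|^{\beta}\}$, so the jump part is controlled by $\|f\|_{\Cb^{\rho}}\sup_x \int \min\{1,|y|^{\beta}\}\,\mu(x,dy) < \infty$. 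If $\rho \le 1$, the hypothesis that the compensated drift vanishes means $\hat B$ is realized on $\Cb^{\rho}(\R^d)$ in the pure-jump form $\hat B f(x) = \int (f(x+y)-f(x))\,\mu(x,dy)$, an integral that converges absolutely because $\rho > \beta$ and is estimated the same way using the $\rho$-H\"older continuity of $f$. In either case $\|\hat B f\|_{\infty} \le C\|f\|_{\Cb^{\rho}}$, and feeding in the smoothing estimate yields $\|\hat B\,T(t)f\|_{\infty} \le C\varphi(t)\|f\|_{\infty}$ for $t \in (0,1)$ with $C\varphi \in L^1(0,1)$ -- exactly the integrability needed to run the perturbation construction.

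With this in place, Theorem~\ref{t.perturbeuclidean} applies and gives (a): it produces a pointwise continuous, (sub-)Markovian semigroup $S$ on $\Bb(\R^d)$ that leaves $\Cb(\R^d)$ invariant and whose full generator is the bp-closure of $A+\hat B$. The strong Feller property follows because $S(t)$ is given by a perturbation series in which every term beyond the leading $T(t)$ contains a factor $T(s)$ with $s>0$, and $T(s)$ maps $\Bb(\R^d)$ into $\Cb^{\rho}(\R^d)\subset\Cb(\R^d)$; if $(T(t))_{t\ge0}$ is conservative then so is $S$, since $\hat B\one=0$. Part (b) is then immediate from Theorem~\ref{t.mp.wellposed}, whose hypotheses coincide with those just checked: existence of a solution of the $(A+\hat B,\Cc^{\infty}(\R^d))$-martingale problem is provided by the Markov process associated with $S$, and uniqueness follows from the fact that any solution has the one-dimensional marginals prescribed by $S$.

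For (c) I would add the observation that, under the tightness condition, $\hat B$ maps $\Co(\R^d)\cap\Cb^{\rho}(\R^d)$ into $\Co(\R^d)$: for $f\in\Co(\R^d)$ the drift term and the localized part of the jump term vanish at infinity, while the large-jump contribution is uniformly small because it is dominated by $\sup_x \mu(x,\{|y|\ge R\})\to 0$. Since $T(t)$ preserves $\Co(\R^d)$ and $T(s)$ maps $\Bb(\R^d)$ into $\Cb^{\rho}(\R^d)$, an induction along the perturbation series shows that each term maps $\Co(\R^d)$ into itself, hence so does $S(t)$; strong continuity of $S$ on $\Co(\R^d)$ then follows from its pointwise continuity together with the $\Co$-invariance, so $S$ is a Feller semigroup. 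I expect the real difficulty to lie not in this reduction but inside Theorem~\ref{t.perturbeuclidean}, namely in showing that the perturbation series sums to a \emph{positive}, sub-Markovian operator on $\Bb(\R^d)$ -- where the positive maximum principle is no longer available -- which, as the introduction indicates, is handled by approximating $\hat B$ by operators with continuous coefficients in the spirit of Theorem~\ref{t.sg.conv}.
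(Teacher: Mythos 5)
Your reduction is correct and matches the paper exactly: the authors obtain Theorem~\ref{t.generic} precisely by combining Theorem~\ref{t.perturbeuclidean} (yielding (a) through Corollary~\ref{c.core} and Remark~\ref{r.markovian}, and (c)) with Theorem~\ref{t.mp.wellposed} (yielding (b)), and your verification that $\hat B$ is bounded from $\Cb^\rho(\R^d)$ into $\Bb(\R^d)$, hence that Hypotheses~\ref{h1b} and~\ref{h2} hold, is the same Taylor-expansion computation carried out after Hypothesis~\ref{h2}. Your internal sketch of (c) -- showing $\hat B$ maps $\Co(\R^d)\cap\Cb^\rho(\R^d)$ into $\Co(\R^d)$ directly, using that $\nabla f$ vanishes at infinity for such $f$ by a Landau-type interpolation argument -- is a valid (and in fact slightly more direct) route than the paper's, which in the proof of Theorem~\ref{t.perturbeuclidean}(c) first handles compactly supported $f$ and then extends through the core of $A$ (cf.\ also Remark~\ref{rem.nocore}); but since you invoke that theorem anyway, this elaboration is redundant rather than load-bearing.
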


Theorem \ref{t.generic} applies to a wide class of Feller generators $A$ including strictly elliptic differential operators (see Section~\ref{s.diff}), generators of certain L\'evy processes (see Section~\ref{s.levy}) and operators of variable order (see Section~\ref{s.varorder}). Moreover, the well-posedness of the martingale problem in (b) yields uniqueness results for L\'evy-driven SDEs, see Section~\ref{s.sde}.  \medskip

This article is organized as follows. In Section~\ref{kernel}, we recall some notions concerning kernel operators, in Section~\ref{sg} those concerning semigroups and their generators. Here we also establish the connection between cores of of Feller generators and bp-cores of the corresponding full generators; this is important for our applications to martingale problems. In Section~\ref{pertFeller} we recall a perturbation result from \cite{K13} and prove our first main result (Theorem \ref{t.perturbFeller}). Section~\ref{resConv} concerns convergence results for perturbed semigroups, which is crucial for establishing sub Markovianity in Theorem \ref{t.generic}. Sections 2 -- 5 concern abstract state spaces $E$ rather than $\R^d$ and thus do not take the special structure of the operators $\cA$ in \eqref{eq.generalop} into account.

The remaining sections 6 -- 8 apply the abstract results to L\'evy-type operators as in \eqref{eq.generalop}. Section~\ref{perLTO} concerns the actual perturbation result and Section~\ref{mp} the corresponding martingale problem. The concluding Section~\ref{s.examples} contains examples and discusses applications in the theory of stochastic differential equations.

Moreover, there are two appendices which contain results that are used in the proof of Theorem \ref{t.perturbeuclidean} and which we believe to be of independent interest.

\section{Kernel operators} \label{kernel}

Throughout, $E$ is a locally compact Polish space. We endow $E$ with its Borel $\sigma$-algebra $\cB (E)$.
The spaces of bounded and measurable resp.\ bounded and continuous functions on $E$ are denoted by $\Bb(E)$ and $\Cb(E)$ respectively
whereas $\Co(E)$ refers to the space of continuous functions vanishing at $\infty$, i.e.\ of those continuous functions $f: E\to \R$ such that for every $\eps >0$ we find a compact set $K \subset E$ with $|f(x)|\leq \eps$ for all $x\in E\setminus K$.

A \emph{kernel} on $E$ is a map $k: E\times \cB(E) \to \mathds{R}$ such that 
\begin{enumerate}
\item the map $x\mapsto k(x,A)$ is measurable for every $A \in \cB(E)$;
\item  $k(x, \cdot)$ is a (signed) measure for every $x \in E$;
\item  $\sup_{x\in E} |k|(x, E) <\infty$, where $|k|(x, \cdot)$ denotes the total variation of $k(x, \cdot)$.
\end{enumerate}
If $k(x, \cdot)$ is a positive measure for every $x \in E$, then $k$ is called a \emph{positive kernel}; if every $k(x, \cdot)$ is  a (sub) probability measure, $k$ is a \emph{(sub) Markovian kernel}.

To every kernel $k$ on $E$, we can associate a bounded linear operator $T$ on $\Bb(E)$ by setting
\begin{equation}
	\label{eq.assop}
	(Tf)(x) \coloneqq \int_E f(y) \, k(x, dy), \quad f \in \Bb(E),\; x \in E.
\end{equation}
We call an operator $T$ of this form  a \emph{kernel operator} on $\Bb(E)$. It turns out that a bounded, linear operator on $\Bb(E)$ is a kernel operator if, and only if, $T$ is continuous with respect to the weak topology $\sigma \coloneqq \sigma (\Bb(E), \cM_b (E))$ induced by the space $\cM_b(E)$ of bounded signed measures on $E$ (see e.g.\ \cite[Prop.\ 3.5]{k11}). For sequences, $\sigma$-convergence is nothing else than \emph{bp-convergence} (bp is short for \emph{b}ounded and \emph{p}ointwise), i.e.\ $f_n \to f$ with respect to $\sigma$ if, and only if, $\sup_{n \in \N} \|f_n\|_{\infty}<\infty$ and $f_n \to f$ pointwise. Indeed, by dominated convergence, bp-convergence implies $\sigma$-convergence; the converse follows from the uniform boundedness principle. 
We will write $\cL(\Bb(E), \sigma)$ for the space of $\sigma$-continuous linear operators on $E$, i.e.\ kernel operators. Note that any such operator is automatically bounded. In what follows, we write $\weak$ to indicate convergence with respect to $\sigma$ while we reserve $\to$ to indicate convergence with respect to the supremum norm.
If $T$ is a bounded operator defined via \eqref{eq.assop} on $\Cb(E)$ or $\Co(E)$, then $T$ is also called a kernel operator (on $\Cb(E)$ resp.\ $\Co(E)$); it can be extended in a unique way to a kernel operator on $\Bb(E)$. As is well known, every bounded operator on $\Co(E)$ is a kernel operator and can thus be extended to a kernel operator on $\Bb(E)$, see e.g.\ \cite[Cor.\ 21.12]{mims}.

In applications, it is often of interest if a kernel operator on $\Bb(E)$ leaves one of the spaces $\Cb(E)$ or $\Co(E)$ invariant. Of particular interest is the case where a kernel operator $T$ maps $\Bb(E)$ into $\Cb(E)$. Such an operator is called \emph{strong Feller operator}. For our perturbation results, it will be important to know under which conditions a kernel operator leaves the space
\[
\Bo(E) \coloneqq \big\{ f\in \Bb(E): \forall \eps >0\, \exists\, K\Subset E : |f(x)| \leq \eps\, \forall \, x\in E\setminus K\big\}.
\]
invariant. We present two results on this topic.

\begin{lem}\label{l.b0}
	Let $T$ be a kernel operator with associated kernel $k$. Then $T\Bo(E)\subset \Bo(E)$ if and only if $k(\cdot, K) \in \Bo(E)$ for every relatively compact sets $K$. 
\end{lem}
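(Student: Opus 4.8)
The plan is to prove the two implications separately; the direction from left to right is immediate, while the reverse direction needs a short approximation argument. For ``$T\Bo(E)\subset\Bo(E)\Rightarrow k(\cdot,K)\in\Bo(E)$'', I would first note that for a relatively compact Borel set $K$ the indicator $\one_K$ already belongs to $\Bo(E)$: the closure $\overline K$ is compact and $\one_K$ vanishes off $\overline K$, so $\overline K$ serves as the exhausting compact set for every $\eps>0$. Since $T\one_K=k(\cdot,K)$ by the definition \eqref{eq.assop} of the associated kernel, the assumed invariance of $\Bo(E)$ gives $k(\cdot,K)\in\Bo(E)$ at once.

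For the converse, the first step is to record two elementary facts. First, $\Bo(E)$ is a norm-closed subspace of $\Bb(E)$: if $g_n\to g$ uniformly with $g_n\in\Bo(E)$, then for $\eps>0$ one picks $n$ with $\|g-g_n\|_\infty\le\eps/2$ and $K$ compact with $|g_n|\le\eps/2$ off $K$, so that $|g|\le\eps$ off $K$. Second, every bounded Borel function $g$ that vanishes outside a fixed relatively compact Borel set $K$ is a uniform limit of simple functions $g_n=\sum_j c_j\one_{A_j}$ with each $A_j\subset K$ Borel (hence relatively compact): this is the standard layer-cake approximation, taking the $A_j$ to be preimages of a fine partition of the range of $g$, intersected with $K$, so that automatically $g_n=g=0$ off $K$. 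For such $g_n$ one has $Tg_n=\sum_j c_j\,k(\cdot,A_j)\in\Bo(E)$ by hypothesis, while $\|Tg_n-Tg\|_\infty\le c\,\|g_n-g\|_\infty\to 0$ with $c\coloneqq\sup_{x\in E}|k|(x,E)<\infty$; closedness then yields $Tg\in\Bo(E)$.

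The second step is a truncation. Let $f\in\Bo(E)$ and $\eps>0$; we may assume $c>0$, since $c=0$ forces $T=0$. Choose a compact $K$ with $|f|\le\eps/(2c)$ on $E\setminus K$ and split $f=f\one_K+f\one_{E\setminus K}$. The tail term is controlled uniformly, $\|T(f\one_{E\setminus K})\|_\infty\le c\,\|f\one_{E\setminus K}\|_\infty\le\eps/2$, whereas $f\one_K$ is bounded, Borel and supported in the relatively compact set $K$, so $T(f\one_K)\in\Bo(E)$ by the first step and hence $|T(f\one_K)|\le\eps/2$ off some compact $K'$. Therefore $|Tf|\le\eps$ on $E\setminus K'$, and since $\eps>0$ was arbitrary, $Tf\in\Bo(E)$.

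I do not anticipate a genuine obstacle here; the only point requiring (mild) care is keeping the level sets $A_j$ of the approximating simple functions inside the relatively compact set $K$ — rather than passing through simple functions whose ``background'' level set is merely cocompact — so that the hypothesis $k(\cdot,A_j)\in\Bo(E)$ genuinely applies. Implicitly, ``relatively compact set'' in the statement must be read as ``relatively compact Borel set'', which is in any case forced by $k(\cdot,K)$ having to be well defined.
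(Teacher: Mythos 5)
Your proof is correct and takes essentially the same route as the paper: approximate a compactly (or relatively compactly) supported function uniformly by simple functions whose nonzero level sets lie inside that set, apply the hypothesis to the indicators, use norm-closedness of $\Bo(E)$, and then reduce a general $f\in\Bo(E)$ to the compactly supported case by truncating off a large compact set. The only cosmetic difference is that you phrase the final reduction as a direct $\eps/2$-split whereas the paper phrases it as "$f$ is a uniform limit of compactly supported measurable functions plus closedness of $\Bo(E)$," which is the same estimate.
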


\begin{proof}
As $T\one_K = k(\cdot, K)$ and $\one_K \in \Bo(E)$ for a  relatively compact set $K$, the stated condition is certainly necessary. To prove its sufficiency, consider a function $f\in \Bo(E)$ with compact support $S$. We find a sequence of simple functions $\tilde f_n$ that converges uniformly to $f$. If we set $f_n \coloneqq \tilde f_n \one_S$, then also the sequence $f_n$ converges uniformly to $f$ and, moreover, for $c\neq 0$ and every $n\in \N$ the set $\{f_n = c\}$ is relatively compact.
Thus, $Tf_n\in \Bo(E)$. As $Tf_n \to Tf$ uniformly and since $\Bo(E)$ is closed with respect to the supremum norm, $Tf\in \Bo(E)$. The case of a general function $f\in \Bo(E)$ follows from this by approximating $f$ uniformly with a sequence of measurable functions with compact support. 
\end{proof}

If the operator $T$ is positive, then invariance of $\Co(E)$ implies invariance of $\Bo(E)$.

\begin{lem}\label{l.c0}
	Let $T$ be a positive kernel operator with $T\Co(E)\subset \Co(E)$. Then we have $T\Bo(E) \subset \Bo(E)$. If, in addition, $T$ has the strong Feller property, then $T\Bo(E) \subset \Co(E)$. 
\end{lem}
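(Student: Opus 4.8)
The plan is to deduce both assertions from Lemma~\ref{l.b0} together with the elementary identity $\Bo(E)\cap\Cb(E)=\Co(E)$, which holds because a bounded \emph{continuous} function satisfies the vanishing-at-infinity condition defining $\Bo(E)$ precisely when it lies in $\Co(E)$.

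For the first inclusion I would argue as follows. By Lemma~\ref{l.b0} it suffices to check that $k(\cdot,K)\in\Bo(E)$ for every relatively compact Borel set $K\subset E$. Fix such a $K$ and let $\overline{K}$ denote its (compact) closure. Since $E$ is locally compact and metrizable, Urysohn's lemma provides a function $g\in\Cc(E)\subset\Co(E)$ with $0\le g\le 1$ and $g\equiv 1$ on $\overline{K}$; in particular $\one_K\le g$. Positivity of $T$ then yields $0\le k(\cdot,K)=T\one_K\le Tg$ pointwise, and $Tg\in\Co(E)$ by hypothesis. Hence, given $\eps>0$, choosing a compact set $L\Subset E$ with $0\le Tg\le\eps$ on $E\setminus L$ forces $|k(\cdot,K)|\le\eps$ on $E\setminus L$. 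Since $k(\cdot,K)$ is bounded and measurable because $k$ is a kernel, this shows $k(\cdot,K)\in\Bo(E)$, and Lemma~\ref{l.b0} gives $T\Bo(E)\subset\Bo(E)$.

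For the second assertion, assume in addition that $T$ has the strong Feller property, i.e.\ $T\Bb(E)\subset\Cb(E)$. Then for $f\in\Bo(E)$ we have $Tf\in\Bo(E)$ by the part just proved and $Tf\in\Cb(E)$ by the strong Feller property, so $Tf\in\Bo(E)\cap\Cb(E)=\Co(E)$, which is the claim.

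I do not expect a serious obstacle in this argument. The only point requiring (minor) care is the construction of the continuous cut-off $g$ with $\one_K\le g$ and $Tg\in\Co(E)$, which is where local compactness of $E$ enters; note also that positivity of $T$ is used in an essential way to dominate $k(\cdot,K)$ by $Tg$. Everything else is a direct invocation of Lemma~\ref{l.b0} and monotonicity of $T$.
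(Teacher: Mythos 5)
Your proof is correct, and the underlying mechanism is the same as the paper's: positivity of $T$, the invariance $T\Co(E)\subset\Co(E)$, and the key fact that $k(\cdot,K)\in\Bo(E)$ for relatively compact $K$ together force $T\Bo(E)\subset\Bo(E)$, with the strong Feller addendum following from $\Co(E)=\Bo(E)\cap\Cb(E)$. The organization differs, though. The paper does not invoke Lemma~\ref{l.b0}: it takes $0\leq f\in\Bo(E)$, dominates it as $f\leq\|f\|_\infty\one_K+\tfrac{\eps}{\|T\|}\one_{E\setminus K}$ for a suitable compact $K$, applies $T$ to get $0\leq Tf\leq\|f\|_\infty\,k(\cdot,K)+\eps$, and then \emph{cites} (J05, Lem.~3.2.15) for the fact that $k(\cdot,K)\in\Bo(E)$. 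You instead first use Lemma~\ref{l.b0} to reduce the whole problem to showing $k(\cdot,K)\in\Bo(E)$, and then \emph{prove} that fact from scratch by sandwiching $\one_K\leq g$ under a Urysohn function $g\in\Cc(E)$ and using $0\leq T\one_K\leq Tg\in\Co(E)$. Your version is therefore a bit more modular and more self-contained (you re-derive the cited fact rather than quoting it); the paper's version is shorter because it leans on Lemma~\ref{l.b0}'s sibling result from the literature and skips the Urysohn step. Both are valid, and both use positivity essentially. One small remark: you should note explicitly (as you implicitly do by reducing to $\Bo(E)$-membership of $k(\cdot,K)$) that the general $f\in\Bo(E)$ case follows by linearity from the nonnegative case, or equivalently that your Lemma~\ref{l.b0} reduction already handles signed $f$; the paper treats only $f\geq 0$ and leaves the linearity step implicit.
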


\begin{proof}
Let $0\leq f\in \Bo(E)$. Given $\eps>0$, pick a compact set $K$ such that $|f(x)|\leq \eps/\|T\|$ for $x\in E\setminus K$. Then
\[
0\leq f\leq \|f\|_\infty \one_K + \frac{\eps}{\|T\|}\one_{E\setminus K}
\]
and, consequently,
\[
0\leq Tf \leq \|f\|_\infty k(\cdot, K) + \eps.
\]
Since $T\Co(E)\subset T\Co(E)$, \cite[Lem.\ 3.2.15]{J05} yields $k(\cdot, K) \in \Bo(E)$ and thus $Tf\in \Bo(E)$ follows as $\eps>0$ was arbitrary. The addendum follows from the identity $\Co(E) = \Bo(E)\cap \Cb(E)$.
\end{proof}

\section{Semigroups, (pseudo)resolvents and generators} \label{sg}

We now turn our attention to semigroups of kernel operators. A family $T= (T(t))_{t>0} \subset \cL (\Bb(E), \sigma)$ is called \emph{semigroup of kernel operators} if
\begin{enumerate}
\item $T(t+s) = T(t)T(s)$ for all $s,t>0$.
\item $T$ is \emph{exponentially bounded}, i.e.\ there are constants $\omega\in \R$, $M>0$ such that $\|T(t)\|\leq Me^{\omega t}$. We say that $T$ is \emph{of type $(M, \omega)$} to stress these constants.
\item The map $(t,x) \mapsto(T(t)f)(x)$ is measurable.
\end{enumerate}
It is not difficult to see that if $T$ is a semigroup of kernel operators of type $(M,\omega)$ then for every $\lambda\in \C$ with $\Re\lambda >0$ there is an operator $R(\lambda) \in \cL(\Bb(E), \sigma)$ such that
\begin{equation}
\label{eq.resolvent}
\la R(\lambda) f, \mu \ra = \int_0^\infty e^{-\lambda t}\la T(t)f, \mu\ra\, dt
\end{equation}
for all $f\in \Bb(E)$ and $\mu \in \cM_b(E)$.

In the terms of \cite[Def.\ 5.1]{k11}, a semigroup of kernel operators is an integrable semigroup on the norming dual pair $(\Bb(E), \cM_b(E))$. It turns out that the family $(R(\lambda))_{\Re \lambda>\omega}$ is a \emph{pseudo-resolvent}, i.e.\  it satisfies the resolvent identity $R(\lambda)-R(\mu)=(\mu-\lambda)R(\lambda)R(\mu)$, cf.\  \cite[Prop. 5.2]{k11}. For more details on pseudoresolvents we refer to \cite[Sect. III.4.a]{en}.

In general, the family $(R(\lambda))_{\Re\lambda >\omega}$ does \emph{not} consist of injective operators, and so it is not the resolvent of a (single-valued) operator. However, there is a \emph{multivalued} operator $\hat A$ such that $R(\lambda) = (\lambda - \hat A)^{-1}$ for $\Re\lambda >\omega$ (see \cite[Appendix A]{haase} for more information concerning multivalued operators). We call $\hat{A}$ the \emph{full generator} of the semigroup $T$. The full generator can be characterized equivalently as follows:
\begin{equation*}
	(f,g) \in \hat{A} \iff \forall t>0\::\: T(t) f-f = \int_0^t T_s g \, ds, 
\end{equation*}
cf.\ \cite[Prop. 5.7]{k11}. In particular, our terminology is consistent with that used by Ethier--Kurtz \cite[Sect.\ 1.5]{ek}.\smallskip

We now introduce additional properties that a semigroup of kernel operators might have. If $T$ is of type $(1,0)$, then $T$ is called a \emph{contraction} semigroup of kernel operators. If $T$ is a contraction semigroup of kernel operators and every operator $T(t)$ is positive, we say that $T$ is \emph{sub Markovian}. If additionally $T(t)\one = \one$ for all $t>0$, then $T$ is called \emph{Markovian}. 

We will call $T$ a \emph{$\Co$-semigroup} if $T(t)\Co(E)\subset \Co(E)$ and the restriction of $T$ to $\Co(E)$ is \emph{strongly continuous}, i.e.\ for every $f\in \Co(E)$ we have $T(t)f \to f$ with respect to $\|\cdot\|_\infty$ as $t\to 0$. A $\Co$-semigroup that is also sub Markovian is called a \emph{Feller semigroup}.

We will call $T$ a \emph{$\Cb$-semigroup} if $T(t)\Cb(E)\subset \Cb(E)$ and the restriction of $T$ to $\Cb(E)$ is 
\emph{stochastically continuous}, i.e.\ $T(t)f\weak f$ as $t\to 0$ for $f\in \Cb(E)$. We note that, given sub Markovianity, this continuity condition can be equivalently characterized in terms of the associated kernels $p_t$ by asking that $p_t(x, B(x, \eps)) \to 1$ as $t\to 0$ for every $x\in E$ and $\eps>0$, see \cite[Lem.\ 3.2.17]{J05}. A $\Cb$-semigroup that is also sub Markovian is called \emph{$\Cb$-Feller semigroup}.\smallskip

Let us point out that the above-defined objects are compatible with the classical notions. To wit: If $T$ is a $\Co$-semigroup, then the operator $R(\lambda)$ leaves the space  $\Co(E)$ invariant for $\Re \lambda >\omega$ and its restriction to that space is injective. Thus, the restriction of $(R(\lambda))_{\Re \lambda>\omega}$ is the resolvent of a (single valued) operator $A$. Moreover, $A$ is the \emph{part}
of $\hat A$ in $\Co(E)$, i.e.
\[
D(A) = \{ u \in \Co(E) : \exists f\in \Co(E) \mbox{ s.t.\ } (u,f) \in \hat A\},\qquad Au = f.
\]
Identifying $A$ with its graph, we have $A= \hat A\cap (\Co(E)\times \Co(E))$. By general semigroup theory, $A$ coincides with the generator of the semigroup $T|_{\Co(E)}$, defined as (norm-)derivative in $0$.

Conversely, if we start with a strongly continuous semigroup on $\Co(E)$, we may extend it to $\Bb(E)$ as any bounded linear operator on $\Co(E)$ is a kernel operator. In fact, the extended semigroup is a semigroup of kernel operators in the sense defined above (this follows from \cite[Lem.\ 6.1]{k11}). If the semigroup is sub Markovian, then the extension is also stochastically continuous, see \cite[Lem.\ 4.8.7]{J01}.\smallskip

We next discuss how to recover the full generator $\hat A$ from the operator $A$. To that end, we make use of operator cores. We recall that a subset $D\subset D(A)$ is called a \emph{core} for $A$ if for every $u\in D(A)$ we find a sequence $(u_n)\subset D$ with 
$u_n \to u$ and $Au_n \to Au$. To use cores in the context of multivalued operators, we employ the notion of bp-convergence defined in Section~\ref{kernel}. A set $M \subset \Bb(E)$ is called \emph{bp-closed} if with every bp-convergent sequence, it also contains its limit. The \emph{bp-closure} of a set $M$ is the smallest bp-closed subset of $\Bb(E)$ that contains $M$. We use these notions, mutatis mutandis, also in $\Bb(E)\times \Bb(E)$. We call a subset $M \subset \hat A$ a \emph{bp-core} if the bp-closure of $M$ equals $\hat A$.

\begin{lem}\label{l.bp}
Let $T$ be $\Co$-semigroup with full generator $\hat A$ and let $A$ be the part of $\hat A$ in $\Co(E)$. Let $D$ be a core for $A$. 
Then  $\{(u, Au) : u \in D\}$ is a bp-core for $\hat A$.
\end{lem}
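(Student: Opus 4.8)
The plan is to show the two inclusions implicit in the claim: that the bp-closure of $\{(u,Au):u\in D\}$ is contained in $\hat A$ (which is essentially the fact that $\hat A$, as the full generator of a semigroup of kernel operators, is bp-closed), and conversely that every element of $\hat A$ lies in the bp-closure of the graph of $A|_D$. The first inclusion is the easy one: the full generator is characterized by $(f,g)\in\hat A$ iff $T(t)f-f=\int_0^t T(s)g\,ds$ for all $t>0$, and if $(f_n,g_n)\to(f,g)$ in bp-sense then, since each $T(s)$ is a kernel operator and the map $(s,x)\mapsto(T(s)g_n)(x)$ is uniformly bounded (using exponential boundedness of the semigroup on the bounded time interval $[0,t]$ and $\sup_n\|g_n\|_\infty<\infty$), dominated convergence passes the limit through the integral and through $T(t)$, so $(f,g)\in\hat A$; in particular the bp-closure of $\{(u,Au):u\in D\}$, which is a subset of $\hat A$ since $A\subset\hat A$, stays inside $\hat A$.

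The substantive direction is the reverse inclusion. First I would reduce from $D$ to $D(A)$: since $D$ is a core for $A$ in the $\Co$-sense, every $u\in D(A)$ is a norm-limit of $u_n\in D$ with $Au_n\to Au$ in norm, hence also in bp-sense, so the bp-closure of $\{(u,Au):u\in D\}$ already contains $\{(u,Au):u\in D(A)\}$; it therefore suffices to show $\hat A$ is contained in the bp-closure of the graph of $A$ (the part of $\hat A$ in $\Co(E)$). So fix $(f,g)\in\hat A$ and pick $\lambda>\omega$; then $f=R(\lambda)h$ where $h:=\lambda f-g\in\Bb(E)$. The idea is to approximate $h$ in bp-sense by functions $h_n\in\Co(E)$ with $\sup_n\|h_n\|_\infty<\infty$ — for instance, mollify and truncate, using that $\Cc(E)$ (or $\Co(E)$) is bp-dense in $\Bb(E)$ — and set $u_n:=R(\lambda)h_n$. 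Since $T$ is a $\Co$-semigroup, $R(\lambda)$ maps $\Co(E)$ into $\Co(E)$ and is there the resolvent of $A$, so $u_n\in D(A)$ and $(\lambda-A)u_n=h_n$, i.e. $Au_n=\lambda u_n-h_n$. It then remains to check $u_n\to f$ and $Au_n\to g$ in bp-sense. Boundedness is immediate: $\|u_n\|_\infty=\|R(\lambda)h_n\|_\infty\le\frac{M}{\lambda-\omega}\sup_n\|h_n\|_\infty$. For pointwise convergence I would use the integral representation \eqref{eq.resolvent}: for fixed $x$, $\delta_x\in\cM_b(E)$, so $\langle R(\lambda)h_n,\delta_x\rangle=\int_0^\infty e^{-\lambda t}\langle T(t)h_n,\delta_x\rangle\,dt=\int_0^\infty e^{-\lambda t}(T(t)h_n)(x)\,dt$; since each $T(t)$ is a kernel operator, $(T(t)h_n)(x)=\int_E h_n\,k_t(x,\cdot)\to\int_E h\,k_t(x,\cdot)=(T(t)h)(x)$ by dominated convergence (the kernels $k_t(x,\cdot)$ have uniformly bounded total variation), and a second application of dominated convergence in the $t$-integral (dominated by $e^{-\lambda t}M e^{\omega t}\sup_n\|h_n\|_\infty$) gives $u_n(x)=\langle R(\lambda)h_n,\delta_x\rangle\to\langle R(\lambda)h,\delta_x\rangle=f(x)$. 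Consequently $Au_n=\lambda u_n-h_n\to\lambda f-h=g$ pointwise, and the bp-convergence $(u_n,Au_n)\to(f,g)$ follows. Hence $(f,g)$ lies in the bp-closure of $\{(u,Au):u\in D(A)\}$, and therefore in that of $\{(u,Au):u\in D\}$, completing the proof.

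The main obstacle is purely a matter of justification, not of a new idea: one must make sure that the approximating sequence $h_n\in\Co(E)$ of $h$ can be chosen bp-convergent (this is where one uses that $E$ is locally compact Polish, so that bounded measurable functions are bp-limits of continuous functions with compact support — a standard application of Lusin's theorem or of the monotone-class theorem together with Urysohn's lemma), and that all the interchanges of limit with the time integral and with the kernel integrals are licensed by uniform bounds coming from exponential boundedness of the semigroup and uniform boundedness of $\sup_n\|h_n\|_\infty$. No part of this requires injectivity of the $R(\lambda)$ on $\Bb(E)$; we only use injectivity of $R(\lambda)|_{\Co(E)}$, which holds because $T$ is a $\Co$-semigroup.
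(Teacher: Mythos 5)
There is a genuine gap in the central step of your argument. You claim that any $h\in\Bb(E)$ can be written as a bp-limit of a \emph{single} sequence $(h_n)\subset\Co(E)$, appealing to ``$\Co(E)$ is bp-dense in $\Bb(E)$'' and suggesting mollification or Lusin's theorem. This is false. The functions that arise as bp-limits of sequences from $\Co(E)$ are precisely the bounded functions of Baire class one, and this class is strictly smaller than $\Bb(E)$: for instance, on $E=[0,1]$ the indicator $\I_{\mathbb{Q}\cap[0,1]}$ is Borel but has no point of continuity on any closed subset, hence is not a pointwise limit of continuous functions. The notion of bp-density in Proposition~\ref{p.bpdense} refers to the \emph{bp-closure} — the smallest bp-closed set containing $\Co(E)$ — which is built by transfinitely iterating the sequential-limit operation; it does not assert sequential density. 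Lusin's theorem does not help either: it gives, for a fixed measure, continuity off a small set, not everywhere-pointwise approximation by a uniformly bounded sequence. Mollification requires a reference measure and in any case only gives pointwise a.e.\ convergence.

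The paper sidesteps exactly this obstacle by replacing the explicit approximation with a monotone-class style argument. After fixing $\lambda>\omega$ and setting $S=\{(u,\lambda u-Au):u\in D\}$ with bp-closure $\bar S\subset\lambda-\hat A$, one considers the set $C=\{f\in\Bb(E):\exists u,\ (u,f)\in\bar S\}$ — essentially the set of ``admissible right-hand sides'' $h$ in your notation. One then proves directly that $C$ is bp-closed: given $f_n\in C$ with $f_n\bp f$, the corresponding $u_n$ must equal $R(\lambda)f_n$ (by injectivity of $\lambda-\hat A$ as a multivalued operator), and the $\sigma$-continuity of $R(\lambda)$ gives $u_n\weak R(\lambda)f$, whence $(R(\lambda)f,f)\in\bar S$ and $f\in C$. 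Since $C$ is bp-closed and contains $(\lambda-A)D$, which is norm-dense in $\Co(E)$ and therefore has bp-closure $\Bb(E)$, it follows that $C=\Bb(E)$, and the injectivity of $\lambda-\hat A$ upgrades this to $\bar S=\lambda-\hat A$. Your computation showing that $h_n\bp h$ implies $R(\lambda)h_n\bp R(\lambda)h$ (hence $u_n\to f$ and $Au_n\to g$ in bp) is correct and is precisely the content of the $\sigma$-continuity of $R(\lambda)$ used in the paper; the missing ingredient is to package it as ``the good set is bp-closed'' rather than as a one-shot sequential approximation. The first half of your proof (that the bp-closure of the graph of $A|_D$ stays inside $\hat A$, because $\hat A$ is bp-closed) is fine and matches the paper.
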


\begin{proof}
Let $T$ be of type $(M, \omega)$ and fix $\lambda >\omega$. As $D$ is a core for $A$,  the set $(\lambda - A)D$ is $\|\cdot\|$-dense in $\Co(E)$. It follows that the bp-closure of $(\lambda - A)D$ contains $\Co(E)$ and thus equals all of $\Bb(E)$ (see Proposition \ref{p.bpdense}).

Put $S\coloneqq \{ (u, \lambda u -Au) : u\in D\}$.
We note that $\hat A$, and hence also $\lambda - \hat A$, is $\sigma$-closed and thus, in particular, bp-closed. As $S\subset\lambda- \hat A$ it follows that the bp-closure $\bar S$ is a subset of $\lambda - \hat A$. Now define $C\coloneqq \{ f\in \Bb(E) : \exists\,  u \mbox{ s.t.\ } (u, f) \in \bar S\}$.
Then $C$ is bp-closed.

Indeed, Let $(f_n) \subset C$ be bp-convergent to $f$. Pick $u_n$ such that $(u_n, f_n ) \in \bar S$. As $R(\lambda) = (\lambda - \hat A)^{-1}$, 
we see that $(R(\lambda)f_n, f_n) \in \hat B$. 
It follows that $u_n - R(\lambda) f_n \in \mathrm{ker}(\lambda -\hat A) = \{0\}$, i.e.\ $u_n = R(\lambda) f_n$.
By the $\sigma$-continuity of $R(\lambda)$, we have that $u_n \weak u\coloneqq R(\lambda)f$. This implies that
$(u,f) \in \bar S$ whence $f\in C$, proving that $C$ is bp-closed as claimed.

As $C$ contains $(\lambda - A)D$, it follows that $C= \Bb(E)$. But this entails that $\bar S= \lambda -\hat A$. We have proved that $\lambda -\hat A$ is the bp-closure of $S$ which is equivalent to the claim. 
\end{proof}

\section{Perturbation of (strong) Feller semigroups} \label{pertFeller}

We begin by recalling a result from \cite{K13} concerning the perturbation of semigroups of kernel operators consisting of strong Feller operators. We will use the following set of assumptions.

\begin{hyp}\label{h1}
Let $T$ be a $\Cb$-semigroup of type $(M, \omega)$  that consists of strong Feller operators. We denote by $\hat A$ the full generator of the semigroup and write $(R(\lambda))_{\Re\lambda>\omega}$ for the Laplace transform of $T$. Moreover, let $\hat B : D(\hat B) \to \Bb(E)$ be a single-valued linear operator with $D(\hat A) \subset D(\hat B)$
which satisfies the following assumptions:
\begin{enumerate}
\item For $t>0$ the operator $\hat B T(t)$, initially defined on $D(\hat A)$, has an extension to an operator in $\cL (\Bb(E), \sigma)$ (which, by slight abuse of notation, we still denote by $\hat BT(t)$);
\item $\hat BR(\lambda) \in \cL(\Bb(E),\sigma)$ for one/all $\lambda >\omega$;
\item the function $(t,x) \mapsto \hat B T(t)f(x)$ is measurable for $f\in \Bb(E)$;
\item there is a function $\varphi$ which is integrable in a neighborhood of $0$ such that $\|\hat B T(t)\|\leq \varphi(t)$ for $t>0$.
\end{enumerate}
\end{hyp}

\begin{thm}{{(\cite[Thm.\ 3.3]{K13})}} \label{t.k13}
Assuming Hypothesis \ref{h1} the operator $\hat A + \hat B$ (defined on $D(\hat A) \subset D(\hat B)$) is the full generator of a $\Cb$-semigroup $S$ that consists of strong Feller operators. This semigroup satisfies for $t>0$ the \emph{Duhamel formula}
\begin{equation}
\label{eq.formula}
S(t)f = T(t)f + \int_0^t S(t-s)\hat B T(s)f\, ds.
\end{equation}
Moreover, we can develop $S$ in its \emph{Dyson--Phillips series}:
\begin{equation}
\label{eq.dp}
S(t) = \sum_{n=0}^\infty S_n(t)\quad \mbox{where}\quad S_0(t) = T(t)\quad\mbox{and}\quad S_{n+1}(t)f = \int_0^t S_n(t-s)\hat B T(s)\, ds.
\end{equation}
Here, all integrals have to be understood in the weak $\cM_b(E)$-sense as in \eqref{eq.resolvent}.
\end{thm}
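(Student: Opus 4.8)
The plan is to reduce this to the abstract construction of perturbed pseudo-resolvents and then verify the resulting family really does come from a semigroup. The statement is Theorem~\ref{t.k13}, quoted from \cite{K13}, so strictly speaking it is cited rather than proved here; but let me sketch how one establishes it from Hypothesis~\ref{h1}. First I would work at the level of resolvents: under assumptions (ii) and (iv), for $\lambda$ large the operator $\hat B R(\lambda)\in\cL(\Bb(E),\sigma)$ has operator norm less than one (indeed $\|\hat B R(\lambda)\|\le \int_0^\infty e^{-\lambda t}\varphi(t)\,dt\to 0$ as $\lambda\to\infty$, after splitting off the tail where $T$ is exponentially bounded and $\varphi$ beyond a neighbourhood of $0$ is controlled by the semigroup bound). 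Hence $I-\hat B R(\lambda)$ is invertible on $\Bb(E)$ with $\sigma$-continuous inverse, and one sets
\begin{equation*}
	R_S(\lambda) \coloneqq R(\lambda)\bigl(I-\hat B R(\lambda)\bigr)^{-1} = \sum_{n=0}^\infty R(\lambda)(\hat B R(\lambda))^n.
\end{equation*}
A direct computation using the resolvent identity for $R(\lambda)$ shows the family $(R_S(\lambda))$ satisfies the resolvent identity, i.e.\ is a pseudo-resolvent in $\cL(\Bb(E),\sigma)$, and that $(\lambda-\hat A-\hat B)R_S(\lambda)=I$ on $\Bb(E)$ while $R_S(\lambda)(\lambda-\hat A-\hat B)=I$ on $D(\hat A)$, so that $R_S(\lambda)=(\lambda-(\hat A+\hat B))^{-1}$ as multivalued operators.

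The second half is to show this pseudo-resolvent is the Laplace transform of an actual semigroup of kernel operators $S$ with the stated properties. Here I would invoke the theory of integrable semigroups on the norming dual pair $(\Bb(E),\cM_b(E))$ from \cite{k11}: one checks the Hille--Yosida-type conditions for pseudo-resolvents on such pairs (the requisite estimates on $\|R_S(\lambda)^n\|$ following from the Dyson--Phillips-type expansion and assumption (iv)), which yields a $\sigma$-continuous semigroup $S$ with $R_S(\lambda)$ as its Laplace transform. To obtain the Dyson--Phillips series \eqref{eq.dp}, I would define $S_0=T$ and $S_{n+1}(t)f=\int_0^t S_n(t-s)\hat BT(s)f\,ds$ (the integral understood $\sigma$-weakly, legitimate by assumptions (i), (iii), (iv)), show by induction and Laplace transform that $\sum_n S_n(t)$ has Laplace transform $\sum_n R(\lambda)(\hat B R(\lambda))^n=R_S(\lambda)$, hence equals $S(t)$ by uniqueness of Laplace transforms on the dual pair; the Duhamel formula \eqref{eq.formula} is then the $n\ge 1$ tail of this identity resummed. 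Finally, the strong Feller property and $\Cb$-invariance of $S$ follow because each $S_n(t)$ inherits them: $T(t)$ is strong Feller by hypothesis, $\hat BT(s)\in\cL(\Bb(E),\sigma)$, and strong Feller operators are stable under composition and under $\sigma$-convergent integration, while the norm-convergence of the series (dominated by $\sum_n(\varphi^{*n})(t)$, a locally integrable function) transfers the property to the sum.

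The main obstacle is the convergence and measurability bookkeeping for the Dyson--Phillips series in the $\sigma$-topology rather than in operator norm: one must ensure that the iterated weak integrals $S_{n+1}(t)f=\int_0^t S_n(t-s)\hat BT(s)f\,ds$ are well-defined elements of $\Bb(E)$, that the map $(t,x)\mapsto S_n(t)f(x)$ stays jointly measurable at each stage (so the next integral makes sense), and that $\sum_n S_n(t)$ converges — which requires the convolution estimate $\|S_n(t)\|\le (\varphi^{*n})(t)$ together with $\sum_n \|\varphi^{*n}\|_{L^1(0,T)}<\infty$, the standard Gronwall/Volterra argument. Because $\varphi$ is only assumed integrable near $0$ and $\|T(t)\|$ grows at most like $Me^{\omega t}$, one handles the tail separately; the delicate point is that all these manipulations happen for operators that are merely $\sigma$-continuous, so one cannot freely use norm-continuity of $t\mapsto T(t)$ and must instead argue against test measures $\mu\in\cM_b(E)$ throughout. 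Everything else — the resolvent identity, the identification of the generator as $\hat A+\hat B$ on $D(\hat A)$, and the inheritance of the strong Feller property — is then comparatively routine.
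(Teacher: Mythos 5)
The paper does not actually prove Theorem~\ref{t.k13}; it is quoted verbatim from \cite{K13}, and the surrounding text only imports the accompanying Lemma~3.8 (reproduced here as Lemma~\ref{l.perturbedresolvent}) for the Neumann-series representation of the perturbed resolvent. So there is no in-text proof to compare against; what you have written is a reconstruction of the argument in \cite{K13}, and in outline it is the right one: define $R_S(\lambda)=R(\lambda)\sum_{n\ge 0}(\hat BR(\lambda))^n$ for $\lambda$ large, check that it is a pseudo-resolvent inverting $\lambda-(\hat A+\hat B)$, build the Dyson--Phillips series, match Laplace transforms, and propagate the strong Feller and $\Cb$-Feller properties term by term.

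One organizational point deserves a warning, though. You propose first to verify ``Hille--Yosida-type conditions'' for the pseudo-resolvent $R_S(\lambda)$ on the norming dual pair, ``the requisite estimates on $\|R_S(\lambda)^n\|$ following from the Dyson--Phillips-type expansion and assumption (iv)'', and only \emph{then} to introduce the time-domain series $\sum_n S_n(t)$ and compare Laplace transforms. That ordering is delicate: the $n$-fold powers of $R_S(\lambda)$ do not combine cleanly with the Neumann series, and without already knowing that $R_S(\lambda)$ is the Laplace transform of a semigroup there is no easy route to the uniform-in-$n$ bound $\|R_S(\lambda)^n\|\le M'(\lambda-\omega')^{-n}$. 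The cleaner (and presumably \cite{K13}'s) route is to take the Dyson--Phillips series as the \emph{definition} of $S$, prove convergence via the Volterra convolution estimate $\|S_n(t)\|\lesssim \varphi^{*n}(t)$ (which you do mention), verify the semigroup law by Fubini for the $\sigma$-weak integrals, and only then read off the resolvent bounds from $R_S(\lambda)=\int_0^\infty e^{-\lambda t}S(t)\,dt$. Everything else in your sketch — the identification of the generator, the joint measurability bookkeeping, and the inheritance of the strong Feller property via the Pettis-integral and dominated-convergence argument — is the correct content and matches what is needed, provided this one step is reorganized so the semigroup is constructed before the Hille--Yosida estimates are invoked rather than after.
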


It is a rather obvious question whether the new semigroup $S$ is a $\Co$-semigroup if this is the case for $T$. Unfortunately, without further assumptions on $\hat B$ this is not the case.

\begin{example}\label{ex.noc0}
For $E= \R$, consider the heat semigroup $T$, given by
\[
(T(t)f)(x) \coloneqq \frac{1}{\sqrt{2\pi t}} \int_\R \exp \left(-\frac{(x-y)^2}{4t} \right) f(y)\, dy\quad (x\in \R)
\]
for $f\in \Bb(\R)$. It is well known that $T$ is a $\Co$-semigroup which also consists of strong Feller operators. Now consider the operator $\hat B: f\mapsto f(0) \cdot \one_\R$. Then $\hat B \in \cL (\Bb(\R), \sigma)$ (whence it satisfies Hypothesis \ref{h1}, see
\cite[Ex.\ 3.4]{K13}). It follows that $\hat B T(s)f = c\cdot\one_\R$, for some constant $c=c(s,f)$. Consequently, $T(t-s)\hat B T(s)f= c$ for all $t\geq s$. 
If $0<f \in \Co(\R)$ then $c_t(f)\coloneqq \inf\{c(s,f) : 0\leq s \leq t\}>0$ for all $t>0$ and we can infer from the Dyson--Phillips expansion \eqref{eq.dp} that the perturbed semigroup $S$ does not leave $\Co(\R)$ invariant. Indeed, fixing $t>0$, we see that $S(t)f \geq S_1(t)f \geq c_t(f)\one_\R$ and the latter does not vanish at infinity.
\end{example}

\begin{thm}\label{t.perturbFeller}
Assume in addition to  Hypothesis \ref{h1} that $T$ is a positive $\Co$-semigroup. 
\begin{enumerate}
[(a)]
\item If $\hat D$ is a bp-core for $\hat A$, then $\{ (u, f+ \hat B u) : (u,f) \in \hat D\}$ is a bp-core for $\hat A + \hat B$.
\item If $\hat BT(t) \Bo(E) \subset \Bo(E)$ for every $t>0$, then $S$ is a $\Co$-semigroup. 
\end{enumerate}
\end{thm}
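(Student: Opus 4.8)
The plan is to prove (a) and (b) separately, relying on the Dyson--Phillips series \eqref{eq.dp} and the Duhamel formula \eqref{eq.formula} from Theorem~\ref{t.k13}, together with the bp-core machinery from Section~\ref{sg}.

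\textbf{Part (a).} First I would show that $\{(u,f+\hat Bu):(u,f)\in\hat D\}$ is contained in $\hat A+\hat B$, which is immediate since $\hat D\subset\hat A$ implies $u\in D(\hat A)\subset D(\hat B)$, so $(u,f+\hat Bu)\in\hat A+\hat B$ whenever $(u,f)\in\hat D$. The substance is that the bp-closure of this set is all of $\hat A+\hat B$. Fix $\lambda>\omega$. Since $\hat D$ is a bp-core for $\hat A$, the set $(\lambda-\hat A)\hat D:=\{\lambda u-f:(u,f)\in\hat D\}$ has bp-closure equal to $(\lambda-\hat A)\hat A=\Bb(E)$ (using that $\lambda-\hat A$ is $\sigma$-closed, hence bp-closed, and $\sigma$-continuity of $R(\lambda)$, exactly as in the proof of Lemma~\ref{l.bp}). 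Now $(\lambda-\hat A-\hat B)R_S(\lambda)=\id$ where $R_S(\lambda)$ is the pseudo-resolvent of $S$; from Theorem~\ref{t.k13} one has the identity $R_S(\lambda)=R(\lambda)(\id-\hat BR(\lambda))^{-1}$ or equivalently the relation $\hat BR(\lambda)\in\cL(\Bb(E),\sigma)$ and $R_S(\lambda)=R(\lambda)+R_S(\lambda)\hat BR(\lambda)$, and in particular $R_S(\lambda)\in\cL(\Bb(E),\sigma)$. I would then run the same closedness argument as in Lemma~\ref{l.bp}: put $S:=\{(u,\lambda u-f-\hat Bu):(u,f)\in\hat D\}\subset\lambda-\hat A-\hat B$, let $C:=\{g\in\Bb(E):\exists u,(u,g)\in\bar S\}$, and show $C$ is bp-closed using that $\lambda-\hat A-\hat B$ is injective with $\sigma$-continuous inverse $R_S(\lambda)$; since $C\supset(\lambda-\hat A)\hat D$ modulo the $\hat B$-correction, I need to check $(\lambda-\hat A)\hat D$ and $(\lambda-\hat A-\hat B)\hat D$ have the same bp-closure — this follows because $\hat BR(\lambda)$ is $\sigma$-continuous, so the map $g\mapsto g-\hat BR(\lambda)g$ is a $\sigma$-continuous bijection of $\Bb(E)$ and sends the dense set to a dense set. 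Hence $C=\Bb(E)$ and $\bar S=\lambda-\hat A-\hat B$, which is the claim.

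\textbf{Part (b).} Here I would use the Dyson--Phillips series \eqref{eq.dp}. The hypothesis $\hat BT(t)\Bo(E)\subset\Bo(E)$ for all $t>0$ is the key input. First, since $T$ is a positive $\Co$-semigroup, Lemma~\ref{l.c0} gives $T(t)\Co(E)\subset\Co(E)$ and hence $T(t)\Bo(E)\subset\Bo(E)$; so $S_0(t)=T(t)$ maps $\Bo(E)$ into $\Bo(E)$. Inductively, I claim $S_n(t)\Bo(E)\subset\Bo(E)$ for every $n$. Suppose $S_n(r)\Bo(E)\subset\Bo(E)$ for all $r>0$; then for $f\in\Bo(E)$, $\hat BT(s)f\in\Bo(E)$ for each $s$, so $S_n(t-s)\hat BT(s)f\in\Bo(E)$, and $S_{n+1}(t)f=\int_0^t S_n(t-s)\hat BT(s)f\,ds$ is a weak-$\cM_b(E)$ integral of $\Bo(E)$-valued functions; since $\Bo(E)$ is a bp-closed (indeed $\sigma$-closed, being the kernel of evaluation-against-certain-measures, or at least $\|\cdot\|$-closed and bp-closed) subspace of $\Bb(E)$ and the integral is a $\sigma$-limit of Riemann-type sums that stay in $\Bo(E)$, we get $S_{n+1}(t)f\in\Bo(E)$. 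I should make the "integral preserves $\Bo(E)$" step precise: one can write $\la S_{n+1}(t)f,\mu\ra=\int_0^t\la S_n(t-s)\hat BT(s)f,\mu\ra ds$, note that for $\mu$ supported outside a large compact set the integrand is small uniformly (using $\|S_n(r)\|\le\varphi^{*n}$-type bounds and $\Bo(E)$-membership), and conclude $S_{n+1}(t)f\in\Bo(E)$ by the characterization of $\Bo(E)$; alternatively invoke that $\Bo(E)$ is $\sigma$-closed so weak integrals of $\Bo(E)$-valued functions stay in $\Bo(E)$. Then $S(t)f=\sum_n S_n(t)f$ converges in $\|\cdot\|$ (the series converges absolutely by the $\varphi$-domination, as in \cite{K13}), and $\Bo(E)$ is norm-closed, so $S(t)\Bo(E)\subset\Bo(E)$.

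It remains to upgrade $\Bo(E)$-invariance to $\Co(E)$-invariance and strong continuity. Since $S$ is a $\Cb$-semigroup (Theorem~\ref{t.k13}), $S(t)\Cb(E)\subset\Cb(E)$; combined with $S(t)\Bo(E)\subset\Bo(E)$ and the identity $\Co(E)=\Bo(E)\cap\Cb(E)$ (used already in Lemma~\ref{l.c0}), we get $S(t)\Co(E)\subset\Co(E)$. For strong continuity on $\Co(E)$: $S$ is stochastically continuous as a $\Cb$-semigroup, so $S(t)f\weak f$ as $t\to0$ for $f\in\Co(E)$; since $S$ is positive (it is — as a limit of the positive $S_n(t)$? here I should be careful: positivity of $S$ is not claimed, so instead) — better: use the Duhamel formula $S(t)f-T(t)f=\int_0^t S(t-s)\hat BT(s)f\,ds$, bound the right side in $\|\cdot\|$ by $\sup_{r\le t}\|S(r)\|\int_0^t\varphi(s)\,ds\,\|f\|_\infty\to0$, and since $T(t)f\to f$ in $\|\cdot\|$ for $f\in\Co(E)$, deduce $S(t)f\to f$ in $\|\cdot\|$. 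Thus $S$ is a $\Co$-semigroup.

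\textbf{Main obstacle.} I expect the delicate point to be the rigorous justification that the weak-$\cM_b(E)$ integral defining $S_{n+1}(t)$ maps $\Bo(E)$ into $\Bo(E)$ — i.e.\ that $\Bo(E)$-valuedness of the integrand transfers to the integral, with uniform control near infinity — and, relatedly, pinning down the topology (norm vs.\ bp vs.\ $\sigma$) in which the Dyson--Phillips series converges so that norm-closedness of $\Bo(E)$ can be applied. The cleanest route is probably to observe that $\Bo(E)$ is $\sigma$-closed (it is the intersection of kernels of the $\sigma$-continuous functionals $f\mapsto\limsup_{K}\sup_{E\setminus K}|f|$... — more simply, $\Bo(E)=\bigcap_{K}\{f:\|f\one_{E\setminus K}\|\le\ldots\}$ is at least bp-closed), so weak integrals and bp-convergent series of $\Bo(E)$-valued functions remain in $\Bo(E)$; combined with the norm-absolute convergence of the Dyson--Phillips series from Theorem~\ref{t.k13}, the conclusion follows.
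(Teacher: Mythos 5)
Part (a) of your proposal is sound and takes a genuinely different route from the paper. You push the problem to the range side via $\lambda - \hat A - \hat B$ and use that, for $\lambda$ large enough so that $\|\hat B R(\lambda)\|<1$, the operator $\id - \hat B R(\lambda)$ is a bijection of $\Bb(E)$ that is a kernel operator; a kernel-operator bijection does indeed map bp-dense sets to bp-dense sets (bp-closedness of any preimage set under $\id-\hat BR(\lambda)$ follows from $\sigma$-continuity, and one does not even need $\sigma$-continuity of the inverse). The paper instead argues directly on graphs: it puts $W = \{(u,g-\hat Bu):(u,g)\in\bar S\}$ and shows $W$ is bp-closed by a single sequential argument that uses only the $\sigma$-continuity of $\hat B R(\lambda)$ from Hypothesis~\ref{h1}(ii); this works for any $\lambda>\omega$ and does not require choosing $\lambda$ large enough to invert $\id-\hat BR(\lambda)$, nor the auxiliary fact that kernel operators are norm-closed.

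Part (b) contains a genuine gap, which you yourself flag. Your proposed resolution — that $\Bo(E)$ is bp-closed, hence $\sigma$-closed, so the weak $\cM_b(E)$-integral of a $\Bo(E)$-valued integrand stays in $\Bo(E)$ — is false. On $E=\R$ the sequence $\one_{[0,n]}$ lies in $\Bo(\R)$, is uniformly bounded, and converges pointwise to $\one_{[0,\infty)}$, which is not in $\Bo(\R)$; so $\Bo(E)$ is not bp-closed, and since sequential $\sigma$-convergence coincides with bp-convergence it is not $\sigma$-closed either. The paper circumvents this by proving a strictly stronger induction hypothesis: $S_n(t)$ maps $\Bo(E)$ into $\Co(E)$, not merely into $\Bo(E)$. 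For $n=0$ this uses \emph{both} conclusions of Lemma~\ref{l.c0}, i.e.\ positivity together with the strong Feller property of $T$ (which is part of Hypothesis~\ref{h1}); you only used the first conclusion $T\Bo(E)\subset\Bo(E)$. With the stronger hypothesis, the integrand $s\mapsto S_n(t-s)\hat BT(s)f$ takes values in the \emph{separable} Banach space $\Co(E)$ whose dual is $\cM_b(E)$, so the Pettis measurability theorem upgrades the weak integral to a Bochner integral, and a Bochner integral with values in the closed subspace $\Co(E)$ of $\Bb(E)$ lies in $\Co(E)$. This is exactly the ``uniform control near infinity'' you were missing: it is bought by separability of $\Co(E)$, which $\Bo(E)$ lacks. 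Once $S_n(t)f\in\Co(E)$ for all $n$, norm-convergence of the Dyson--Phillips series for small $t$ and norm-closedness of $\Co(E)$ give $S(t)\Co(E)\subset\Co(E)$ for small $t$, and the semigroup law extends this to all $t>0$. Your strong-continuity argument via the Duhamel formula is correct and agrees with the paper.
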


\begin{proof}
(a) Let $\hat D$ be a bp-core for $\hat A$ and denote by $\bar S$ the bp-closure of $\{(u, f+ \hat B u), (u, f) \in \hat D\}$. As $\hat A + \hat B$ is a full generator, it is $\sigma$-closed and thus, in particular, bp-closed and it follows that $\bar S\subset \hat A +\hat B$. Note that this entails that for $(u,g) \in \bar S$ we have $u \in D(\hat A)$. 
We now define $W\coloneqq \{(u, g- \hat B u) : (u, g) \in \bar S\}$. Clearly, $\hat D \subset W \subset \hat A$. We prove that $W$ is bp-closed. 
To that end, let $(u_n, g_n-\hat B u_n)$ be a sequence in $W$ (thus $(u_n, g_n) \in \bar S$) such that $u_n$ bp-converges to $u$ and 
$f_n \coloneqq g_n - \hat B u_n$ bp-converges to $f$. Then $(u_n, f_n) \subset \bar A$ bp-converges to $(u, f)$. By the bp-closedness of $\hat A$, we have $(u,f)\in \hat A$. Picking $\lambda >\omega$, this is equivalent to $u= R(\lambda ) (\lambda u- f)$ and the same equality holds with $u/f$ replaced by $u_n/f_n$. Using that $\hat B R(\lambda)$ is $\sigma$-continuous by (ii) in Hypothesis \ref{h1}, we infer
\[
\hat B u_n = \hat B R(\lambda)(\lambda u_n - f_n) \weak \hat B R(\lambda) (\lambda u - f) = \hat B u.
\]
Setting $g\coloneqq f+\hat B u$, the sequence $(u_n, g_n) \subset \bar S$ bp-converges to $(u,g)$. As $\bar S$ is bp-closed, $(u,g) \in \bar S$ and thus $(u, f) \in W$. Since $\hat D$ is a bp-core for $\hat A$, it follows that $W= \hat A$. But this is equivalent to $\bar S = \hat A + \hat B$ which means that $\{(u, f+ \hat B u), (u, f) \in \hat D\}$ is indeed a bp-core for $\hat A + \hat B$.\medskip

(b) We first prove by induction that the operators $S_n(t)$ appearing in the Dyson--Phillips series \eqref{eq.dp} map $\Bo(E)$ to $\Co(E)$. In particular, they leave the space $\Bo(E)$ invariant.

For $n=0$ this follows from Lemma \ref{l.c0} and our assumption that $T$ is a positive $\Co$-semigroup that enjoys the strong Feller property.

Let us assume that, for some $n\in \N$, we have already proved that $S_n(t)$ maps $\Bo(E)$ to $\Co(E)$  for every $t>0$. Fix $f\in \Bo(E)$ and $t>0$. By our assumption, we have $\hat BT(s)f \in \Bo(E)$ for every $s\in (0,t)$. By induction hypothesis, $S_n(t-s)$ maps $\Bo(E)$ to $\Co(E)$, whence $S_n(t-s)\hat B T(s) f \in \Co(E)$ for $s\in (0,t)$. Since $\Co(E)$ is separable with dual space $\cM_b (E)$, the Pettis measurability theorem implies that the function $s\mapsto S_n(t-s)\hat BT(s)f$ is Bochner integrable which, in turn, implies that
$S_{n+1}(t)f = \int_0^t S_n(t-s)\hat B T(s)f\, ds \in \Co(E)$.\smallskip

Note that the series in \eqref{eq.dp} converges for small $t$ even in the operator norm. From this it follows that for small $t$ we have $S(t)f \in \Co(E)$. As $f$ was arbitrary, $S(t)\Co(E)\subset \Co(E)$ for small $t$ and hence, by the semigroup law, for all $t>0$. The strong continuity of the restriction of $S$ to $\Co(E)$ follows from that of $T$, the identity \eqref{eq.formula} and the observation that the integral in that formula converges to 0 in operator norm as $t\mapsto 0$. 
\end{proof}

\section{Resolvent convergence of the perturbed operators} \label{resConv}

In this section, we will consider a sequence $\hat{B}_n$ of perturbations that converges, in a certain sense, to the operator $\hat{B}$. We want to know under which assumptions and in which sense the pseudoresolvents $(\lambda - (\hat A + \hat{B}_n))^{-1}$ converge to 
$(\lambda - (\hat A + \hat B))^{-1}$ as $n\to \infty$. Besides being interesting in its own right, such a convergence result will also allow us to establish dissipativity and/or resolvent positivity for large classes of operators with measurable coefficients.

The key to prove our convergence result is the following Lemma, which is taken from \cite[Lem. 3.8]{K13}.

\begin{lem}\label{l.perturbedresolvent}
Assuming Hypothesis \ref{h1}, we have $\|\hat B R(\lambda)\|\to 0$ as $\lambda \to \infty$. Moreover, if $\lambda$ is so large that
$\|\hat BR(\lambda) \| <1$, then 
\[
(\lambda - (\hat A + \hat B))^{-1} = R(\lambda) \sum_{k=0}^\infty (\hat B R(\lambda))^k,
\]
where the latter series converges in operator norm.
\end{lem}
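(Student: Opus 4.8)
\textbf{Proof plan for Lemma \ref{l.perturbedresolvent}.}

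The plan is to exploit the Duhamel formula and the Dyson--Phillips series from Theorem \ref{t.k13} at the level of Laplace transforms. First I would establish that $\|\hat B R(\lambda)\| \to 0$ as $\lambda \to \infty$. Using \eqref{eq.resolvent}, for $\mu \in \cM_b(E)$ and $f \in \Bb(E)$ with $\|f\|_\infty \leq 1$ one has $\la \hat B R(\lambda) f, \mu\ra = \int_0^\infty e^{-\lambda t} \la \hat B T(t) f, \mu\ra \, dt$, which is justified because $\hat B R(\lambda) \in \cL(\Bb(E),\sigma)$ by Hypothesis \ref{h1}(ii) and the map $t \mapsto \hat B T(t) f$ is suitably measurable by (iii). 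Hence $\|\hat B R(\lambda)\| \leq \int_0^\infty e^{-\lambda t} \|\hat B T(t)\| \, dt \leq \int_0^\infty e^{-\lambda t} \varphi(t)\, dt$ by Hypothesis \ref{h1}(iv), where I extend $\varphi$ so that it is still integrable near $0$ and the tail is controlled by the exponential bound $\|T(t)\| \leq M e^{\omega t}$ together with the boundedness of $\hat B R(\lambda_0)$ for a fixed $\lambda_0 > \omega$ (so that $\hat B T(t)$ grows at most exponentially). Splitting the integral at a small $\delta > 0$: the part over $(\delta, \infty)$ is bounded by $e^{-\lambda \delta}$ times a constant and tends to $0$; the part over $(0,\delta)$ is bounded by $\int_0^\delta \varphi(t)\,dt$, which can be made arbitrarily small by choosing $\delta$ small. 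This gives $\|\hat B R(\lambda)\| \to 0$.

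Next, fix $\lambda$ large enough that $q := \|\hat B R(\lambda)\| < 1$. Then $\sum_{k=0}^\infty (\hat B R(\lambda))^k$ converges in operator norm on $\Bb(E)$ (a Neumann series), and since each factor lies in $\cL(\Bb(E),\sigma)$, which is closed under composition and operator-norm limits, the sum lies in $\cL(\Bb(E),\sigma)$; the same holds after composing with $R(\lambda)$ on the left. Call this operator $V(\lambda) := R(\lambda) \sum_{k=0}^\infty (\hat B R(\lambda))^k$. It remains to identify $V(\lambda)$ with $(\lambda - (\hat A + \hat B))^{-1}$. I would do this by taking Laplace transforms in the Dyson--Phillips series \eqref{eq.dp}: one shows that the Laplace transform of $S_n(t)$ at $\lambda$ equals $R(\lambda) (\hat B R(\lambda))^n$. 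The base case $n=0$ is \eqref{eq.resolvent} itself. For the induction step, $S_{n+1}(t)f = \int_0^t S_n(t-s) \hat B T(s) f \, ds$ is a convolution, so its Laplace transform factors as (Laplace transform of $S_n$) composed with (Laplace transform of $\hat B T(\cdot) f$), which by \eqref{eq.resolvent} applied to the strong-Feller perturbed data equals $R(\lambda)(\hat B R(\lambda))^n \cdot \hat B R(\lambda) = R(\lambda)(\hat B R(\lambda))^{n+1}$. Summing over $n$ — and justifying the interchange of the (operator-norm convergent, for $\lambda$ large) sum with the Laplace integral — yields that $V(\lambda)$ is the Laplace transform of $S$, hence equals $(\lambda - (\hat A + \hat B))^{-1}$ since $\hat A + \hat B$ is by Theorem \ref{t.k13} the full generator of $S$.

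The main obstacle I anticipate is the careful handling of the weak $\cM_b(E)$-valued (as opposed to Bochner) integrals: the convolution identity for Laplace transforms and the Fubini-type interchange must be carried out in the $\sigma$-topology, testing against arbitrary $\mu \in \cM_b(E)$ and using the measurability hypotheses (iii) to ensure all scalar integrals make sense and Fubini applies. A secondary, more technical point is verifying the exponential growth bound on $\|\hat B T(t)\|$ for large $t$, needed to make the full Laplace integral $\int_0^\infty e^{-\lambda t}\|\hat B T(t)\|\,dt$ finite rather than just integrability of $\varphi$ near $0$; this follows by writing $\hat B T(t) = (\hat B R(\lambda_0))(\lambda_0 - \hat A) T(t)$ and using the semigroup bound, or more simply by the Dyson--Phillips estimate itself, but it should be spelled out. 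Given these, the identification of $V(\lambda)$ with the perturbed pseudoresolvent is then a formal consequence, and uniqueness of Laplace transforms (on the norming dual pair $(\Bb(E), \cM_b(E))$) closes the argument.
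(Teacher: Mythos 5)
This lemma is quoted from \cite[Lem.\ 3.8]{K13} and the present paper contains no proof of its own, so there is no internal argument to compare against; I can only assess whether your sketch is itself sound.

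Your overall strategy is correct, but there is one genuine slip. To control the tail of the Laplace integral $\int_0^\infty e^{-\lambda t}\|\hat B T(t)\|\,dt$, you propose to write $\hat B T(t) = (\hat B R(\lambda_0))(\lambda_0 - \hat A)T(t)$. This factorization is not available here: Hypothesis \ref{h1} does not assume $T$ analytic (nor even differentiable), so there is no reason that $T(t)$ should map $\Bb(E)$ into $D(\hat A)$, and hence $(\lambda_0 - \hat A)T(t)$ is not a well-defined bounded operator. The correct (and simpler) way to get the exponential tail bound is the semigroup factorization $\hat B T(t) = \hat B T(t_0)\,T(t-t_0)$ for a fixed $t_0>0$ with $\varphi(t_0)<\infty$ (such $t_0$ exist a.e.\ since $\varphi\in L^1$ near $0$), which gives $\|\hat B T(t)\|\le \varphi(t_0) M e^{\omega(t-t_0)}$ for $t>t_0$. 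With that substitution your splitting argument for $\|\hat B R(\lambda)\|\to 0$ goes through.

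For the identity $(\lambda - (\hat A + \hat B))^{-1} = R(\lambda)\sum_k(\hat B R(\lambda))^k$, your plan of taking Laplace transforms term-by-term in the Dyson--Phillips series is workable but heavier than necessary, since it forces you into the Fubini and convolution arguments in the weak $\cM_b(E)$-topology that you flag as the main obstacle. A more economical route is purely algebraic and bypasses the perturbed semigroup $S$ entirely: the operator $V(\lambda) \coloneqq R(\lambda)\sum_{k\ge 0}(\hat B R(\lambda))^k$ takes values in $\operatorname{ran} R(\lambda) = D(\hat A)\subset D(\hat B)$, so $\hat B V(\lambda)$ and $(\lambda - \hat A - \hat B) V(\lambda)$ are meaningful, and a telescoping computation gives $(\lambda - \hat A - \hat B)V(\lambda)f = f$ for all $f\in\Bb(E)$; similarly $V(\lambda)(\lambda f - g - \hat B f) = f$ whenever $(f,g)\in\hat A$. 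This exhibits $\lambda - (\hat A + \hat B)$ as bijective from $D(\hat A)$ onto $\Bb(E)$ with inverse $V(\lambda)$, without invoking Theorem \ref{t.k13}, the Dyson--Phillips expansion, or any Laplace-transform interchange. Your route works, but the algebraic one avoids precisely the technical obstacles you single out.
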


\begin{prop}\label{p.resolventconvergence}
Assume Hypothesis \ref{h1} and let $(\hat B_n, D(\hat B_n))$ be an operator such that Hypothesis \ref{h1} is also fulfilled with $\hat B$ replaced by $\hat B_n$ (but the same semigroup $T$). Moreover assume that
\begin{enumerate}
\item $\sup_{n\in \N}\|\hat B_n R(\lambda)\|\to 0$ as $\lambda \to \infty$,
\item for every $h\in \Bb(E)$ and large enough $\lambda$ we have $\hat B_n R(\lambda) h \weak \hat BR(\lambda)h$, and
\item whenever $(h_n)\subset \Bb(E)$ satisfies $h_n \weak 0$, we have $\hat B_nR(\lambda)h_n \weak 0$ for large enough $\lambda$.
\end{enumerate}
In this case, for large enough $\lambda$ we have
\begin{equation}\label{eq.approxtoprove}
(\lambda - (\hat A + \hat B_n ))^{-1} f \weak (\lambda - (\hat A + \hat B))^{-1}f
\end{equation}
for all $f\in \Bb(E)$.
\end{prop}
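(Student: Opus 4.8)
The plan is to exploit Lemma~\ref{l.perturbedresolvent}, which gives for large $\lambda$ the norm-convergent expansions
\[
	(\lambda - (\hat A + \hat B_n))^{-1} = R(\lambda) \sum_{k=0}^\infty (\hat B_n R(\lambda))^k, \qquad
	(\lambda - (\hat A + \hat B))^{-1} = R(\lambda) \sum_{k=0}^\infty (\hat B R(\lambda))^k,
\]
and to pass to the limit term by term. Fix $\lambda$ large enough that $q \coloneqq \sup_{n}\|\hat B_n R(\lambda)\| < 1$ and $\|\hat B R(\lambda)\|<1$; assumption~(i) guarantees such a $\lambda$ exists (and in passing $\|\hat B R(\lambda)\| \le q$ by assumption~(ii) and bp-lower-semicontinuity of the norm). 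Then for any $f \in \Bb(E)$ the tails $\sum_{k \geq N}(\hat B_n R(\lambda))^k f$ and $\sum_{k\geq N}(\hat B R(\lambda))^k f$ are bounded in sup-norm by $\|f\|_\infty q^N/(1-q) \to 0$ uniformly in $n$, so it suffices to show that for each fixed $k$,
\[
	(\hat B_n R(\lambda))^k f \weak (\hat B R(\lambda))^k f \qquad (n \to \infty),
\]
after which applying the $\sigma$-continuous operator $R(\lambda)$ and summing finitely many terms, together with the uniform tail estimate, yields~\eqref{eq.approxtoprove}. (Here one uses that a bp-convergent sequence that is a finite sum of sequences, each of which bp-converges, plus a uniformly small remainder, bp-converges to the sum of the limits — this is routine from the $\eps/3$ argument, using uniform boundedness of all the partial sums.)

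The term-by-term convergence is proved by induction on $k$. For $k=0$ there is nothing to prove; for $k=1$ it is exactly assumption~(ii). For the inductive step, write $g_n \coloneqq (\hat B_n R(\lambda))^{k-1} f$ and $g \coloneqq (\hat B R(\lambda))^{k-1} f$, so by the inductive hypothesis $g_n \weak g$. We must show $\hat B_n R(\lambda) g_n \weak \hat B R(\lambda) g$. Decompose
\[
	\hat B_n R(\lambda) g_n - \hat B R(\lambda) g = \hat B_n R(\lambda)(g_n - g) + \bigl(\hat B_n R(\lambda) g - \hat B R(\lambda) g\bigr).
\]
The second summand bp-converges to $0$ by assumption~(ii) (applied to the fixed function $h = g$). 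For the first summand, note $h_n \coloneqq g_n - g$ satisfies $h_n \weak 0$, and moreover $\sup_n \|h_n\|_\infty < \infty$ since $\|g_n\|_\infty \le q^{k-1}\|f\|_\infty$; hence assumption~(iii) gives $\hat B_n R(\lambda) h_n \weak 0$. Adding the two pieces, and using that the sum of two bp-null sequences is bp-null, we obtain $\hat B_n R(\lambda) g_n \weak \hat B R(\lambda) g$, completing the induction.

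**Main obstacle.**
The delicate point is that bp-convergence (equivalently $\sigma$-convergence) is \emph{not} a topology in which one can naively interchange limits: a $\sigma$-continuous operator maps $\sigma$-convergent sequences to $\sigma$-convergent sequences, but $\sigma$-continuity of each $\hat B_n R(\lambda)$ does not by itself control the joint limit in $n$ — this is precisely why assumption~(iii), an equicontinuity-type hypothesis, is indispensable in the inductive step, and why it must be invoked for the sequence $h_n = g_n - g$ rather than for $g_n$ directly. One must also be careful that all the sequences entering the $\eps/3$ argument for the finite sum are genuinely uniformly bounded in $\|\cdot\|_\infty$ (so that bp-convergence is stable under finite sums and under applying $R(\lambda)$), which is where the uniform geometric bound $\|(\hat B_n R(\lambda))^k\| \le q^k$ from assumption~(i) does the essential work, both for the tail estimate and for controlling the individual partial sums.
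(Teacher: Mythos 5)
Your proposal is correct and follows essentially the same route as the paper: expand both resolvents via Lemma~\ref{l.perturbedresolvent}, control the tails uniformly in $n$ using assumption~(i), and prove term-by-term $\sigma$-convergence of $(\hat B_n R(\lambda))^k f$ by induction, splitting the inductive step into $\hat B_n R(\lambda)(g_n-g)$ (handled by (iii)) and $\hat B_n R(\lambda)g - \hat B R(\lambda)g$ (handled by (ii)), exactly as in the paper's proof.
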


\begin{proof}
By assumption (i), we can pick $\lambda_0$ so large that for $\lambda \geq \lambda_0$ we have $\|\hat B_nR(\lambda)\|\leq 1/2$ for every $n\in \N$ and $\lambda\geq \lambda_0$. By Lemma \ref{l.perturbedresolvent}, we have
\begin{equation}\label{eq.approxresolvent}
(\lambda - (\hat A + \hat B_n))^{-1} = R(\lambda)\sum_{k=0}^\infty (\hat B_n R(\lambda))^k.
\end{equation}
Now let $f\in \Bb(E)$ and $\eps>0$. We fix $N\in \N$ such that $\sum_{k=N+1}^\infty \|(\hat B_n R(\lambda))^kf\| \leq \eps$ for all
$n\in \N$. 

We prove that, for every $k\in \N$, we have $(\hat B_n R(\lambda))^kf \weak (\hat BR(\lambda))^kf$ as $n\to \infty$.

For $k=0$, there is nothing to prove and for $k=1$ this is exactly our assumption (ii). Assume that we know this convergence to be true for some $k\in \N$. We set $g_n \coloneqq (\hat B_n R(\lambda))^k f$ and $g\coloneqq (\hat B R(\lambda))^k f$. Then
\[
(\hat B_n R(\lambda))^{k+1}f - (\hat B R(\lambda))^{k+1}f = \hat B_n R(\lambda)(g_n-g) + (\hat B_nR(\lambda)g - \hat BR(\lambda)g)
\weak 0.
\]
Indeed, by induction hypothesis $g_n-g\weak 0$ so that the first term converges weakly to $0$ by our assumption (iii) and that the second term converges weakly to $0$ follows from the case $k=1$.

Alltogether, we find that $\sum_{k=0}^\infty(\hat B_n R(\lambda))^k f \weak \sum_{k=0}^\infty (\hat B R(\lambda))^kf$. Given that the operator $R(\lambda)$ is $\sigma$-continuous, the representation \eqref{eq.approxresolvent} and Lemma \ref{l.perturbedresolvent} yield
\eqref{eq.approxtoprove}
\end{proof}

If the operator $R(\lambda)$ is positive (which is always the case in our main application) we obtain stronger convergence results.

\begin{cor}\label{c.strongerconvergence}
In the situation of Proposition \ref{p.resolventconvergence}, assume additionally that $R(\lambda)\geq 0$ for large enough $\lambda$. Then the convergence in \eqref{eq.approxtoprove} is not only with respect to $\sigma$, but even uniform on compact subsets of $E$.
\end{cor}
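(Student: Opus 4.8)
The plan is to reduce the statement to a single assertion about the positive operator $R(\lambda)$. Fix $f \in \Bb(E)$ and choose $\lambda$ large enough that the conclusion of Proposition \ref{p.resolventconvergence} holds, that $\|\hat B R(\lambda)\| < 1$ and $\sup_n \|\hat B_n R(\lambda)\| < 1$, and that $R(\lambda) \ge 0$. By Lemma \ref{l.perturbedresolvent}, applied to $\hat B$ and to each $\hat B_n$, we then have $(\lambda - (\hat A + \hat B))^{-1} f = R(\lambda) h$ and $(\lambda - (\hat A + \hat B_n))^{-1} f = R(\lambda) h_n$, where $h \coloneqq \sum_{k \ge 0} (\hat B R(\lambda))^k f$ and $h_n \coloneqq \sum_{k \ge 0} (\hat B_n R(\lambda))^k f$; and the proof of Proposition \ref{p.resolventconvergence} already shows $h_n \weak h$. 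Hence, setting $g_n \coloneqq h_n - h$, one has $g_n \weak 0$ (that is, $C \coloneqq \sup_n \|g_n\|_\infty < \infty$ and $g_n \to 0$ pointwise), and it suffices to prove the following: if $R(\lambda)$ is a positive kernel operator which is moreover strong Feller, then $R(\lambda) g_n \to 0$ uniformly on compact subsets of $E$. That $R(\lambda)$ is strong Feller will be used so that all functions occurring below are continuous; it follows from the strong Feller property of the operators $T(t)$ in Hypothesis \ref{h1} together with dominated convergence in \eqref{eq.resolvent}.

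For the reduced claim I would first use positivity to dispose of signs: $|R(\lambda) g_n| \le R(\lambda) |g_n|$ pointwise, so it is enough to treat the case $0 \le g_n \le C$ with $g_n \to 0$ pointwise and to show $R(\lambda) g_n \to 0$ uniformly on a fixed compact set $K \subset E$. The pointwise convergence $R(\lambda) g_n(x) \to 0$ is immediate from dominated convergence (the kernels $R(\lambda)(x, \cdot)$ being finite positive measures), but to make it uniform I would pass to the decreasing majorant $\psi_n \coloneqq \sup_{m \ge n} g_m$. This $\psi_n$ is measurable (a countable supremum of measurable functions), satisfies $0 \le \psi_{n+1} \le \psi_n \le C$, and $\psi_n \downarrow 0$ pointwise since $g_m \to 0$. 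By positivity, $0 \le R(\lambda) g_n \le R(\lambda) \psi_n$, and $(R(\lambda) \psi_n)_n$ is a pointwise-decreasing sequence of continuous (strong Feller) functions converging pointwise to the continuous function $0$; Dini's theorem then gives $R(\lambda) \psi_n \to 0$ uniformly on $K$. Squeezing, $R(\lambda) g_n \to 0$ uniformly on $K$, and undoing the two reductions yields the corollary.

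I expect no genuine obstacle here; the one point that needs care is that Dini's theorem cannot be applied to the sequence $(R(\lambda) g_n)_n$ itself, which need not be monotone — hence the detour through the decreasing envelope $\psi_n$, which is exactly where positivity of $R(\lambda)$ is used, both to compare $R(\lambda) g_n \le R(\lambda) \psi_n$ and to obtain monotonicity of $(R(\lambda)\psi_n)_n$. One should also verify that $\psi_n$ is measurable, which it is as a countable supremum, and that $R(\lambda)$ indeed maps $\Bb(E)$ into $\Cb(E)$; the latter is the only place the strong Feller hypothesis on $T$ enters, and without it even the statement ``uniform on compact sets'' would be meaningless. Everything else is dominated convergence and positivity of the kernels.
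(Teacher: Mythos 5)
Your proof is correct and is essentially the same as the paper's: you take the Neumann series representation from Lemma~\ref{l.perturbedresolvent}, reduce to the tail $g_n = h_n - h \weak 0$, replace it by the decreasing envelope $\psi_n = \sup_{m\ge n}|g_m|$, and apply positivity of $R(\lambda)$ plus Dini's theorem. The one thing you add is making explicit that Dini requires continuity of $R(\lambda)\psi_n$, which comes from the strong Feller property in Hypothesis~\ref{h1}; the paper takes this for granted.
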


\begin{proof}
Fix $f\in \Bb(E)$ and put $g_n\coloneqq \sum_{k=0}^\infty (\hat B_n R(\lambda))^kf$ and $g \coloneqq \sum_{k=0}^\infty (\hat B R(\lambda))^kf$. Fixing $m\in \N$, we put $h_m \coloneqq \sup_{n\geq m} |g_n-g|$. We have seen in the proof of Proposition \ref{p.resolventconvergence} that $g_n \weak g$ and, consequently, $h_m \downarrow 0$ pointwise.

As $R(\lambda)$ is positive, we find for $n\geq m$
\begin{align*}
\big| (\lambda - (\hat A+\hat B_n))^{-1}f - (\lambda - (\hat A +\hat B))^{-1}f\big| & = |R(\lambda)g_n - R(\lambda)g|\\
& \leq R(\lambda)|g_n-g|\\
& \leq R(\lambda) h_m.
\end{align*}
The latter converges to $0$ as $m$, hence $n$, tends to $\infty$. However, as $R(\lambda)$ is positive this is even a monotone convergence and thus the convergence is uniform on compact subsets of $E$ by Dini's theorem.
\end{proof}

It is natural to ask whether the perturbed semigroups also converge. In general, weak convergence of the resolvents does not imply weak convergence of the corresponding semigroups, see \cite{eisner} for a concrete example. In Section~\ref{mp}, we will use the theory of martingale problems to establish a convergence result for semigroups associated with L\'evy-type operators.

We can also prove that certain features of the pseudoresolvents are stable under the convergence described in Proposition \ref{p.resolventconvergence}.

\begin{cor}\label{c.stability}
In the situation of Proposition \ref{p.resolventconvergence}, the following holds true.
\begin{enumerate}
\item\label{c.stability.i} Let $\lambda \in \C$ with $\Re \lambda>0$ be in the resolvent set of $\hat{A}+\hat{B}_n$ and assume that $\|(\lambda - (\hat A + \hat B_n))^{-1}\| \leq (\Re\lambda)^{-1}$ for all $n\in \N$. Then $\lambda$ is in the resolvent set of $\hat A + \hat B$ and
$\|(\lambda - (\hat A + \hat B))^{-1}\| \leq (\Re\lambda)^{-1}$. In this case, the semigroup $S$, generated by $\hat A + \hat B$, is contractive.
\item\label{c.stability.ii} Suppose that for $\lambda \geq \lambda_0$ we have $(\lambda + (\hat A + \hat B_n))^{-1}\geq 0$ for all $n\in \N$. Then also
$(\lambda + (\hat A + \hat B))^{-1} \geq 0$ for $\lambda \geq \lambda_0$. In this case, the semigroup $S$ generated by $\hat A + \hat B$ is positive.
\end{enumerate}
\end{cor}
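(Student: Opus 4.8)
The plan is to deduce both statements from the convergence result in Proposition \ref{p.resolventconvergence}, using that the properties in question -- a norm bound on a single resolvent operator, respectively positivity -- are stable under $\sigma$-convergence. For part \eqref{c.stability.i}, I would first note that $\lambda$ with $\Re\lambda >0$ automatically lies in $\rho(\hat A + \hat B)$ once we know the pseudoresolvent is injective with the stated range there; but the cleaner route is: for $\lambda$ large enough the representation \eqref{eq.approxresolvent} already gives a well-defined operator $(\lambda - (\hat A + \hat B))^{-1}$, and by Proposition \ref{p.resolventconvergence} we have $(\lambda - (\hat A + \hat B_n))^{-1} f \weak (\lambda - (\hat A + \hat B))^{-1} f$ for all $f \in \Bb(E)$. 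Since $\|(\lambda - (\hat A + \hat B_n))^{-1} f\|_\infty \leq (\Re\lambda)^{-1}\|f\|_\infty$ for every $n$ and $\sigma$-convergent sequences are bp-convergent, taking pointwise limits preserves the bound: $|(\lambda - (\hat A + \hat B))^{-1}f(x)| = \lim_n |(\lambda - (\hat A + \hat B_n))^{-1}f(x)| \leq (\Re\lambda)^{-1}\|f\|_\infty$ for every $x$, hence $\|(\lambda - (\hat A + \hat B))^{-1}\| \leq (\Re\lambda)^{-1}$. This establishes the bound for all large $\lambda$; the Hille--Yosida-type bound then shows $\hat A + \hat B$ generates a contraction semigroup $S$, and since a contraction semigroup has $(\Re\lambda)^{-1}$ as a bound on its resolvent for every $\Re\lambda >0$, the original $\lambda$ is in the resolvent set with the claimed estimate.

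For part \eqref{c.stability.ii}, the argument is the same but even simpler, and this is where Corollary \ref{c.strongerconvergence} is useful: positivity of $(\lambda + (\hat A + \hat B_n))^{-1}$ for all $n$ and all $\lambda \geq \lambda_0$ means $(\lambda + (\hat A + \hat B_n))^{-1}f \geq 0$ whenever $f \geq 0$; passing to the pointwise (indeed locally uniform, by Corollary \ref{c.strongerconvergence}) limit preserves $\geq 0$, so $(\lambda + (\hat A + \hat B))^{-1} \geq 0$ for $\lambda \geq \lambda_0$. Positivity of the resolvent on a half-line of the real axis, together with the exponential boundedness already built into the setup, then yields positivity of the semigroup $S$ via the exponential-formula representation $S(t) = \lim_{n\to\infty}(\tfrac{n}{t})^n (\tfrac{n}{t} - (\hat A + \hat B))^{-n}$ (or, at the level of the pseudoresolvent, by the Post--Widder inversion for integrable semigroups, cf.\ \cite{k11}).

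I expect the only genuine subtlety to be bookkeeping about which $\lambda$ the statements refer to: Proposition \ref{p.resolventconvergence} and Lemma \ref{l.perturbedresolvent} only give the series representation and the convergence for \emph{sufficiently large} $\lambda$, whereas part \eqref{c.stability.i} asserts the conclusion for a \emph{given} $\lambda$ with merely $\Re\lambda >0$. The way around this is exactly the two-step structure above: first upgrade the uniform bound from the large-$\lambda$ regime to the generation statement (contractivity of $S$), and only then use that $S$ contractive forces the resolvent bound on the whole right half-plane, recovering the specified $\lambda$. For \eqref{c.stability.ii} the analogous point is that positivity of the resolvent on $[\lambda_0,\infty)$ already suffices to conclude positivity of $S$, so no extension to other $\lambda$ is needed. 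Everything else is a routine passage to the limit in pointwise inequalities, which is legitimate precisely because $\sigma$-convergence of operators applied to a fixed function is bp-convergence of the resulting functions.
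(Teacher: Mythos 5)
Your proof is correct, and part~\eqref{c.stability.ii} is essentially identical to the paper's. Part~\eqref{c.stability.i}, however, takes a genuinely different route. The paper's proof goes through complex analysis: fixing $f$ and $\mu$, it applies Vitali's theorem to the scalar holomorphic functions $\lambda\mapsto\langle F_n(\lambda)f,\mu\rangle$ (uniformly bounded by hypothesis on $\{\Re\lambda>0\}$) to obtain locally uniform convergence on the whole half-plane, and then invokes a vector-valued analytic extension theorem to conclude that $F(\lambda)=(\lambda-(\hat A+\hat B))^{-1}$ extends analytically to $\{\Re\lambda>0\}$; only afterwards does it derive the norm bound and then contractivity from Post--Widder. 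You instead dispense with Vitali entirely: you first pass to the limit at large real $\lambda$ to get the resolvent estimate there, establish contractivity of $S$ from these large-$\lambda$ bounds, and then read off the full half-plane resolvent estimate from the Laplace-transform representation of a contraction (integrable) semigroup. This is somewhat more elementary, and it is exactly the ``two-step structure'' you describe at the end; the paper's argument has the extra byproduct of locally uniform convergence $F_n\to F$ on $\{\Re\lambda>0\}$, which however the corollary does not claim.

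One phrasing worth tightening: you say ``the Hille--Yosida-type bound then shows $\hat A+\hat B$ generates a contraction semigroup.'' The contraction form of Hille--Yosida needs $\|\lambda R(\lambda)\|\le1$ for \emph{all} $\lambda>0$, whereas at that point you only have it for large $\lambda$. What actually closes this step is the Post--Widder/exponential formula, which you do invoke explicitly in part~\eqref{c.stability.ii} and which the paper uses as well: since $S(t)$ is recovered from $\big(\tfrac{n}{t}\big)^n F\big(\tfrac{n}{t}\big)^n$ as $n\to\infty$, only the resolvent at the large values $\lambda=n/t$ enters, so the large-$\lambda$ estimate suffices. With that substitution (``Hille--Yosida-type'' $\to$ ``Post--Widder / exponential formula''), the argument is complete. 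Also note that testing $\sigma$-convergence against $\delta_x$ to get pointwise convergence, as you do, is the same as the paper's testing against general $\mu\in\cM_b(E)$, so that step is fine.
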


\begin{proof}
We put $F_n(\lambda) \coloneqq (\lambda - (\hat A + \hat B_n))^{-1}$ and $F(\lambda) \coloneqq (\lambda - (\hat A + \hat B))^{-1}$
whenever these are defined.

\ref{c.stability.i} The sequence $F_n$ is an (operator-valued) holomorphic function that is, by assumption, locally bounded on $\{\Re \lambda > 0\}$.
Fixing $f\in \Bb(E)$ and $\mu \in \cM_b(E)$, the same is true for the scalar function $\varphi_n: \lambda \mapsto  \varphi_n(\lambda) \coloneqq \langle F_n(\lambda)f, \mu\rangle$. By Proposition \ref{p.resolventconvergence}, for real $\lambda$ that are large enough, $\varphi_n(\lambda)$ converges to $\varphi(\lambda) \coloneqq\langle F(\lambda)f, \mu\rangle$. It follows from Vitali's theorem (see \cite[Thm.\ 2.1]{an00}) that $\varphi$ has a holomorphic extension to the set $\{\Re\lambda >0\}$ and $\varphi_n$ converges locally uniformly to $\varphi$. As $f$ and $\mu$ are arbitrary, it follows from a vector-valued analytic extension theorem \cite[Thm.\ 3.5]{an00} that $F$ has an analytic continuation to $\{\Re  \lambda  >0\}$ which proves $\{\Re\lambda >0\}$ is contained in the resolvent set of $\hat{A}+\hat{B}$. Moreover
\[
|\langle F(\lambda) f, \mu \rangle| = \lim_{n\to \infty} |\langle F_n(\lambda) f, \mu \rangle| \leq \limsup_{n\to \infty} (\Re\lambda)^{-1}\|f\|\|\mu\|
\]
for every $\Re\lambda >0$. This implies the estimate for the Laplace transform. As for the contractivity of the semigroup, we note that for
$f\in \Cb(E)$ and $\cM_b(E)$ the orbit $t\mapsto \langle S(t)f, \mu \rangle$ is continuous whence the (scalar) Post--Widder inversion formula (see \cite[Thm.\ 1.7.7]{abhn}) yields
\[
\langle S(t)f, \mu \rangle = \lim_{n\to\infty} \Big\langle \Big(\frac{n}{t}\Big)^n \Big(\frac{n}{t} - \hat A - \hat B\Big)^{-n}f, \mu \Big\rangle.
\]
By the resolvent estimate, the absolute value of the right-hand side is at most $\|f\|\|\mu\|$ and it follows that
$S$ is contractive on $\Cb(E)$. But then it is also contractive on $\Bb(E)$.
\smallskip

\ref{c.stability.ii} For $0\leq f\in \Bb(E)$, $0\leq \mu \in \mathcal{M}(E)$ and $\lambda \geq 0$, we have
\[
\langle F(\lambda)f, \mu\rangle  = \lim_{n\to \infty} \langle F_n(\lambda) f, \mu \rangle \geq 0
\]
as for $\lambda \geq \lambda_0$ we have $F_n(\lambda) \geq 0$ for all $n\in N$. As $f$ and $\mu$ were arbitrary, this proves $F(\lambda)\geq 0$ for $\lambda \geq \lambda_0$. As above, we can infer positivity of the semigroup $S$ from this by means of the Post--Widder inversion formula.
\end{proof}

\section{Perturbation of strong Feller semigroups by L\'evy-type operators} \label{perLTO}

In this section, we work in the Euclidean setting, $E=\R^d$, and consider perturbations of strong Feller semigroups by a class of integro-differential operators. To that end, we will make more concrete assumptions on our initial semigroup $T$ which are tailor-made for this situation.

\begin{hyp}\label{h1b}
Assume that $T=(T(t))_{t\geq 0}$ is a $\Cb$-semigroup of type $(M, \omega)$ with state space $E=\R^d$ that consists of strong Feller operators. Moreover, the following conditions hold for some $\rho>0$:
\begin{enumerate}
[(i)]
\item $T(t)f \in \Cb^\rho(\R^d)$ for all $t>0$ and $f\in \Bb(\R^d)$;
\item There is a function $\varphi \in L^1(0,1)$ such that
\[
\|T(t)f\|_{\Cb^\rho} \leq \varphi (t)\|f\|_\infty
\]
for all $t \in (0,1)$ and $f\in \Bb(\R^d)$;
\end{enumerate}
\end{hyp}

As before, we will denote the Laplace transform of our semigroup $T$ by $(R(\lambda))_{\Re\lambda >\omega}$ and the full generator of $T$ by $\hat A$. For future reference, we note the following consequence of Hypothesis \ref{h1b}.

\begin{lem}\label{l.hoelderdomain}
Assume Hypothesis \ref{h1b}. Then, for every $\lambda \in \C$ with $\Re \lambda>\omega$, we have $R(\lambda )f \in \Cb^{\rho}(\R^d)$ for all $f\in \Bb(\R^d)$ and 
\[
\|R(\lambda)f\|_{\Cb^{\rho}} \leq C(\lambda) \|f\|_\infty, \quad f\in \Bb(\R^d),
\]
for a constant $C(\lambda)$ with $C(\lambda) \to 0$ as $\Re\lambda \to \infty$. In particular, $D(\hat A) \subset \Cb^{\rho}(\R^d).$
\end{lem}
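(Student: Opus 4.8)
The plan is to write $R(\lambda)$ as the Laplace transform of $T$ and to bound the Hölder--Zygmund norm of $T(t)f$ for every $t>0$ --- using Hypothesis~\ref{h1b}(ii) directly for $t\in(0,1)$ and the semigroup property for $t\ge 1$ --- and then to integrate in $t$. First I would record that, for $\Re\lambda>\omega$, testing the identity \eqref{eq.resolvent} against the Dirac measure $\delta_x\in\cM_b(\R^d)$ yields the pointwise representation
\[
R(\lambda)f(x)=\int_0^\infty e^{-\lambda t}\,T(t)f(x)\,dt,\qquad x\in\R^d,
\]
where the scalar integral converges absolutely since $|T(t)f(x)|\le\|T(t)\|\,\|f\|_\infty\le Me^{\omega t}\|f\|_\infty$.

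Next, the norm bound on $T(t)f$. By Hypothesis~\ref{h1b}(ii), $\|T(t)f\|_{\Cb^\rho}\le\varphi(t)\|f\|_\infty$ for $t\in(0,1)$. Since $\varphi\in L^1(0,1)$ is finite almost everywhere, fix some $s_0\in(0,1)$ with $\varphi(s_0)<\infty$; for $t\ge 1$ write $T(t)f=T(s_0)\bigl(T(t-s_0)f\bigr)$ and combine Hypothesis~\ref{h1b}(ii) with the exponential bound to obtain $\|T(t)f\|_{\Cb^\rho}\le M\varphi(s_0)e^{\omega(t-s_0)}\|f\|_\infty$. Putting $\psi(t)\coloneqq\varphi(t)\one_{(0,1)}(t)+M\varphi(s_0)e^{\omega(t-s_0)}\one_{[1,\infty)}(t)$, we then have $\|T(t)f\|_{\Cb^\rho}\le\psi(t)\|f\|_\infty$, hence also $\|T(t)f\|_\infty\le\psi(t)\|f\|_\infty$, for all $t>0$. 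For $\Re\lambda>\omega$ the quantity $\widetilde C(\lambda)\coloneqq\int_0^\infty e^{-(\Re\lambda)t}\psi(t)\,dt$ is finite, and $\widetilde C(\lambda)\to 0$ as $\Re\lambda\to\infty$: the contribution of $(0,1)$ vanishes by dominated convergence with dominating function $\varphi$, while the contribution of $[1,\infty)$ is computed explicitly.

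It remains to transfer the Hölder regularity from $T(t)f$ to $R(\lambda)f$. For this I would use the classical description of the Hölder--Zygmund space $\Cb^\rho(\R^d)$: fixing an integer $M>\rho$ and writing $\Delta_h g(x)\coloneqq g(x+h)-g(x)$ for the difference operator, the norm of $\Cb^\rho(\R^d)$ is equivalent to $g\mapsto\|g\|_\infty+\sup_{0<|h|\le 1}|h|^{-\rho}\|\Delta_h^M g\|_\infty$. Since $\Delta_h^M g(x)$ is a fixed finite linear combination of the values $g(x+jh)$, $0\le j\le M$, the pointwise representation above gives $\Delta_h^M[R(\lambda)f](x)=\int_0^\infty e^{-\lambda t}\,\Delta_h^M[T(t)f](x)\,dt$ without any appeal to Fubini. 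Consequently, for $0<|h|\le 1$,
\[
\bigl|\Delta_h^M[R(\lambda)f](x)\bigr|\le\int_0^\infty e^{-(\Re\lambda)t}\bigl\|\Delta_h^M[T(t)f]\bigr\|_\infty\,dt\le c\,|h|^\rho\,\|f\|_\infty\,\widetilde C(\lambda),
\]
and similarly $\|R(\lambda)f\|_\infty\le\widetilde C(\lambda)\|f\|_\infty$; by the norm equivalence this gives $R(\lambda)f\in\Cb^\rho(\R^d)$ with $\|R(\lambda)f\|_{\Cb^\rho}\le C(\lambda)\|f\|_\infty$ and $C(\lambda)\to 0$ as $\Re\lambda\to\infty$. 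The final assertion is then immediate: if $(u,g)\in\hat A$ and $\Re\lambda>\omega$, then $u=R(\lambda)(\lambda u-g)$ with $\lambda u-g\in\Bb(\R^d)$, so $u\in\Cb^\rho(\R^d)$.

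I expect the only genuinely delicate point to be this last transfer step: one cannot simply invoke Bochner integration in $\Cb^\rho(\R^d)$, since that space is not separable and $t\mapsto T(t)f$ is a priori only known to be bounded --- not continuous --- in $\Cb^\rho$. Routing everything through the purely pointwise finite-difference seminorms, which commute with the scalar Laplace integral, is what makes the argument go through cleanly; everything else (the splitting at $t=1$, the semigroup trick for $t\ge1$, the two limits $\widetilde C(\lambda)\to0$) is routine.
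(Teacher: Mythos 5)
Your proof is correct, and the decisive technical step is handled differently from the paper's. After the same preliminary reductions (pointwise Laplace representation of $R(\lambda)f$ via $\delta_x$, splitting the time integral at $t=1$ and using the semigroup law plus exponential boundedness to extend the regularity estimate to all $t>0$, dominated convergence for $\widetilde C(\lambda)\to0$), the paper transfers the $\Cb^\rho$-regularity from $T(t)f$ to $R(\lambda)f$ by a case distinction: for $\rho\in(0,1)$ it estimates $|R(\lambda)f(x)-R(\lambda)f(z)|$ directly, and for $\rho\geq1$ it invokes the differentiation lemma for parameter-dependent integrals to differentiate under the integral sign and then repeats the Hölder estimate for the derivative. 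You instead route everything through the finite-difference characterization of the Hölder--Zygmund norm: since $\Delta_h^M$ is a fixed finite linear combination of translates, it commutes with the scalar Laplace integral with no appeal to Fubini or differentiation, and a single estimate covers all $\rho$. Your approach buys a cleaner, case-free argument and, as you note, sidesteps the genuine obstacle that one cannot Bochner-integrate in the non-separable space $\Cb^\rho$; what it costs is having to invoke the equivalence of the finite-difference seminorm with the $\Cb^\rho$-norm (harmless here, since the paper's use of real interpolation between $\Cb$ and $\Cb^2$ shows it works with Hölder--Zygmund spaces anyway). One minor point in your favor: you fix $s_0\in(0,1)$ with $\varphi(s_0)<\infty$ before applying the semigroup trick, whereas the paper writes $\varphi(1)$, which is not literally defined by Hypothesis~\ref{h1b}(ii) since $\varphi$ is only specified on $(0,1)$; your version is the careful reading of what the paper means.
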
 

\begin{proof}
	For $r>0$ it follows from the exponential boundedness of the semigroup and Hypothesis~\ref{h1b}(ii)  that
	\[
	\|T(1+r)f\|_{\Cb^\rho} \leq \|T(1)\|_{\cL(\Cb^\rho, \Bb)} \|T(r)\| \, \|f\|_\infty \leq M\varphi(1)e^{\omega r} \|f\|_\infty.
	\]
	Setting $\psi(t) = \varphi (t)$ for $t\in (0,1)$ and  $\psi (t) = M\varphi(1) e^{\omega (t-1)}$ for $t \geq 1$, we get
	\begin{equation}
		\|T(t) f\|_{\Cb^{\rho}} \leq \psi(t) \|f\|_{\infty}, \quad t >0, \; f \in \Bb(\R^d).
		\label{eq-reg.sg.alltimes}
	\end{equation}
	Now fix $f \in \Bb(\R^d)$ and $x \in \R^d$. Since $\|T(t)\| \leq M e^{\omega t}$ and $t \mapsto T(t) f(x)$ is Borel measurable, the integral $R(\lambda) f(x) = \int_{(0,\infty)} e^{-\lambda t} T(t) f(x) \, dt$ exists as a Lebesgue integral for any $\lambda$ with $\Re \lambda>\omega$. If $\rho \in (0,1)$, then it is immediate from \eqref{eq-reg.sg.alltimes} and the estimate
	\begin{equation*}
		|R(\lambda)f(x) - R(\lambda)f(z)|
		\leq \int_0^\infty e^{-\lambda t} |T(t)f(x)-T(t)f(z)| \, dt
	\end{equation*}
	that $\|R(\lambda) f\|_{\Cb^{\rho}} \leq C(\lambda) \|f\|_{\infty}$, where $C(\lambda):= \int_0^\infty \psi(t) e^{-\lambda t} \, dt$. If $\rho \geq 1$, then the differentiation lemma for parameter-depend integrals, see e.g.\ \cite[Thm.\ 12.5]{mims} or \cite[Prop.\ A.1]{euler-maruyama}, implies that $x \mapsto R(\lambda) f(x)$ is differentiable and
	\begin{equation*}
		\frac{\partial}{\partial x_i} R(\lambda) f(x) = \int_0^\infty e^{-\lambda t} \frac{\partial}{\partial x_i} T(t)f(x) \, dt
	\end{equation*}
	for all $\Re \lambda>\omega$. Thus, by \eqref{eq-reg.sg.alltimes}, $\|R(\lambda) f\|_{\Cb^{\rho}} \leq C(\lambda) \|f\|_{\infty}$ with $C(\lambda)$ as before. By dominated convergence, $C(\lambda) \to 0$ as $\Re \lambda \to \infty$.
\end{proof}

We now introduce the integro-differential operator $\hat B$ that we will consider as a perturbation. We fix a function $\chi$ such that $\one_{B(0,1)}\leq \chi\leq \one_{B(0,2)}$ and put
\begin{equation}
	\hat{B}f(x) = b(x) \cdot \nabla f(x) + \int_{\R^d \setminus\{0\}} (f(x+y)-f(x)-y \cdot \nabla f(x) \chi(y)) \, \mu(x,dy).
	\label{eq-intdiff}
\end{equation}

We make the following standing assumption.

\begin{hyp}\label{h2} 
The function $b: \R^d \to \R^d$ and the kernel $\mu : \R^d\times \cB(\R^d\setminus\{0\}) \to [0, \infty]$ satisfy

\begin{enumerate}
[(i)]
	\item $b$ is Borel measurable and bounded;
	\item $x \mapsto \int_{\R^d \setminus \{0\}} f(y) \, \mu(x,dy)$ is Borel measurable for every $f \in \Cc(\R^d \setminus \{0\})$. 
	\item There is a constant $\beta \in (0,2)$ such that
\begin{equation}
	\|\mu\|_{\beta} := \sup_{x \in \R^d} \left( \int_{\R^d \setminus \{0\}} \min\{|y|^{\beta},1\} \, \mu(x,dy)  \right)<\infty.
	\label{eq-kernel.integrability}
\end{equation}
Moreover, $\beta$ is strictly smaller than the constant $\rho$ from Hypothesis~\ref{h1b}.
\item If $\rho \leq 1$, then the \emph{compensated drift} $b(\cdot) - \int_{\R^d\setminus \{0\}} y\chi(y)\, \mu (\cdot, dy)$ is identically zero.
\end{enumerate}
Occasionally we will additionally assume the following \emph{tightness assumption}
\begin{equation}\tag{{\sf Ti}} \label{e.tightness}
\sup_{x\in \R^d} \mu (x, \{|y|>R\}) \to 0 \quad \mbox{as}\quad R\to \infty.
\end{equation}
\end{hyp} 

Let us comment briefly on these assumptions. Assumption (iii) implies that the above operator $\hat B$ is well-defined on $\Cb^\rho(\R^d)$. To see this, let us first assume that $\rho>1$. Then clearly the local part $b(\cdot) \nabla f(\cdot)$ of $\hat B$ is well-defined on this space. As for the integral part,  the elementary estimate
\begin{equation*}
	|f(x+y)-f(x)- y \cdot \nabla f(x) \chi(|y|)| \leq 2\|f\|_{\Cb^{\rho}(\R^d)} \min\{1,|y|^{\rho}\}
\end{equation*} 
implies that $\|\hat{B}f\|_{\infty} \leq 2\|\mu\|_{\beta} \|f\|_{\Cb^{\rho}}$. In the case $\rho\leq 1$, the additional assumption (iv) entails that the operator $\hat B$ simplifies to 
\[
\hat B f(x) = \int_{\R^d\setminus\{0\}} (f(x+y) - f(x))\, \mu (x, dy)
\]
and we can argue similarly, making use of the H\"older continuity of $f$. The assumption $\beta<\rho$ is important to ensure compatibility (in the sense of Hypothesis \ref{h1}) of the perturbation $\hat B$ with the semigroup $T$ satisfying Hypothesis \ref{h1b}. The  tightness condition \eqref{e.tightness} plays an important role in proving that the perturbed semigroup leaves the space $\Co(\R^d)$ invariant. We note that \eqref{e.tightness} holds if the real part of the symbol 
\begin{equation*}
	p(x,\xi) := -ib(x) \cdot \xi + \int_{\R^d \setminus \{0\}} (1-e^{iy \cdot \xi} + iy \cdot \xi \chi(|y|)) \, \mu(x,dy)
\end{equation*}
of the operator $\hat{B}$ is equicontinuous at $\xi=0$, i.e.
\begin{equation}
	\lim_{|\xi| \to 0} \sup_{x \in \R^d} |\Re p(x,\xi)|=0,
\end{equation}
see \cite[(proof of) Thm.~4.4]{rs98}. 

We can now formulate the main result of this section.

\begin{thm} \label{t.perturbeuclidean}
Assume Hypotheses \ref{h1b} and \ref{h2}. Then the following hold true.
\begin{enumerate}[(a)]
\item The operator $\hat A + \hat B$ is the full generator of a $\Cb$-semigroup $S= (S(t))_{t\geq 0}$ that consists of strong Feller operators.
\item If $T$ consists of sub Markovian operators then so does $S$.
\item Assume additionally the tightness assumption \eqref{e.tightness},  that $T$ is a positive $\Co$-semigroup and that the test functions $\Cc^\infty(\R^d)$ form a core for the generator $A$ on $\Co(\R^d)$. Then $S$ is a $\Co$-semigroup. 
\end{enumerate}
\end{thm}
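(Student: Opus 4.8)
The plan is to verify the three hypotheses $\ref{h1}$(i)--(iv) for the pair $(T, \hat B)$, apply Theorems $\ref{t.k13}$ and $\ref{t.perturbFeller}$, and then handle sub Markovianity via the approximation machinery of Section~$\ref{resConv}$. For part (a): Hypothesis $\ref{h1b}$(i) together with Lemma $\ref{l.hoelderdomain}$ tells us that $T(t)$ and $R(\lambda)$ map $\Bb(\R^d)$ into $\Cb^\rho(\R^d)$, and the discussion following Hypothesis $\ref{h2}$ shows $\hat B$ is bounded from $\Cb^\rho(\R^d)$ to $\Bb(\R^d)$ with norm controlled by $\|\mu\|_\beta$ (and $\|b\|_\infty$). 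Hence $\hat B T(t)$ and $\hat B R(\lambda)$ are bounded operators on $\Bb(\R^d)$; the point requiring real work is that they are $\sigma$-continuous (i.e.\ kernel operators) and that $\|\hat B T(t)\| \le \varphi(t)$ with $\varphi$ integrable near $0$. For the norm bound one uses $\|\hat B T(t)f\|_\infty \le C\|T(t)f\|_{\Cb^\rho} \le C\psi(t)\|f\|_\infty$; since $\beta < \rho$ one actually needs a sharper estimate splitting the integral part of $\hat B$ at scale depending on $t$, exploiting that $T(t)f$ is not merely $\Cb^\rho$ but its lower-order H\"older seminorms blow up more slowly, to get an integrable singularity — this is where $\beta<\rho$ is used and is the technical heart of part (a). The $\sigma$-continuity of $\hat B T(t)$ and $\hat B R(\lambda)$ should follow because these operators are built from the strong Feller (hence kernel) operator $T(t)$ by applying the local operator $\hat B$, which preserves the kernel structure (differentiation and integration against $\mu(x,dy)$ both act fibrewise in $x$ through kernels); measurability of $(t,x)\mapsto \hat B T(t)f(x)$ comes from Hypothesis $\ref{h1b}$ and Fubini. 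Once Hypothesis $\ref{h1}$ is checked, Theorem $\ref{t.k13}$ gives (a) immediately, with $S$ a $\Cb$-semigroup of strong Feller operators satisfying the Duhamel formula and Dyson--Phillips expansion.

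For part (c), under the tightness assumption $\eqref{e.tightness}$ and the hypotheses that $T$ is a positive $\Co$-semigroup with $\Cc^\infty(\R^d)$ a core for $A$, I would invoke Theorem $\ref{t.perturbFeller}$(b): it suffices to show $\hat B T(t)\,\Bo(\R^d)\subset \Bo(\R^d)$ for every $t>0$. By Lemma $\ref{l.b0}$ this reduces to showing $k_t(\cdot, K) \in \Bo(\R^d)$ for the kernel $k_t$ associated with $\hat B T(t)$ and every relatively compact $K$ — equivalently that $\hat B T(t)\one_K$ vanishes at infinity. Writing $g = T(t)\one_K$, which is $\Cb^\rho$ and, since $T$ is $\Co$ and strong Feller, lies in $\Co(\R^d)$ with $\nabla g$ controlled, one estimates: the local part $b(x)\cdot\nabla g(x)$ is small for large $|x|$ because $\nabla g$ decays (Lemma $\ref{l.c0}$ applied to derivatives, or a direct Hölder interpolation argument); for the integral part $\int (g(x+y)-g(x)-y\cdot\nabla g(x)\chi(y))\,\mu(x,dy)$, one splits at $|y| \le R$ and $|y|>R$. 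On $|y|\le R$ the integrand is bounded by $2\|g\|_{\Cb^\rho}\min\{1,|y|^\rho\}$ and, crucially, $|g(x+y)-g(x)-\ldots|$ is small uniformly in $|y|\le R$ when $x$ is large since $g$ vanishes at infinity together with $\nabla g$; on $|y|>R$ one bounds the integrand by $2\|g\|_\infty \mu(x,\{|y|>R\})$ which is small uniformly in $x$ by $\eqref{e.tightness}$. Choosing $R$ large first, then $|x|$ large, gives the claim. Then Theorem $\ref{t.perturbFeller}$(b) yields that $S$ leaves $\Co(\R^d)$ invariant and is strongly continuous there, i.e.\ $S$ is a Feller semigroup; the core hypothesis enters because Theorem $\ref{t.perturbFeller}$ is applied with a bp-core coming from the $\Cc^\infty$-core via Lemma $\ref{l.bp}$.

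The genuinely delicate part is (b), sub Markovianity, and this is where the convergence results of Section~$\ref{resConv}$ come in: one cannot use the positive maximum principle since $\hat B$ has discontinuous coefficients and we work on $\Bb(\R^d)$. The strategy is to approximate $\hat B$ by a sequence $\hat B_n$ with \emph{continuous} (even smooth) coefficients $b_n, \mu_n$ — e.g.\ by mollification in $x$ — chosen so that $\|b_n\|_\infty \le \|b\|_\infty$, $\|\mu_n\|_\beta \le \|\mu\|_\beta$ (mollification does not increase these sup-norm-type quantities), so that each $\hat B_n$ satisfies Hypothesis $\ref{h2}$ with the same constants, and so that the three convergence conditions (i)--(iii) of Proposition $\ref{p.resolventconvergence}$ hold. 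Condition (i) is uniform in $n$ because the norm bounds $\|\hat B_n R(\lambda)\| \le C(\lambda)\|\mu_n\|_\beta$ (plus drift term) are uniform and $C(\lambda)\to 0$; conditions (ii) and (iii) follow from the fact that $b_n\to b$ and $\mu_n\to\mu$ in an appropriate weak sense (pointwise a.e.\ in $x$, vaguely in $y$) combined with dominated convergence, using that $R(\lambda)f \in \Cb^\rho$ so the arguments of $\hat B_n$ are smooth enough to pass to the limit — this is the main obstacle, requiring care in how the mollification interacts with the $y$-integration and the measurability. Now for each $n$, $\hat B_n$ has continuous coefficients, so $\hat A + \hat B_n$ acts on $\Co(\R^d)$-type spaces and the classical positive maximum principle (or the known perturbation theory for Feller generators, since $\hat B_n$ maps $D(A)$ into $\Co(\R^d)$) shows that $\hat A + \hat B_n$ generates a sub Markovian semigroup $S_n$; equivalently $(\lambda - (\hat A + \hat B_n))^{-1} \ge 0$ and $\|(\lambda - (\hat A+\hat B_n))^{-1}\| \le \lambda^{-1}$ for large real $\lambda$. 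By Corollary $\ref{c.stability}$(i) and (ii), these properties pass to the limit: $(\lambda - (\hat A + \hat B))^{-1}\ge 0$ and is a contraction, so the generated semigroup $S$ is both positive and contractive, hence sub Markovian. One final wrinkle: the positivity/contractivity constants in Corollary $\ref{c.stability}$ are stated for $\Re\lambda>0$ resp.\ $\lambda\ge\lambda_0$, and one must check the hypothesis $\|(\lambda-(\hat A+\hat B_n))^{-1}\|\le(\Re\lambda)^{-1}$ holds on the full right half-plane for the sub Markovian $S_n$, which is standard since sub Markovian semigroups are contractive.
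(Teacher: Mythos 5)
Your overall plan (verify Hypothesis~\ref{h1}, apply Theorem~\ref{t.k13} for (a), Theorem~\ref{t.perturbFeller} for (c), and the approximation machinery of Section~\ref{resConv} for (b)) is the same as the paper's, but there are inaccuracies in (a) and genuine gaps in (b) and (c).

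\textbf{Part (a).} You claim that $\beta<\rho$ forces a ``sharper estimate splitting the integral part at scale depending on $t$'' and that this is ``the technical heart'' of (a). That is not so: the chain $\|\hat BT(t)f\|_\infty \leq \|\hat B\|_{\cL(\Cb^\rho,\Bb)}\|T(t)f\|_{\Cb^\rho} \leq C\varphi(t)\|f\|_\infty$ settles Hypothesis~\ref{h1}(iv) immediately, since $\min\{1,|y|^\rho\}\leq\min\{1,|y|^\beta\}$ for $\beta\leq\rho$. The strict inequality $\beta<\rho$ plays no role in part (a) at all; it enters only later, in Lemma~\ref{l.resolventconvergence.intdiff} (where one needs $\rho'\in(\beta,\rho)$ and the compact embedding $\Cb^\rho\hookrightarrow\Cb^{\rho'}$). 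The paper's $\sigma$-continuity proof is via Arzel\`a--Ascoli on the bounded family $T(t)f_n\subset\Cb^\rho$; your appeal to ``$\hat B$ preserves the kernel structure'' is at best informal and would need this argument anyway.

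\textbf{Part (c).} Your route --- estimating $\hat BT(t)\one_K$ directly by writing $g=T(t)\one_K$ and arguing that $\nabla g$ and the local H\"older seminorm of $g$ decay at infinity --- is not the paper's argument, and as written it has a gap. ``Lemma~\ref{l.c0} applied to derivatives'' is not a valid invocation: Lemma~\ref{l.c0} concerns the action of a positive kernel operator on $\Bo(E)$ and says nothing about derivatives. One could try to deduce decay of $\nabla g$ from $g\in\Cb^\rho\cap\Co$ via a Landau--Kolmogorov type interpolation, but then the small-jump part of $\hat Bg(x)$ requires control of the $\Cb^\rho$-\emph{seminorm} of $g$ on balls around $x$, which does not follow from $g\in\Cb^\rho\cap\Co$ alone. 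The paper avoids all of this: under \eqref{e.tightness}, $\hat Bf\in\Bo(\R^d)$ is immediate for $f\in\Cb^\rho\cap\Cc(\R^d)$ (outside a neighbourhood of $\supp f$ only the non-local part survives and is controlled by the tail mass of $\mu$); this is then extended to $f\in D(A)$ using the $\Cc^\infty$-core and the $\|\cdot\|_\infty$-continuity of $\hat BR(\lambda)$, and then to $\Co(\R^d)$ by density. You have also misattributed the role of the core hypothesis: in part (c) it is used for exactly this approximation step, not via Lemma~\ref{l.bp} and bp-cores (that route is used only in Corollary~\ref{c.core} for the martingale problem).

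\textbf{Part (b).} The high-level idea --- approximate by continuous coefficients, deduce sub Markovianity of the approximants from the positive maximum principle, and pass to the limit via Corollary~\ref{c.stability} --- matches the paper, but the implementation you sketch has two gaps. First, Lemma~\ref{l.resolventconvergence.intdiff} requires $b_n(x)\to b(x)$ and $\mu_n(x,\cdot)\to\mu(x,\cdot)$ vaguely \emph{for every $x$}, not merely a.e.; mollification in $x$ delivers only a.e.\ convergence, and there is no measure against which ``a.e.'' would suffice here. The paper instead proves that the set of admissible $(b,\mu)$ for which $S_{b,\mu}$ is sub Markovian is bp-closed and invokes the bp-density of continuous pairs (Theorem~\ref{t.bpdense}/Corollary~\ref{c.bpdense}, proved in Appendix~\ref{appa}), which is exactly the right tool for measure-valued coefficients. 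Second, the argument with the positive maximum principle requires that $\hat B_n$ maps $D(A)\subset\Cb^\rho$ into $\Co(\R^d)$, which in turn needs the tightness conditions \eqref{eq-tightness} and the continuity criterion of Appendix~\ref{appb}; this cannot be applied directly under Hypothesis~\ref{h2} alone. The paper therefore first proves (b) under a local tightness condition and then removes it by splitting $\mu$ into its small-jump and large-jump parts, treating the latter as a bounded perturbation using \cite[Sect.~4.10]{ek}. You omit this reduction entirely, and it is necessary because Hypothesis~\ref{h2} does not impose any tightness on $\mu$.
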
 

\begin{rem} \label{r.markovian}
In the situation of Theorem \ref{t.perturbeuclidean}(b), the perturbed semigroup $S$ is Markovian if and only if the unperturbed semigroup $T$ is Markovian. Indeed, we only need to check if the semigroup is conservative, i.e.\ if the semigroup leaves the function $\one$ invariant. By \cite[Prop.\ 5.9]{k11} this is the case if and only if $\one$ belongs to the kernel of the full generator. As $\hat B \one=0$, we see that 
$(\hat A + \hat B)\one = 0$ is equivalent to $\hat A \one =0$.
\end{rem}

\begin{example}\label{ex.c0notinv}
Without the tightness assumption \eqref{e.tightness}, it is not true in general that the perturbed semigroup $S$ is a $\Co$-semigroup even if this is the case for $T$. This can be seen for example by considering in dimension $d=1$ the kernel $\mu (x, \cdot) \coloneqq \delta_{-x}$, which obviously does not satisfy \eqref{e.tightness}.  Choose $b=0$ and let $T$ be the heat semigroup. Then $\hat B f = f(0)\one_\R - f$ and the perturbed semigroup $S$ is (up to a rescaling by the factor $e^{-t}$) the semigroup from Example \ref{ex.noc0}, which is not a $\Co$-semigroup.
\end{example}

\begin{cor}\label{c.core}
Assume Hypotheses \ref{h1b} and \ref{h2} and that $T$ is a positive $\Co$-semigroup. If $\Cc^\infty(\R^d)$ is a core for the generator $A$ of $T|_{\Co(\R^d)}$, then $\{ (f, Af +\hat B f) : f\in \Cc^\infty(\R^d)\}$ is a bp-core for the full generator $\hat A +\hat B$ of $S$.
\end{cor}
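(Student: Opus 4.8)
The strategy is to combine the bp-core transfer result of Theorem \ref{t.perturbFeller}(a) with the bridge between operator cores and bp-cores established in Lemma \ref{l.bp}. First I would invoke Theorem \ref{t.perturbeuclidean}: under Hypotheses \ref{h1b} and \ref{h2}, the operator $\hat A + \hat B$ (defined on $D(\hat A)$) is the full generator of a $\Cb$-semigroup $S$ of strong Feller operators, and since $T$ is moreover assumed to be a positive $\Co$-semigroup, the hypotheses of Theorem \ref{t.perturbFeller} are in force (Hypothesis \ref{h1} having been verified in the course of proving Theorem \ref{t.perturbeuclidean}, as $\beta < \rho$ gives the required compatibility of $\hat B$ with $T$). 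Next, let $A$ denote the generator of $T|_{\Co(\R^d)}$ and recall that $A$ is the part of $\hat A$ in $\Co(\R^d)$. Since $\Cc^\infty(\R^d)$ is a core for $A$ by hypothesis, Lemma \ref{l.bp} applies and tells us that $\hat D := \{(f, Af) : f \in \Cc^\infty(\R^d)\}$ is a bp-core for $\hat A$.

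Now I apply Theorem \ref{t.perturbFeller}(a): since $\hat D$ is a bp-core for $\hat A$, the set $\{(f, g + \hat B f) : (f,g) \in \hat D\}$ is a bp-core for $\hat A + \hat B$, which is the full generator of $S$. Substituting $(f,g) = (f, Af)$ with $f \in \Cc^\infty(\R^d)$ gives precisely that $\{(f, Af + \hat B f) : f \in \Cc^\infty(\R^d)\}$ is a bp-core for $\hat A + \hat B$, which is the claim. One small point to check is that $\Cc^\infty(\R^d) \subset D(\hat A)$ so that $\hat B f$ makes sense and $Af = \hat A f$ for such $f$; this holds because $\Cc^\infty(\R^d) \subset D(A) \subset D(\hat A)$ and $A$ is the restriction of $\hat A$, and $D(\hat A) \subset \Cb^\rho(\R^d)$ by Lemma \ref{l.hoelderdomain} with $D(\hat B) \supset \Cb^\rho(\R^d)$ by the discussion following Hypothesis \ref{h2}.

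There is essentially no genuine obstacle here: the corollary is a direct concatenation of Lemma \ref{l.bp} (core $\Rightarrow$ bp-core for the unperturbed operator) and Theorem \ref{t.perturbFeller}(a) (bp-core transfers across the perturbation), with Theorem \ref{t.perturbeuclidean} supplying the ambient setup. If anything requires a word of care, it is simply confirming that all the hypotheses of Theorem \ref{t.perturbFeller} — in particular positivity of $T$ as a $\Co$-semigroup, which is what upgrades the $\Cb$-framework of Theorem \ref{t.k13} — are satisfied under the assumptions of the corollary; but these are exactly what we have assumed. I therefore expect the proof to be three or four lines long.
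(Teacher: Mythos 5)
Your proposal is correct and follows exactly the same route as the paper's proof: apply Lemma \ref{l.bp} to upgrade the $\Co$-core $\Cc^\infty(\R^d)$ to a bp-core $\{(f,Af): f\in\Cc^\infty(\R^d)\}$ for $\hat A$, then invoke Theorem \ref{t.perturbFeller}(a) to transfer it to a bp-core for $\hat A + \hat B$. The extra words you add about verifying Hypothesis \ref{h1} and checking $\Cc^\infty(\R^d)\subset D(\hat A)\subset D(\hat B)$ are sound but are already implicit in the preceding Theorem \ref{t.perturbeuclidean}, so the paper's two-line proof omits them.
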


\begin{proof} 
If $\Cc^{\infty}(\R^d)$ is a core for the generator on $\Co(\R^d)$, then, by Lemma~\ref{l.bp}, $\{(f,Af); f \in \Cc^{\infty}(\R^d)\}$ is a bp-core for $\hat{A}$. Thus, by Theorem~\ref{t.perturbFeller}(a), $\{(f,Af+\hat{B}f); f \in \Cc^{\infty}(R^d)\}$ is a bp-core for the full generator $\hat{A}+\hat{B}$.
\end{proof}

We now turn to the proof of Theorem~\ref{t.perturbeuclidean}. 

\begin{proof}[Proof of parts (a) and (c) of Theorem \ref{t.perturbeuclidean}]

(a) Let us first prove prove that $\hat B f \in \Bb(\R^d)$ for $f\in \Cb^\rho(\R^d)$. This is obvious for the local part of $\hat B f$ so we focus on the integral part. From Hypothesis \ref{h2}(ii), it follows that for any Borel subset $S$ of $\R^d\setminus\{0\}$ the map
$x\mapsto \int_{\R^d\setminus\{0\}} \one_S(y)\, \mu (x, dy)$ is measurable. But then so is $x\mapsto \int_{\R^d\setminus\{0\}} \one_{R\times S}(x,y)\, \mu (x, dy)$ for Borel sets $R\subset \R^d$ and $S\subset \R^d\setminus\{0\}$. An application of the monotone class theorem yields that $x \mapsto \int_{\R^d \setminus\{0\}} \I_A(x,y) \, \mu(x,dy)$ is Borel measurable for any $A \in \cB(\R^d) \otimes \cB(\R^d \setminus \{0\})$ and thus, by the sombrero lemma and dominated convergence, $x \mapsto \int_{\R^d \setminus \{0\}} g(x,y) \, \mu(x,dy)$ is Borel measurable for any function $g$ which is $\cB(\R^d) \otimes \cB(\R^d \setminus \{0\})$-measurable and satisfies $|g(x,y)| \leq K \min\{1,|y|^{\rho}\}$ for some constant $K>0$. This readily gives that $\hat{B}f$ is Borel measurable. Note that the discussion before Theorem \ref{t.perturbeuclidean} shows that, in fact, $\hat B$ is a bounded linear operator from $\Cb^\rho(\R^d)$ to $\Bb(\R^d)$. \smallskip

We now verify Hypothesis \ref{h1}, then part (a) immediately follows from Theorem \ref{t.k13}. As for condition (i), the boundedness of $\hat B T(t)$ follows from Hypothesis \ref{h1b}(i) and the boundedness of $\hat B$ proved above. As for the $\sigma$-continuity, let a bounded sequence $(f_n)_{n\in \N} \subset \Bb(\R^d)$ be given such that $f_n \to f$ pointwise. By Hypothesis \ref{h1b}(i), $T(t)f_n$ is bounded in
$\Cb^\rho(\R^d)$. By the Arzel\`a--Ascoli theoren, passing to a subsequence, we may and shall assume that $T(t)f_n$ converges locally uniformly to some continuous function. By the $\sigma$-continuity of $T(t)$ the sequence $T(t)f_n$ converges pointwise to $T(t)f$, whence the only possible limit is $T(t)f$ and it follows that $T(t)f_n \to T(t)f$ locally uniformly. Note that in the case where $\rho>1$ we also obtain that $\nabla T(t)f_n \to \nabla T(t)f$ locally uniformly. From this it is immediate that the local part of $\hat Bf_n$ converges pointwise to that of $\hat B f$. As for the integral part this convergence follows from dominated convergence, noting that as a consequence of the uniform boundedness in $\Cb^{\rho}(\R^d)$ we find an integrable majorant of the form
$C\min\{1, |y|^\rho\}$.

The proof of Condition \ref{h1}(ii) is similar, taking Lemma \ref{l.hoelderdomain} into account. As for Condition \ref{h1}(iii), we note that $(t,x) \mapsto T(t) f(x)$ is Borel measurable for every $f \in \Bb(\R^d)$ and using a reasoning similar to that in the first part of this proof, it follows that  $(t,x) \mapsto \hat{B}T(t) f(x)$ is Borel measurable. 
 
Condition \ref{h1}(iv) is an immediate consequence of Hypothesis \ref{h1b}(ii) and the boundedness of $\hat B$.\medskip

(c) To prove this part, we use Theorem \ref{t.perturbFeller}. We thus have to prove that $\hat B T(t) \Bo(\R^d)\subset \Bo(\R^d)$. We will only consider the case $\rho>1$; for $\rho \leq 1$ the reasoning is a bit simpler because all terms involving the gradient vanish by Hypothesis~\ref{h2}(iv).  Take $f\in \Cb^\rho(\R^d)\cap \Cc(\R^d)$ and choose $R>0$ such that the support of $f$ is contained in the ball $B(0,R)$. Taking into account that in this part we assume the tightness condition \eqref{e.tightness}, we find that if $|x|>R+r$ for some $r>0$, then
\begin{align*}
			|\hat{B} f(x)| 
			= \left| \int f(x+y) \, \mu(x,dy) \right|
			&\leq \|f\|_{\infty} \int_{\{|y| \geq r\}} \mu(x,dy) 
			\to 0
\end{align*}
as $r\to \infty$. Thus, $\hat B f \in \Bo(\R^d)$. 

Now let $g\in \Co(\R^d)$ be given. We denote the generator of $T|_{\Co(\R^d)}$ by $A$. As $\Cc^\infty(\R^d)$ is a core, we find a sequence $(f_n)\subset \Cc^\infty(\R^d)$ such that
$g_n \coloneqq \lambda f_n - A f_n  \to g$, see \cite[Ex. II.1.15]{en}. It then follows that $f_n \to f=R(\lambda)g \in D(A)$. 
By the above, $\hat B R(\lambda)g_n = \hat B f_n \in \Bo(\R^d)$.
As $\hat BR(\lambda)$ is $\|\cdot\|_\infty$-continuous, it follows that $\hat B f = \hat B R(\lambda )g = \lim_{n\to \infty} \hat BR(\lambda) g_n = \lim_{n\to\infty}\hat B f_n$ also belongs to $\Bo(\R^d)$. Consequently, $\hat B f \in \Bo(\R^d)$ whenever $f\in D(A)$.

As $T(t)D(A) \subset D(A)$ for every $t>0$, it follows that $\hat B T(t)f \in \Bo(\R^d)$ for $f\in D(A)$. But as $D(A)$ is dense in $\Co(\R^d)$ and $\hat B T(t)$ is $\|\cdot\|$-continuous, this  is also true for $f\in \Co(\R^d)$.
\end{proof}

\begin{rem}\label{rem.nocore}
Theorem \ref{t.perturbeuclidean}(c) remains valid also without the assumption that $\Cc^\infty(\R^d)$ is a core for $T|_{\Co(\R^d)}$, provided we assume that $b\in \Bo(\R^d; \R^d)$. 

To see this,  pick a sequence $\phi_n \in \Cc^\infty(\R^d)$ with $\one_{B(0,n)} \leq \phi_n \leq \one_{B(0,2n)}$ such that  $\sup_n \|\phi_n\|_{\Cb^2(\R^d)}<\infty$. Given $f\in \Cb^\rho(\R^d) \cap \Co(\R^d)$, we put $f_n \coloneqq f\phi_n$. Then $f_n \in \Cb^\rho(\R^d)\cap \Cc(\R^d)$ and $f_n$ is a bounded sequence that converges to $f$ uniformly on $\R^d$; moreover, $\nabla f_n$ is bounded and converges to $\nabla f$ locally uniformly.

As $b\in \Bo(\R^d; \R^d)$ and the sequence $\nabla f_n$ is uniformly bounded, it is easy to see that $b\nabla f_n \to b\nabla f$ with respect to $\|\cdot\|_\infty$. Making use of the boundedness of $(f_n)$ in $\Cb^\rho(\R^d)$ and the tightness assumption \eqref{e.tightness} we can prove that also the nonlocal part converges with respect to $\|\cdot\|_\infty$. . In conclusion, $\hat B f_n \to \hat B f$ with respect to $\|\cdot\|_\infty$. As the proof of Theorem \ref{t.perturbeuclidean}(b) shows that  $\hat B f_n \in \Bo(\R^d)$, the same is true for its uniform limit $\hat B f$. This proves that $\hat B f\in \Bo(\R^d)$ for all $f\in \Cb^\rho(\R^d)\cap\Co(\R^d)$ which is enough to ensure
that $\hat BT(t)$ maps $\Co(\R^d)$ to $\Bo(\R^d)$.
\end{rem}

To prove part (b) of Theorem \ref{t.perturbeuclidean}, we will employ the convergence result of Proposition \ref{p.resolventconvergence}. 
As this form of resolvent convergence is of independent interest, we formulate a separate lemma.

\begin{lem} \label{l.resolventconvergence.intdiff}
Assume Hypothesis \ref{h1b} and let sequences $(b_n)$ and $(\mu_n)$ be given that satisfy the assumptions in Hypothesis \ref{h2} for a common constant $\beta \in (0, \min \{2, \rho\})$. Denote the associated operators (see \eqref{eq-intdiff}) by $\hat B_n$. Moreover, we are given functions $b$ and $\nu$  such that
	\begin{enumerate}
		\item\label{l.resolventconvergence.intdiff.i} $b_n(x) \to b(x)$ and $\mu_n(x,\cdot) \to \mu(x,\cdot)$ vaguely for every $x \in \R^d$,
		\item\label{l.resolventconvergence.intdiff.ii} $\sup_{n \in \N}\left( \|b_n\|_{\infty} + \|\mu_n\|_{\beta}\right)<\infty$ with $\|\cdot\|_{\beta}$ defined in \eqref{eq-kernel.integrability}.
		\item\label{l.resolventconvergence.intdiff.iii} $\sup_{x\in K}\sup_{n\in \N}\mu_n (x, \{ |y|>R\}) \to 0$ as $R\to \infty$ for $K\Subset \R^d$.
	\end{enumerate}
	Then, denoting the operator associated to $b$ and $\mu$ by $\hat B$, we find
	\begin{equation*}
		(\lambda-(\hat{A}+\hat{B}_n))^{-1} f \weak (\lambda-(\hat{A}+\hat{B}))^{-1} f, \qquad f \in \Bb(\R^d),
	\end{equation*}
	for large enough $\lambda$.
\end{lem}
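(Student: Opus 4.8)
The plan is to verify the three hypotheses (i)--(iii) of Proposition~\ref{p.resolventconvergence} for the operators $\hat B_n$ and $\hat B$ (with the common semigroup $T$), after which the conclusion is immediate. Note first that the proof of parts (a) and (c) of Theorem~\ref{t.perturbeuclidean} already shows that each $\hat B_n$ (and $\hat B$) satisfies Hypothesis~\ref{h1} relative to $T$, using only Hypotheses~\ref{h1b} and~\ref{h2} with the uniform bound from~\ref{l.resolventconvergence.intdiff.ii}; in particular all the operators $\hat B_n R(\lambda)$ and $\hat B R(\lambda)$ belong to $\cL(\Bb(\R^d),\sigma)$. For the uniform decay~(i), I would combine Lemma~\ref{l.hoelderdomain} with the estimate $\|\hat B_n g\|_\infty \le 2\|\mu_n\|_\beta \|g\|_{\Cb^\rho}$ (resp.\ the bound on the local part via $\|b_n\|_\infty$) established in the discussion before Theorem~\ref{t.perturbeuclidean}: this gives $\|\hat B_n R(\lambda)\| \le C(\lambda)(2\sup_n\|\mu_n\|_\beta + \sup_n\|b_n\|_\infty)$, and since $C(\lambda)\to 0$ as $\Re\lambda\to\infty$, assumption~(i) of Proposition~\ref{p.resolventconvergence} holds.

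For assumption~(ii), fix $h\in\Bb(\R^d)$ and $\lambda$ large, and set $g := R(\lambda)h \in \Cb^\rho(\R^d)$ (Lemma~\ref{l.hoelderdomain}). I must show $\hat B_n g \weak \hat B g$, i.e.\ $\hat B_n g(x) \to \hat B g(x)$ pointwise with a uniform sup-bound; the bound is clear from~(i). For the pointwise convergence at a fixed $x$: the local part $b_n(x)\cdot\nabla g(x) \to b(x)\cdot\nabla g(x)$ by~\ref{l.resolventconvergence.intdiff.i}. For the integral part, the integrand $y\mapsto g(x+y)-g(x)-y\cdot\nabla g(x)\chi(y)$ is a fixed continuous function on $\R^d\setminus\{0\}$ bounded by $2\|g\|_{\Cb^\rho}\min\{1,|y|^\rho\}$, which tends to $0$ as $|y|\to 0$; splitting $\R^d\setminus\{0\}$ into a small ball $\{|y|<\delta\}$, an annulus $\{\delta\le|y|\le R\}$, and a tail $\{|y|>R\}$, vague convergence $\mu_n(x,\cdot)\to\mu(x,\cdot)$ handles the annulus (the integrand can be cut off to have compact support in $\R^d\setminus\{0\}$ at negligible cost), while~\ref{l.resolventconvergence.intdiff.ii} controls the small-ball part uniformly in $n$ (it is $\le 2\|g\|_{\Cb^\rho}\sup_n\|\mu_n\|_\beta \cdot \sup_{|y|<\delta}\min\{1,|y|^{\rho-\beta}\}$, small for $\delta$ small) and~\ref{l.resolventconvergence.intdiff.iii} controls the tail uniformly in $n$ for $x$ in a fixed compact neighbourhood. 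This is the step requiring the most care, as one has to balance the three regions and invoke the correct uniform bounds; it is essentially the same truncation argument used to verify Hypothesis~\ref{h1}(i) in the proof of Theorem~\ref{t.perturbeuclidean}.

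For assumption~(iii), suppose $h_n\weak 0$ in $\Bb(\R^d)$, so $\sup_n\|h_n\|_\infty<\infty$ and $h_n\to 0$ pointwise; put $g_n := R(\lambda)h_n$. By Lemma~\ref{l.hoelderdomain} the $g_n$ are uniformly bounded in $\Cb^\rho(\R^d)$, and by $\sigma$-continuity of $R(\lambda)$ we have $g_n\to 0$ pointwise; by the Arzel\`a--Ascoli argument (exactly as in the proof of Theorem~\ref{t.perturbeuclidean}(a), using the uniform $\Cb^\rho$-bound) $g_n\to 0$ locally uniformly, and when $\rho>1$ also $\nabla g_n\to 0$ locally uniformly. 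I then show $\hat B_n g_n(x)\to 0$ for each $x$ with a uniform sup-bound (the latter again from~(i)). The local part $b_n(x)\cdot\nabla g_n(x)\to 0$ since $\|b_n\|_\infty$ is bounded and $\nabla g_n(x)\to 0$. For the integral part, the same three-region decomposition applies: the small ball contributes at most $2\sup_n\|g_n\|_{\Cb^\rho}\sup_n\|\mu_n\|_\beta\sup_{|y|<\delta}\min\{1,|y|^{\rho-\beta}\}$ uniformly in $n$; the tail $\{|y|>R\}$ contributes at most $2\sup_n\|g_n\|_\infty \sup_n\mu_n(x,\{|y|>R\})$, small uniformly in $n$ by~\ref{l.resolventconvergence.intdiff.iii}; and on the fixed annulus $\{\delta\le|y|\le R\}$ the integrand is bounded by $C\|g_n\|_{C(K)}$ for $K = B(x,R)$ (and in the $\rho>1$ case also involves $\|\nabla g_n\|_{C(B(x,2))}$), which tends to $0$ by local uniform convergence, while $\sup_n\mu_n(x,\{\delta\le|y|\le R\})<\infty$. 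Hence $\hat B_n R(\lambda)h_n\weak 0$. With (i)--(iii) verified, Proposition~\ref{p.resolventconvergence} yields the claimed convergence $(\lambda-(\hat A+\hat B_n))^{-1}f\weak(\lambda-(\hat A+\hat B))^{-1}f$ for all $f\in\Bb(\R^d)$ and large $\lambda$. I expect the main obstacle to be the bookkeeping in the three-region splitting of the integral parts in~(ii) and~(iii), particularly ensuring that every bound used is uniform in $n$ (and locally uniform in $x$) — the decay of the integrand near $0$ must be weighed against the $\|\mu_n\|_\beta$ bound via the exponent $\rho-\beta>0$, which is precisely why $\beta<\rho$ is assumed.
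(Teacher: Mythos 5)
Your proof is correct and follows essentially the same route as the paper: verify assumptions (i)--(iii) of Proposition~\ref{p.resolventconvergence} using Lemma~\ref{l.hoelderdomain}, the uniform bound on $\|\hat B_n\|_{\cL(\Cb^\beta,\Bb)}$, and the small-ball / annulus / tail decomposition of the nonlocal part. The one place where your argument diverges slightly is step~(iii): you invoke Arzel\`a--Ascoli to obtain locally uniform $C^1$-convergence of $g_n=R(\lambda)h_n$ and then estimate each region directly, whereas the paper picks an intermediate exponent $\rho'\in(\beta,\rho)$, bounds the nonlocal part by $\|u_n\|_{\Cb^{\rho'}(B[x,R])}\int_{\{|y|\le R\}}|y|^{\rho'}\,\mu_n(x,dy)$, and invokes the compact embedding $\Cb^\rho\hookrightarrow C^{\rho'}$ together with the subsequence principle; both variants rest on the same compactness phenomenon and are equally valid. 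One small gloss in your step~(ii): the ``cut off at negligible cost'' should be justified by choosing $\delta$ and $R$ with $\mu(x,\{|y|=\delta\})=\mu(x,\{|y|=R\})=0$, possible since at most countably many radii carry positive $\mu(x,\cdot)$-mass, which is exactly how the paper phrases this step.
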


\begin{proof}
	We give the proof only for $\rho>1$. If $\rho \leq 1$, then by Hypothesis \ref{h2}(iv) all terms involving the gradient vanish in the below computations; apart from that the reasoning is analogous. 
	
Without loss of generality, we may assume $\beta \geq 1$ (otherwise consider $\tilde{\beta} := \max\{1,\beta\}$). We first note that also the functions $b$ and $\nu$ satisfy Hypothesis \ref{h2} (see Lemma \ref{l.kalphabpclosed}), whence Theorem \ref{t.perturbeuclidean}(a) yields that $\hat A + \hat B$ and, for every $n\in \N$, the operator $\hat A + \hat B_n$ is the full generator of a $\Cb$-semigroup. The discussion following Hypothesis~\ref{h2} shows that the operator $\hat B_n$ defines a bounded linear operator from $\Cb^{\beta} (\R^d)$ to $\Bb(\R^d)$. Using our assumption (ii) above, we actually see that there is a constant $K>0$ such that
	\begin{equation}
		\|\hat{B}f\|_{\infty}  + \sup_{n \in \N} \|\hat{B}_n f\|_{\infty} \leq K \|f\|_{\Cb^{\beta}(\R^d)}, \qquad f \in \Cb^{\beta}(\R^d).
		\label{eq-bn.bdd}
	\end{equation}
We now check the assumptions of Proposition \ref{p.resolventconvergence}.\medskip

\emph{Assumption (i):} By Lemma \ref{l.hoelderdomain}, we have
\[
\|\hat B_n R(\lambda)\| \leq \|\hat B_n\|_{\cL (\Cb^\beta, \Bb)}\|R(\lambda)\|_{\cL(\Bb, \Cb^\beta)}\leq KC(\lambda) \to 0
\]
as $\lambda \to \infty$.\smallskip

	\emph{Assumption (ii):} In view of Lemma \ref{l.hoelderdomain}, it suffices to prove that $\hat{B}_n f \weak \hat{B}f$ for every $f \in \Cb^{\rho}(\R^d)$, so fix $f \in \Cb^{\rho}(\R^d)$. Because of \eqref{eq-bn.bdd}, it actually suffices to show pointwise convergence.  Since $b_n$ converges pointwise to $b$, it is clear that the local part of $\hat{B}_nf$ converges to the local part of $\hat{B}f$; therefore we assume in the following that the local part is zero. Write
	\begin{equation*}
		\hat{B}_n f = I_n^{(r)} f + J_n^{(r,R)}f + K_n^{(R)} f,
	\end{equation*}
	where 
	\begin{align*}
		I_n^{(r)} f(x) &:= \int_{\{|y| \leq r\}} (f(x+y)-f(x)-\nabla f(x) \cdot y) \, \mu_n(x,dy) \\
		J_n^{(r,R)} f(x) &:= \int_{\{r<|y| <R\}} (f(x+y)-f(x)-\nabla f(x) \cdot y \chi(y)) \, \mu_n(x,dy)  \\
		K_n^{(R)} f(x) &:= \int_{\{|y| \geq R\}} (f(x+y)-f(x)) \, \mu_n(x,dy) 
	\end{align*}
	for $r \in (0,1)$ and $R \in (1,\infty)$. We decompose the operator $\hat{B}$ in the same way, and we study the convergence for each of the terms separately. By another application of Taylor's formula,
	\begin{equation*}
		\sup_{n \in \N} |I_n^{(r)} f(x)| 
		\leq  \|f\|_{\Cb^{\rho}(\R^d)}  \sup_{n \in \N} \int_{\{|y| \leq r\}} |y|^{\rho} \, \mu_n(x,dy)
	\end{equation*}
	As $\rho>\beta$, it follows from \ref{l.resolventconvergence.intdiff.ii} that the right-hand side is finite and converges to $0$ (uniformly in $x$) as $r \to 0$. As $\|\mu\|_{\beta}<\infty$, an analogous estimate holds for $I^{(r)} f$. Using 
	\begin{equation*}
		|K_n^{(R)} f(x)| \leq 2 \|f\|_{\infty} \int_{\{|y| \geq R\}} \mu_n(x,dy)
	\end{equation*}
	and the corresponding estimate for $K^{(R)} f$, assumption (iii) implies that  $|K_n^{(R)} f(x)| + |K^{(R)} f(x)| \to 0$ uniformly in $n \in \N$ and $x$ in compact subsets of $\R^d$  as $R \to \infty$. As $f$ and $\nabla f$ are continuous, the vague convergence $\mu_n(x) \to \mu(x)$ entails that $J_{n}^{r,R}f(x) \to J^{r,R} f(x)$ for any $r,R>0$ with  $\mu(x,\{|y|=r\})=0$ and $\nu(x,\{|y|=R\})=0$. Since there are for each fixed $x$ at most countably many radii with $\mu(x,\{|y|=r\})>0$,  we can let $r \to 0$ and $R \to \infty$ along suitable sequences to deduce that $\hat{B}_n f(x) \to \hat{B}f(x)$. \smallskip
	
\emph{Assumption (iii)}: Making use of Lemma \ref{l.hoelderdomain} again, we see that it suffices to prove that
	$\hat{B}_n u_n \weak 0$ for any sequence $(u_n)_{n \in \N}$ with $u_n \weak 0$ and $\sup_{n\in \N} \|u_n\|_{\Cb^{\rho}}<\infty$.
	
	Fix such a sequence.  Then $\sup_{n \in \N} \|\hat{B}_nu_n\|_{\infty}<\infty$ is immediate from \eqref{eq-bn.bdd} and it only remains to prove pointwise convergence. Pick $\rho' \in (\beta, \rho)$. By Taylor's formula, 
	\begin{align*}
		|\hat{B}_n u_n(x)| 
		&\leq \|b_n\|_{\infty} |\nabla u_n(x)| + \|u_n\|_{\Cb^{\rho'}(B[x,R])} \int_{\{|y| \leq R\}} |y|^{\rho'} \, \mu_n(x,dy)  \\
		&\quad + 2 \|u_n\|_{\infty} \int_{\{|y|>R\}} \, \mu_n(x,dy)
	\end{align*}
	for any $R>1$; here $B[x,r]$ is the closed ball around $x$ with radius $R$. Using \ref{l.resolventconvergence.intdiff.ii}, the fact that $\rho>1$ and that  $(b_n)_{n \in \N}$ is bounded, we find that given $\varepsilon>0$ there is some $R \gg 1$ such that 
	\begin{equation*}
		|\hat{B}_n u_n(x)|
		\leq C \|u_n\|_{\Cb^{\rho'}(B[x,R])} + \varepsilon 
	\end{equation*}
	for a finite constant $C=C(R)$. In order to deduce that the left-hand side converges to $0$, we use the subsequence principle. As $(u_n)$ is bounded in $\Cb^\rho$, by compact embedding, there is a subsequence $(u_{n''})_{n'' \in \N}$ which  converges on the closed ball $B[x,r]$ with respect to the $C^{\rho'}$-norm. Since we know that $u_n \to 0$  pointwise, the $C^{\rho}$-limit is also zero and so
	\begin{equation*}
		\limsup_{n'' \to \infty} |\hat{B}_{n''} u_{n''}(x)| \leq \varepsilon.
	\end{equation*}
	Hence, $\limsup_{n \to \infty} |\hat{B}_n u_n(x)| \leq \varepsilon$. As $\varepsilon>0$ is arbitrary, we conclude that $\hat{B}_n u_n(x) \to 0$ for all $x \in \R^d$.\medskip
	
Now Proposition~\ref{p.resolventconvergence} yields the claim.
\end{proof}

We can now finish the proof of Theorem \ref{t.perturbeuclidean}

\begin{proof}[Proof of Theorem~\ref{t.perturbeuclidean}(b)]

Let us first only consider kernels $\mu$ satisfying
\begin{equation}\label{eq.localtight}
\sup_{x\in K} \mu (x, \{|y|> R\}) \to 0\quad\mbox{as}\quad R\to \infty\qquad \mbox{ for all } K\Subset \R^d.
\end{equation}

Given $b$ and $\mu$ as in Hypothesis \ref{h2}, we write $\hat B_{b,\mu}$ for the operator defined via \eqref{eq-intdiff} with these particular coefficients and $S_{b,\mu} = (S_{b,\mu}(t))_{t\geq 0}$ for the semigroup generated by the operator $\hat A + \hat B_{b,\mu}$, which exists by part (a) of Theorem \ref{t.perturbeuclidean}. Denote by $F$ the family of pairs $(b,\mu)$ for which the assertion holds, i.e.\
\begin{equation*}
	F= \{(b,\mu) \, \, \text{satisfying \eqref{eq.localtight} and Hypothesis~\ref{h2} }\::\: \text{$S_{b,\mu}$ is sub Markovian}\}.
\end{equation*}
It follows from Lemma \ref{l.resolventconvergence.intdiff} and Corollary \ref{c.stability} that $F$ is bp-closed. Therefore, it suffices to show that $(b,\mu) \in F$ for any $b \in \Cc(\R^d;\R^d)$ and any $\mu \in \Cc\big(\R^d, \mathcal{M}^+(\R^d\setminus \{0\})\big)$ with $\|\mu\|_{\beta}<\infty$ and \eqref{eq.localtight}. This is a consequence of the known fact that $\Cc(\R^d;\R^d)$ is bp-dense in $\Bb(\R^d;\R^d)$ and an analogous result for measures which we establish in Appendix~\ref{appa}; see in particular Corollary~\ref{c.bpdense} (with $g(y)\coloneqq \min\{|y|^{\beta},1\}$).\smallskip

So fix $b \in \Cc(\R^d;\R^d)$ and $\mu \in \Cc\big(\R^d, \mathcal{M}^+(\R^d\setminus \{0\})\big)$ with \eqref{eq.localtight}. Then, in particular, Theorem \ref{t.perturbeuclidean}(b) and Remark~\ref{rem.nocore} show that $S_{b, \mu}$ is a $\Co$-semigroup; let us denote the generator of the restriction of $S_{b,\mu}$ to $\Co(\R^d)$ by $L_{b, \mu}$. 

It was seen in the proof of Theorem \ref{t.perturbeuclidean}(b) that
for $f\in \Cb^\rho(\R^d)\cap \Co(\R^d)$ we have $\hat B_{b,\nu} f\in \Bo(\R^d)$. 
Moreover, $\|\mu\|_{\beta} < \infty$ and \eqref{eq.localtight} give
	\begin{equation}
			\limsup_{R \to \infty} \sup_{x \in K} \int_{\{|y|>R\}} \, \mu(x,dy)=0 
			\qquad \lim_{r \to 0} \sup_{x \in K} \int_{\{|y| \leq r\}} |y|^2 \, \mu(x,dy)=0,
			\label{eq-tightness}
	\end{equation}
	for all $K\Subset \R^d$. This entails continuity of $\hat{B}_{b,\nu}f$, see Theorem~\ref{appb-1} (note that we can assume without loss of generality that $\chi$ is smooth by modifying the drift term accordingly). Hence, $\hat{B}_{b, \mu} f\in \Co(\R^d)$.\smallskip
	
Denoting by $A$ the generator of the restriction of $T$ to $\Co(\R^d)$, we may consider $L\coloneqq A+\hat B_{b, \mu}$, defined on $D(A)$, as an operator on the space $\Co(\R^d)$. We note that  $\hat B_{b, \mu} R(\lambda)\Co(\R^d) \subset \Co(\R^d)$. and from this and  Lemma \ref{l.perturbedresolvent} it follows that, for large enough $\lambda$, the operator $\lambda - L$ is surjective.

As is well known (see e.g.\ \cite[Thm.\ 4.2.2]{ek}) a strongly continuous semigroup on $\Co(\R^d)$ is sub Markovian if and only if its generator satisfies the positive maximum principle. By assumption, this is certainly true for $A$. However, if $f\in D(A)$ satisfies $f(x_0) = \max\{ f(x) : x\in \R^d\} \geq 0$, then $f(x_0 + y) - f(y) \geq 0$ for all $y\in \R^d$; if $\rho>1$, we also see that $\nabla f(x_0)= 0$. It follows from $D(A) \subseteq \Cb^{\rho}(\R^d)$, cf.\ Lemma~\ref{l.hoelderdomain}, and the definition of the operator $\hat B_{b, \mu}$ and that $\hat B_{b, \mu} f(x_0) \geq 0$. This shows that
$L= A+\hat B_{b, \mu}$ satisfies the positive maximum principle.

By \cite[Thm.\ 4.2.2]{ek}, $L$ generates a sub Markovian semigroup on $\Co(\R^d)$. However, $L_{b, \nu}$ is obviously an extension of $L$, so we must have $L=L_{b,\nu}$ and thus the restriction of $S$ to $\Co(\R^d)$ is sub Markovian and so is then also $S$ itself. This shows that
$(b,\nu) \in F$ and finishes the proof in this case.\medskip

To remove the tightness assumption \eqref{eq.localtight}, we consider the `small' and the `large' jumps created by $\mu$ separately. To that end, we put
\begin{equation}
\mu_s (x, dy) \coloneqq \one_{\{|y|\leq 1\}}\mu (x, dy) \qquad\mbox{and}\qquad \mu_l(x, dy) \coloneqq \one_{\{|y|> 1\}} \mu (x, dy).
\label{e.splitOperator}
\end{equation}
We then split the operator $\hat B = \hat B_1 + \hat B_2$ where, in the notation above, $\hat B_1 = B_{b, \mu_s}$ and $\hat B_2 = \hat B_{0, \mu_l}$. As $\mu_s(x, \{|y|>1\}) \equiv 0$, we can apply the above to infer that $\hat A + \hat B_1$ is the full generator of a sub Markovian semigroup. Noting that $\hat B_2$ is a bounded operator and taking the special structure of this operator into account, the results of \cite[Sect.\ 4.10]{ek} (see in particular Exercise 3 on p.\ 261) yield that also $\hat A + \hat B = \hat A + \hat B_1 +\hat B_2$ is the full generator of a sub Markovian semigroup.
\end{proof}

\section{Application: Martingale problems} \label{mp}

We apply the results from the previous section to study perturbations of martingale problems associated with L\'evy-type operators. We do not strive for full generality; we rather would like to illustrate how our results can be applied in this setting. We will make the following assumptions
which ensure that $T$ and the perturbed semigroup $S$ are Feller semigroups, provided the tightness assumption \eqref{e.tightness} is satisfied. Note, however, that \eqref{e.tightness} is not assumed in this section.

\begin{hyp} \label{hyp.mp}
	Assume that $T=(T(t))_{t \geq 0}$ is a semigroup of kernel operators such that
	\begin{enumerate}
		\item $(T(t))_{t \geq 0}$ satisfies Hypothesis~\ref{h1b} for some constant $\rho>0$,
		\item $(T(t))_{t \geq 0}$ is a Markovian $\Co$-semigroup,
		\item the test functions $\Cc^{\infty}(\R^d)$ are a core for the generator $A$ of $T|_{\Co(\R^d)}$. 
	\end{enumerate}
\end{hyp}

It follows from general theory (see, e.g., \cite[Sect. 3.2]{revuz-yor}) that we can associate Markov process (actually a Feller process) with c\`adl\`ag paths with the semigroup $T$, i.e.\ there is a Markov process whose transition semigroup equals $T$. 

There is another connection between stochastic processes and (Feller) semigroups via \emph{martingale problems}. Let us briefly recall the relevant notions. By $\cD([0,\infty); \R^d)$ we denote the \emph{Skorohod space}, i.e.\ the space of c\`adl\`ag functions $\omega: [0,\infty) \to \R^d$; see \cite{bill} for more information. Given a  (possibly multi-valued) operator $L$ with domain $D(L)$ , a set $D\subset D(L)$ and a measure $\mu$ on $\R^d$, a measure $\PP$ on $\cD([0,\infty); \R^d)$ is called \emph{a solution to the $(L, D; \mu)$-martingale problem} if (i) $\PP (\omega_0 \in A) = \mu (A)$ for all Borel sets $A\subset \R^d$ and (ii) the process 
\[
M_t^{(f,g)}(\omega) \coloneqq f(\omega_t) - f(\omega_0) - \int_0^t g(\omega_s)\, ds, \quad w \in \cD[0,\infty),
\]
is a martingale under $\PP$ with respect to the canonical filtration $(\sigma( \omega_s : s\leq t))_{t\geq 0}$ for any $(f,g) \in L$ with $f\in D$.
In case that $L$ is single valued on $D$, we have $g=Lf$. 

We say that \emph{uniqueness in law} holds for the $(L,D)$-martingale problem if any two solutions $\PP_1$, $\PP_2$ with identical initial distribution satisfy $\PP_1=\PP_2$. The $(L,D)$-martingale problem is \emph{well-posed} if for every $x \in \R^d$ there exists a unique solution $\PP^x$ to the $(L,D,\delta_x)$-martingale problem.\smallskip

If $(X_t)$ is a Markov Process associated with the semigroup $T$, then its distribution solves  the $(\hat A, D(\hat A))$-martingale problem for the full generator $\hat A$ of $T$, see \cite[Prop. 4.1.7]{ek}. 
Conversely, if the $(L, D)$-martingale problem is well-posed, then under each measure $\PP^x$, the canonical process $(\omega_t)_{t\geq 0}$ is a Markov process \cite[Thm.\ 4.4.2]{ek}; if $(L,D)$ is, in a sense, `rich enough' to determine a semigroup uniquely, this semigroup is the transition semigroup of the process, see \cite[Thm.\ 4.4.2]{ek}.

\begin{thm} \label{t.mp.wellposed}
	Let $(T(t))_{t \geq 0}$ be a semigroup satisfying Hypothesis~\ref{hyp.mp} with full generator $\hat{A}$ and denote by $A$ the generator of $T|_{\Co(\R^d)}$. If $\hat{B}$ is a L\'evy-type operator
	\begin{equation*}
		\hat{B}f(x) = b(x) \nabla f(x) + \int_{\R^d \setminus \{0\}} (f(x+y)-f(x)-\nabla f(x) \cdot y \chi(y)) \, \mu(x,dy)
	\end{equation*}
	with $b(x)$ and $\nu(x,dy)$ satisfying Hypothesis~\ref{h2}, then the $(A+\hat B, \Cc^\infty(\R^d))$-martingale problem is well-posed.
\end{thm}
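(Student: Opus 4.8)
The plan is to establish existence and uniqueness of solutions separately. Throughout I will use that, by Theorem~\ref{t.perturbeuclidean} together with Remark~\ref{r.markovian}, the operator $\hat A+\hat B$ is the full generator of a \emph{Markovian} $\Cb$-Feller semigroup $S=(S(t))_{t\ge0}$ (Hypothesis~\ref{hyp.mp}(ii) makes $T$, hence $S$, Markovian), that by Corollary~\ref{c.core} the set $\{(f,Af+\hat Bf):f\in\Cc^\infty(\R^d)\}$ is a bp-core for $\hat A+\hat B$, and that, by the same results, both facts hold verbatim for any choice of coefficients meeting Hypothesis~\ref{h2}. Write $R(\lambda):=(\lambda-(\hat A+\hat B))^{-1}$; for $\lambda>0$ this is the Laplace transform of $S$, cf.\ \eqref{eq.resolvent}.

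\emph{Uniqueness.} Fix $x$ and let $\PP$ solve the $(A+\hat B,\Cc^\infty(\R^d),\delta_x)$-martingale problem. The set of pairs $(f,g)\in\Bb(\R^d)\times\Bb(\R^d)$ for which $M^{(f,g)}$ is a $\PP$-martingale is bp-closed: if $(f_n,g_n)\to(f,g)$ bp, then $M^{(f_n,g_n)}_t\to M^{(f,g)}_t$ boundedly and pointwise on $\cD([0,\infty);\R^d)$ by dominated convergence, and the martingale property passes to the limit by dominated convergence again. This set contains $\{(f,Af+\hat Bf):f\in\Cc^\infty(\R^d)\}$, hence its bp-closure $\hat A+\hat B$. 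Applying this to $(f,g)=(R(\lambda)h,\lambda R(\lambda)h-h)\in\hat A+\hat B$ for $h\in\Cb(\R^d)$ and $\lambda>0$, taking expectations and using $\PP(\omega_0=x)=1$ and Fubini shows that $v(t):=\E^{\PP}[R(\lambda)h(\omega_t)]$ satisfies $v(t)=R(\lambda)h(x)+\int_0^t\bigl(\lambda v(s)-\E^{\PP}[h(\omega_s)]\bigr)\,ds$; multiplying by $e^{-\lambda t}$ and integrating over $[0,\infty)$ gives
\[
\int_0^\infty e^{-\lambda t}\,\E^{\PP}[h(\omega_t)]\,dt=R(\lambda)h(x)=\int_0^\infty e^{-\lambda t}\,(S(t)h)(x)\,dt.
\]
Since $t\mapsto\E^{\PP}[h(\omega_t)]$ (by càdlàg paths and dominated convergence) and $t\mapsto(S(t)h)(x)$ (by stochastic continuity of the $\Cb$-semigroup $S$ and the semigroup law) are right-continuous, uniqueness of the Laplace transform yields $\E^{\PP}[h(\omega_t)]=(S(t)h)(x)$ for all $t\ge0$ and all $h\in\Cb(\R^d)$. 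Thus every solution has the same one-dimensional marginals, namely the transition kernels of $S$; by the standard conditioning argument for martingale problems (\cite[Thm.~4.4.2]{ek}, whose proof uses only the martingale property and is insensitive to the range of the operator being merely measurable) this forces uniqueness in law for the $(A+\hat B,\Cc^\infty(\R^d),\delta_x)$-martingale problem.

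\emph{Existence.} Split $\hat B=\hat B_1+\hat B_2$ along the jump size: taking $\chi$ continuous (without loss of generality), put $\mu_s(x,dy):=\one_{\{0<|y|\le1\}}\mu(x,dy)$ and $\tilde b(x):=b(x)-\int_{\{1<|y|<2\}}y\chi(y)\,\mu(x,dy)$, and set $\hat B_1:=\hat B_{\tilde b,\mu_s}$ and $\hat B_2f(x):=\int_{\{|y|>1\}}(f(x+y)-f(x))\,\mu(x,dy)$; a direct computation gives $\hat B=\hat B_1+\hat B_2$. One checks that $\tilde b$ is bounded and Borel measurable, that $\|\mu_s\|_\beta\le\|\mu\|_\beta<\infty$, and, if $\rho\le1$, that $\tilde b=\int y\chi(y)\,\mu_s(\cdot,dy)$ (using Hypothesis~\ref{h2}(iv) for $(b,\mu)$), so $(\tilde b,\mu_s)$ again satisfies Hypothesis~\ref{h2}. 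As $\mu_s$ is supported in $\{|y|\le1\}$ it trivially satisfies \eqref{e.tightness}, so Theorem~\ref{t.perturbeuclidean}(a)--(c) and Remark~\ref{r.markovian} show that the semigroup $S_1$ generated by $\hat A+\hat B_1$ is a Markovian $\Co$-semigroup, i.e.\ a Feller semigroup; let $X$ be an associated Feller process with càdlàg paths. Since $S_1$ is a semigroup of kernel operators with full generator $\hat A+\hat B_1$, the Markov property shows that the law of $X$ started at $x$ solves the martingale problem for $\hat A+\hat B_1$, and because $(f,Af+\hat B_1f)\in\hat A+\hat B_1$ for every $f\in\Cc^\infty(\R^d)\subseteq D(A)$, it solves the $(A+\hat B_1,\Cc^\infty(\R^d),\delta_x)$-martingale problem. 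Together with the uniqueness step applied to $\hat B_1$, the $(A+\hat B_1,\Cc^\infty(\R^d))$-martingale problem is well-posed, and by Corollary~\ref{c.core} (for $\hat B_1$) the set $\{(f,Af+\hat B_1f):f\in\Cc^\infty(\R^d)\}$ is a bp-core for its full generator. Finally, $\hat B_2$ is a bounded pure-jump operator on $\Bb(\R^d)$ — its kernel $q(x,A):=\mu(x,(A-x)\cap\{|y|>1\})$ has $\sup_x q(x,\R^d)\le\|\mu\|_\beta<\infty$ — so the bounded-perturbation theory for martingale problems (\cite[Sect.~4.10]{ek}, in particular Exercise~3 on p.~261) upgrades this to well-posedness of the $(A+\hat B_1+\hat B_2,\Cc^\infty(\R^d))=(A+\hat B,\Cc^\infty(\R^d))$-martingale problem; in particular a solution started at $x$ exists. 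Combined with the uniqueness step, this proves the claim.

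I expect the main obstacle to be the uniqueness argument: propagating the martingale property from the core $\Cc^\infty(\R^d)$ up to the full, multivalued generator $\hat A+\hat B$ via bp-convergence, and then carrying out the resolvent/Laplace-transform identification of the one-dimensional marginals carefully — in particular supplying the $t$-right-continuity needed to pass from equality of Laplace transforms to equality of the marginal laws, and verifying (in the delicate regime $\rho\le1$) that the split coefficients $(\tilde b,\mu_s)$ still satisfy Hypothesis~\ref{h2}.
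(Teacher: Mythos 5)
Your proof is correct and follows essentially the same strategy as the paper: reduce to the full generator $\hat A+\hat B$ via Corollary~\ref{c.core}, obtain uniqueness from the semigroup $S$ of Theorem~\ref{t.perturbeuclidean}, and obtain existence by first handling the small-jump part (which satisfies \eqref{e.tightness} and hence yields a genuine Feller process) and then absorbing the large jumps by bounded perturbation as in \cite[Sect.\ 4.10]{ek}. The two small differences are cosmetic: for uniqueness the paper simply cites \cite[Prop.\ 4.3.1, Thm.\ 4.4.1]{ek}, whereas you unpack the bp-closedness of the martingale relation and carry out the Laplace-transform identification $\int_0^\infty e^{-\lambda t}\E^\PP[h(\omega_t)]\,dt=R(\lambda)h(x)$ by hand; and in the existence step you write out the drift correction $\tilde b=b-\int_{\{1<|y|<2\}}y\chi(y)\,\mu(\cdot,dy)$ explicitly so that $\hat B_2$ is genuinely a bounded pure-jump operator on $\Bb(\R^d)$ and $(\tilde b,\mu_s)$ demonstrably satisfies Hypothesis~\ref{h2}(iv) when $\rho\le1$, a point the paper leaves to the reader with the phrase ``taking the special structure of this operator into account.'' Both are sound; your version is a bit more self-contained.
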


\begin{proof}
By Theorem~\ref{t.perturbeuclidean}, there is a  Markovian $\Cb$-semigroup $S=(S(t))_{t \geq 0}$  with full generator $\hat{A}+\hat{B}$. By Corollary \ref{c.core}, $\{ (f, Af +\hat Bf) : f\in \Cc^\infty(\R^d)\}$ is a bp-core for $\hat A + \hat B$. By \cite[Prop.\ 4.3.1]{ek},
a measure $\PP$ on $\cD([0,\infty);\R^d)$ solves the $(A+\hat B, \Cc^\infty(\R^d))$-martingale problem if and only if it solves the
$\hat A + \hat B$-martingale problem. It follows from \cite[Thm.\ 4.4.1]{ek}, applied with $L= \overline{D(\hat A)}$, which certainly includes
$\Co(\R^d)$ and is thus separating, that uniqueness holds for the $\hat A + \hat B$-martingale problem (and thus for the $(A+\hat B, \Cc^\infty(\R^d))$-martingale problem).

It thus remains to establish existence of solutions. To that end, we first additionally assume that the tightness assumption \eqref{e.tightness} is satisfied. In this case, Theorem \ref{t.perturbeuclidean} yields that $S$ is actually a Feller semigroup and as is well known, see, e.g.\ \cite[Theorem 4.10.3]{kol}, this implies well-posedness for the martingale problem for the generator of $S|_{\Co}$. In particular, there is a solution to the $(A+\hat B, \Cc^\infty(\R^d))$-martingale problem. To remove the tightness condition, we proceed as in the proof of Theorem~\ref{t.perturbeuclidean}(b), i.e.\ we write $\hat{B}=\hat{B}_1+\hat{B}_2$ for a bounded operator $\hat{B}_2$ and a L\'evy-type operator $\hat{B}_1$, whose jumping kernels are supported in the unit ball, cf.\ \eqref{e.splitOperator}. The existence of a solution to the $(A+\hat B_1, \Cc^\infty(\R^d))$-martingale problem is clear from the first part. Since $\hat{B}_2$ is a bounded perturbation, existence of a solution to the martingale problem for $\hat{A}+\hat{B}=\hat{A}+\hat{B}_1+\hat{B}_2$ follows from \cite[Prop.\ 4.10.2]{ek}.
\end{proof}

We can now also prove continuous dependence of the solutions of our martingale problem on the `coefficients' $b$ and $\mu$. To that end, we make the same assumptions as in Lemma \ref{l.resolventconvergence.intdiff}. As a direct consequence, we obtain a convergence result for semigroups, which fits well to our earlier result on the resolvent convergence, cf.\ Proposition~\ref{p.resolventconvergence} and Corollary~\ref{c.strongerconvergence}.

\begin{thm} \label{t.sg.conv}
	 Let $(T(t))_{t \geq 0}$ be a semigroup satisfying Hypothesis~\ref{hyp.mp} for some constant $\rho>0$ and denote by $\hat{A}$ its full generator. Let $(b_n)_{n \in \N\cup\{\infty\} }$ and $(\mu_n)_{n \in \N\cup\{\infty\}}$ satisfy Hypothesis \ref{h2} for a common constant $\beta \in (0, \min \{2, \rho\})$. Denote the associated L\'evy-type operators by $\hat B_n$ for $n\in\N\cup\{\infty\}$. Assume that
		\begin{enumerate}
			\item $b_n(x) \to b_\infty(x)$ and $\mu_n(x,\cdot) \to \mu_\infty (x,\cdot)$ vaguely for every $x \in \R^d$,
			\item $\sup_{n \in \N}\left( \|b_n\|_{\infty} + \|\mu_n\|_{\beta}\right)<\infty$ with $\|\cdot\|_{\beta}$ defined in \eqref{eq-kernel.integrability}.
			\item $\sup_{x\in K}\sup_{n\in \N}\mu_n (x, \{ |y|>R\}) \to 0$ as $R\to \infty$ for $K\Subset \R^d$.
	\end{enumerate}
Then, if $\PP_n$ solves the $(A+\hat B_n, \Cc^\infty(\R^d), \delta_{x_n})$-martingale problem and $\PP_\infty$ solves the 
	$(A+\hat B_\infty, \Cc^\infty(\R^d), \delta_{x})$-martingale problem for some sequence $x_n \to x$, we have weak convergence $\PP_n \rightharpoonup \PP_\infty$.
	
	In particular, if $S_n(t)$ and $S_\infty(t)$ denote the perturbed semigroups with generator $\hat{A}+\hat{B}_n$ and $\hat{A}+\hat{B}_\infty$, respectively, then $S_n(t)f \to S_\infty(t)f$ pointwise for every $f \in \Cb(\R^d)$.
\end{thm}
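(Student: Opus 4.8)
The plan is to deduce the weak convergence of the martingale problem solutions from the resolvent convergence in Lemma~\ref{l.resolventconvergence.intdiff} by a tightness-plus-identification-of-the-limit argument, and then to read off the semigroup convergence as a corollary.

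First I would establish tightness of the family $(\PP_n)_{n \in \N}$ in $\cD([0,\infty);\R^d)$. Since each $\PP_n$ solves the $(A + \hat B_n, \Cc^\infty(\R^d), \delta_{x_n})$-martingale problem and, by Theorem~\ref{t.mp.wellposed}, this is equivalent to solving the $\hat A + \hat B_n$-martingale problem, the canonical process under $\PP_n$ is a Markov process with transition semigroup $S_n$. The uniform bounds in hypothesis~(ii) on $\|b_n\|_\infty$ and $\|\mu_n\|_\beta$, together with the strong Feller smoothing of $T$ from Hypothesis~\ref{h1b}, give uniform control: for $f \in \Cc^\infty(\R^d)$ one has $\sup_n \|\hat B_n f\|_\infty \le K\|f\|_{\Cb^\beta}$ by \eqref{eq-bn.bdd}, and since the full generator of $S_n$ contains $(f, Af + \hat B_n f)$, the process $f(\omega_t) - f(\omega_0) - \int_0^t (Af + \hat B_n f)(\omega_s)\,ds$ is a $\PP_n$-martingale with a drift term that is bounded uniformly in $n$. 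Applying the Aldous--Rebolledo criterion (or \cite[Thm.~3.9.1, Thm.~3.9.4]{ek}) to a countable separating family of such $f$, and using that $x_n \to x$ is a convergent, hence tight, sequence of initial conditions, yields relative compactness of $(\PP_n)$ in the weak topology on $\cD([0,\infty);\R^d)$. Here one should be slightly careful that the jumps are controlled; the tightness of the large jumps across compact sets in hypothesis~(iii) is exactly what is needed to rule out mass escaping to infinity in the Aldous estimate on compacta, combined with a Markov-inequality argument using $\sup_n\|\mu_n\|_\beta < \infty$ for the small jumps.

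Next I would identify every weak limit point. Let $\PP$ be the limit of a subsequence $\PP_{n_k}$. I want to show $\PP$ solves the $(A + \hat B_\infty, \Cc^\infty(\R^d))$-martingale problem, equivalently (by Theorem~\ref{t.mp.wellposed} and \cite[Prop.~4.3.1]{ek}) the $\hat A + \hat B_\infty$-martingale problem. The natural route is to pass to the limit in the resolvent/semigroup identity rather than in the generator directly. By Lemma~\ref{l.resolventconvergence.intdiff}, for large $\lambda$ we have $R_{n}(\lambda) h := (\lambda - (\hat A + \hat B_n))^{-1} h \weak R_\infty(\lambda) h$ for every $h \in \Bb(\R^d)$, and by Corollary~\ref{c.strongerconvergence} this convergence is even locally uniform. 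For a test function $f \in \Cc^\infty(\R^d)$, write $g_n := Af + \hat B_n f$; since $b_n \to b_\infty$ pointwise and $\hat B_n f(x) \to \hat B_\infty f(x)$ pointwise with a uniform-in-$n$ majorant (as in the proof of assumption~(ii) in Lemma~\ref{l.resolventconvergence.intdiff}), we get $g_n \to g_\infty$ boundedly and pointwise, hence uniformly on compacta if one also invokes the equicontinuity coming from $f \in \Cc^\infty$. Then for $F$ a bounded continuous functional on $\cD([0,t];\R^d)$ measurable w.r.t.\ $\sigma(\omega_s : s \le r)$ and $r < t$, the martingale property under $\PP_{n_k}$ reads $\E_{\PP_{n_k}}[(M^{(f,g_{n_k})}_t - M^{(f,g_{n_k})}_r)F] = 0$; using weak convergence $\PP_{n_k} \weak \PP$ for the $f$-increments and the uniform convergence $g_{n_k} \to g_\infty$ (boundedly) for the integral term, together with an argument handling the fact that $\omega \mapsto \int_r^t g_\infty(\omega_s)\,ds$ need not be continuous on Skorohod space — one restricts to $t$ outside the countable set of fixed discontinuities, i.e.\ $\PP(\omega_t \ne \omega_{t-}) = 0$, which holds for all but countably many $t$ — one obtains $\E_{\PP}[(M^{(f,g_\infty)}_t - M^{(f,g_\infty)}_r)F] = 0$. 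This shows $\PP$ solves the $(A + \hat B_\infty, \Cc^\infty(\R^d))$-martingale problem.

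By the uniqueness part of Theorem~\ref{t.mp.wellposed} and the convergence $x_n \to x$, the limit point is necessarily $\PP_\infty$, so the whole sequence converges, $\PP_n \weak \PP_\infty$. For the final assertion on semigroups: since $\PP_n$ is the law, started at $x_n$, of the Markov process with transition semigroup $S_n$, for $f \in \Cb(\R^d)$ we have $S_n(t) f(x_n) = \E_{\PP_n}[f(\omega_t)]$. Applying $\PP_n \weak \PP_\infty$ at a time $t$ of $\PP_\infty$-continuity gives $\E_{\PP_n}[f(\omega_t)] \to \E_{\PP_\infty}[f(\omega_t)] = S_\infty(t) f(x)$; a monotone-class/Dynkin argument or right-continuity in $t$ extends this to all $t \ge 0$, and running the argument with the fixed starting point $x_n \equiv x$ for every $x$ yields $S_n(t) f \to S_\infty(t) f$ pointwise. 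I expect the main obstacle to be the tightness step — in particular producing an Aldous-type modulus-of-continuity estimate that is genuinely uniform in $n$ and correctly accounts for the jump part of the L\'evy-type operators on compact subsets of $\R^d$; the identification of the limit, by contrast, is a fairly standard passage-to-the-limit once the resolvent convergence of Lemma~\ref{l.resolventconvergence.intdiff} and Corollary~\ref{c.strongerconvergence} are in hand, the only delicate point there being the Skorohod-continuity issue for the additive functional $\int_0^t g_\infty(\omega_s)\,ds$.
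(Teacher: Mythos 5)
Your tightness step is essentially the paper's: both arguments use the uniform bound $\sup_n\|\hat B_n u\|_\infty<\infty$ for $u$ in the domain of $\hat A$ (whence uniformly bounded increments of the martingales $M^{(u,g_n)}_t$) together with the compactness criteria in \cite[Thm.~4.9.4, Cor.~3.9.3]{ek}, so that part is fine.

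The identification of the limit has a genuine gap. You pass to the limit in the martingale relation for test functions $f\in\Cc^\infty(\R^d)$, with $g_n=Af+\hat B_n f$, and you claim that $g_n\to g_\infty$ uniformly on compacta ``by equicontinuity coming from $f\in\Cc^\infty$.'' This is false: $\hat B_n f(x)$ depends on $x$ through $b_n(x)$ and $\mu_n(x,\cdot)$, which are merely measurable, so $\hat B_n f$ is in general discontinuous in $x$ and the family is not equicontinuous, no matter how smooth $f$ is. Pointwise bounded convergence is all one has, and that is not enough to control $\E_{\PP_{n_k}}[\int_r^t (g_{n_k}-g_\infty)(\omega_s)\,ds\,\cdot F]$ along a varying sequence of laws. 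Separately, even after restricting to $\PP$-continuity times $t$, the functional $\omega\mapsto\int_r^t g_\infty(\omega_s)\,ds$ is not Skorohod-continuous, because the discontinuity lies in $g_\infty$ (in the state variable), not in the path; avoiding fixed-time jumps does not repair this. The paper's proof circumvents both difficulties by applying the martingale relation to $R_n(\lambda)f$ rather than to $f$: the resolvent smooths the measurable coefficients, so $R_\infty(\lambda)f$ is continuous (strong Feller), and, crucially, Corollary~\ref{c.strongerconvergence} (positivity of $R(\lambda)$ plus Dini) upgrades the $\weak$-convergence of $R_n(\lambda)f$ to locally uniform convergence, which is exactly the uniformity needed to control $\E_{\PP_n}(\Phi_n-\Phi_\infty)$ on a tight set. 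That switch from test functions to resolvents is not a cosmetic choice here; it is the step that makes the passage to the limit go through. Your final semigroup assertion (reading $S_n(t)f(x)=\E_{\PP_n^x}[f(\omega_t)]$ and letting $n\to\infty$) is fine once the weak convergence of the $\PP_n$ is established.
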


\begin{proof}
It was seen in the proof of Lemma~\ref{l.resolventconvergence.intdiff} that under the assumptions above 
$\hat B_n u \weak \hat B_\infty u$ for all $u\in D(\hat A)$. Thus, if $(u, f) \in \hat A$, we have $(u, f + \hat B_n u ) \in \hat A + \hat B_n$ and this bp-converges to $(u, f + \hat B_\infty u)$. In particular, the sequence $g_n \coloneqq f+\hat B_n u$ is uniformly bounded. By assumption,
\[
M^{(u,g_n)}_t(\omega) \coloneqq u(\omega_t) - u(\omega_0) + \int_0^t g_n(\omega_s)\, ds
\]
is a $\PP_n$-martingale. Combining \cite[Thm.\ 4.9.4]{ek} and \cite[Cor.\ 3.9.3]{ek}, it follows that the sequence $(\PP_n)$ is tight. For an alternative argument yielding the tightness, see \cite[Corollary 3.9]{kuehn-mp}.\smallskip

 We will invoke the subsequence principle, see e.g.\ \cite[Thm.\ 2.6]{bill}, to prove that $\PP_n \to \PP$ weakly, where $\PP$ is the distribution of $X$.\smallskip

To that end, observe that as a further consequence of Lemma \ref{l.resolventconvergence.intdiff}, we have
	\begin{equation*}
		R_n(\lambda) f:= (\lambda-(\hat{A}+\hat{B}_n))^{-1} f \weak R_{\infty}(\lambda)f:= (\lambda-(\hat{A}+\hat{B}))^{-1} f, \quad f \in \Bb(\R^d),
	\end{equation*}
	for large enough $\lambda$. In fact, Corollary~\ref{c.strongerconvergence} implies that the convergence is actually uniform on compact sets.
 
Now, take any subsequence of $(\PP_n)_{n \in \N}$, then, by tightness, there is a further subsequence converging to some measure, say, $\mathds{Q}$. For simplicity of notation, we denote the convergent subsequence also by $\PP_n$. For fixed $f \in \Bb(\R^d)$, $s \leq t$, $0\leq r_1 < \ldots < r_k \leq s$ and $h_j \in \Cb(\R^d)$, define
	\begin{equation*}
		\Phi_n(\omega) := \left[ R_n(\lambda) f(\omega_t) - R_n(\lambda) f(\omega_s) - \int_s^t (\lambda R_n(\lambda)f-f)(\omega_r) \, dr \right] \prod_{j=1}^k h_j(\omega_{r_j}),
	\end{equation*}
	where $n \in \N \cup \{\infty\}$. Since $R_n(\lambda)f \in D(\hat{A}+\hat{B}_n)$ and $\PP_n$ solves the martingale problem for $\hat{A}+\hat{B}_n$, we find that $\E_{\PP_n}(\Phi_n):=\int \Phi_n \, d\PP_n=0$ for all $n \in \N$. We claim that $\E_{\mathds{Q}}(\Phi_{\infty})=0$. Because of the weak convergence, it holds that $\E_{\PP_n}(\Phi_{\infty}) \to \E_{\mathds{Q}}(\Phi_{\infty})$. If we can show that
	\begin{equation}
		\lim_{n \to \infty} |\E_{\PP_n}(\Phi_n-\Phi_{\infty})| = 0,
		\label{t.sg.conv-e1}
	\end{equation}
	then it follows that $\E_{\mathds{Q}}(\Phi_{\infty})=\lim_n \E_{\PP_n}(\Phi_n)=0$. To prove \eqref{t.sg.conv-e1}, we note that, by the 
	 uniform boundedness of the resolvents, $M:=\sup_{n \in \N} \|\Phi_n\|_{\infty}+\|\Phi_{\infty}\|_{\infty}<\infty$. By tightness, given $\eps>0$ we find $K \Subset \cD([0,\infty); \R^d)$ such that $\PP_n(K^c)\leq \eps$ for all $n\in \N$. Note that $R:= \sup_{y \in K} \sup_{s \leq t} |\omega_s|<\infty$. So, by the locally uniform convergence of the resolvents, we find $N \in \N$ such that
	\begin{equation*}
		|R_n(\lambda) f(\omega_r)-R_{\infty}(\lambda) f(\omega_r)| \leq \eps
	\end{equation*}
	for all $n \geq N$, $r \in [0,t]$ and $y \in K$. Thus, For some constant $C>0$, we have
	\begin{align*}
		|\E_{\PP_n}(\Phi_n-\Phi_{\infty})|
		\leq C\eps + M \eps 
	\end{align*}
for all $n\geq N$. Hence, \begin{equation*}
		\limsup_{n \to \infty} |\E_{\PP_n}(\Phi_n-\Phi_{\infty})|
		\leq (M +C)\eps.
	\end{equation*}
	As $\eps >0$ was arbitrary, this finishes the proof of \eqref{t.sg.conv-e1} and shows that $\E_{\mathds{Q}}(\Phi_{\infty})=0$. This implies that 
	\begin{equation*}
		M_t^{(f)}(\omega) := R_{\infty} (\lambda)f(\omega_t) - \int_0^t (\lambda R_{\infty}(\lambda)f-f)(\omega_r) \, dr, \quad \omega \in \cD([0,\infty);\R^d),
	\end{equation*}
	is a martingale with respect to $\mathds{Q}$ for any $f \in \Bb(\R^d)$. Noting that
	\begin{equation*}
		\{(R_{\infty}(\lambda) f, \lambda R_{\infty}(\lambda)f-f); f \in \Bb(\R^d)\} = \hat{A}+\hat{B},
	\end{equation*}
	and $\mathds{Q}(\omega_0 \in A) = \delta_x(A)$
	this means that $\mathds{Q}$ is a solution to the $(\hat{A}+\hat{B},D(\hat{A}),\delta_x)$-martingale problem and thus, by well-posedness, $\mathds{Q}=\PP_\infty$. Hence, by the subsequence principle, $\PP_n \to \PP_\infty$ weakly. 
\end{proof}

\section{Examples} \label{s.examples}

Our perturbation results for Feller semigroups and martingale problems require two sets of assumptions: one on the original semigroup $T$ (cf.\ Hypothesis~\ref{h1b} resp.\ \ref{hyp.mp}) and one on the perturbation $\hat{B}$ (cf.\ Hypothesis~\ref{h2}). The latter ensures that $\hat{B}$ is indeed a (lower order) perturbation of the generator of $(T(t))_{t \geq 0}$ and involves the parameter $\rho \in (0,2)$ from Hypothesis \ref{h1b}, which characterizes the regularizing properties of $T$. Given this parameter $\rho$, the conditions on $\hat{B}$ are typically easy to check, and so the main work is to verify the assumptions on $T$; in particular, the regularity estimate
\begin{equation*}
	\|T(t) f\|_{\Cb^{\rho}} \leq \varphi(t) \|f\|_{\infty}, \qquad t \in (0,1),\; f \in \Bb(\R^d),
\end{equation*}
for some function $\varphi \in L^1(0,1)$. For a brief summary of some of our main results we refer to Theorem~\ref{t.generic}. In this section, we present examples of semigroups satisfying our assumptions and give some applications, e.g.\ in the theory of stochastic differential equations.

\subsection{Differential operators} \label{s.diff}

Let $\cA$ be a second order differential operators on $\R^d$ of the form
\begin{equation}
\cA f(x) \coloneqq \mathrm{tr}(Q(x)\nabla^2 f(x))
\label{e.2ndOrderOp}
\end{equation}
for $Q(x)=(q_{ij}(x)) \in \R^{d\times d}$. Our next result gives conditions which ensure that the associated semigroup satisfies the assumptions of our main results.

\begin{prop} \label{p.diff}
	Assume that the coefficients $q_{ij}$ are bounded, H\"older continuous, symmetric (i.e.\ $q_{ij} = q_{ji}$) and strictly elliptic in the sense that there exists a constant $\eta>0$ such that
	\[
	\sum_{i,j=1}^d q_{ij}(x)\xi_i\xi_j \geq \eta|\xi|^2, \quad x,\xi \in \R^d.
	\]
	Then the semigroup $T$ associated with \eqref{e.2ndOrderOp} satisfies Hypothesis~\ref{h1b} and \ref{hyp.mp} with arbitrary $\rho\in (0,2)$, and so Theorem~\ref{t.generic} is applicable (with any $\rho \in (0,2)$).
\end{prop}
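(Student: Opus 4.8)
The plan is to verify that the semigroup $T$ generated by the strictly elliptic second order operator $\cA = \tr(Q(x)\nabla^2 \cdot)$ with bounded, H\"older continuous, symmetric and uniformly elliptic coefficients satisfies each of the conditions in Hypothesis~\ref{h1b} and Hypothesis~\ref{hyp.mp}, invoking classical parabolic PDE theory. First I would recall the classical construction of the fundamental solution $p(t,x,y)$ of $\partial_t u = \cA u$ via Levi's parametrix method (see e.g.\ Friedman or the references in \cite{stroock75,J02}); under the stated assumptions on $Q$ this yields a positivity-preserving, conservative transition density satisfying Gaussian upper bounds
\begin{equation*}
	0 \leq p(t,x,y) \leq C t^{-d/2} \exp\!\left(-\frac{|x-y|^2}{Ct}\right), \qquad t \in (0,1],
\end{equation*}
together with the analogous Gaussian bounds for $\nabla_x p(t,x,y)$ (an extra factor $t^{-1/2}$) and, by Schauder-type interior estimates, for fractional derivatives of order $\rho<2$ (an extra factor $t^{-\rho/2}$). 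This immediately gives that $T(t)$ is a Markovian kernel operator, that $T(t)\Bb(\R^d)\subset \Cb^\rho(\R^d)$, and that $\|T(t)f\|_{\Cb^\rho}\leq \varphi(t)\|f\|_\infty$ with $\varphi(t)=Ct^{-\rho/2}\in L^1(0,1)$ for every $\rho\in(0,2)$, which is precisely Hypothesis~\ref{h1b}; the strong Feller property is contained in $T(t)\Bb\subset\Cb$.

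Next I would check that $T$ is a $\Cb$-semigroup in the sense of Section~\ref{sg}: invariance of $\Cb(\R^d)$ follows from the continuity of $p(t,\cdot,\cdot)$ and dominated convergence, and stochastic continuity $T(t)f\weak f$ as $t\to0$ follows from the Gaussian upper bound, which forces $p_t(x,B(x,\eps))\to1$ for every $x$ and $\eps>0$ (the criterion recalled after the definition of $\Cb$-semigroup, cf.\ \cite[Lem.\ 3.2.17]{J05}). That $T$ is a $\Co$-semigroup---i.e.\ $T(t)\Co(\R^d)\subset\Co(\R^d)$ with strong continuity there---is again standard for uniformly elliptic operators with the given coefficient regularity; one combines the Gaussian bound (giving $T(t)\Co\subset\Co$ via Lemma~\ref{l.c0}, since $T$ is positive and strong Feller) with an equicontinuity/approximation argument for strong continuity at $t=0$ on $\Cc^\infty(\R^d)$, which is dense. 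Conservativeness $T(t)\one=\one$ follows from the fact that $\one$ is a (bounded) solution of the Cauchy problem, or from the stochastic completeness of the associated diffusion under boundedness of $Q$.

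Finally, for Hypothesis~\ref{hyp.mp}(iii) I need that $\Cc^\infty(\R^d)$ is a core for the generator $A$ of $T|_{\Co(\R^d)}$. The generator is the closure of $(\cA,\Cc^\infty(\R^d))$, or equivalently $\Cc^\infty(\R^d)$ is a core: this is exactly the statement that the $(\cA,\Cc^\infty(\R^d))$-martingale problem is well-posed, which for strictly elliptic operators with bounded H\"older (even just continuous) coefficients is the classical Stroock--Varadhan theorem \cite{stroock75} (see also \cite[Theorem 2.1.43]{J02}); well-posedness of the martingale problem plus the Feller property identifies the closure of $(\cA,\Cc^\infty(\R^d))$ with $A$ via the uniqueness argument in \cite[Ch.\ 4]{ek}. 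I expect this last point---the identification of $\Cc^\infty(\R^d)$ as a core, i.e.\ that no domain is ``lost'' in passing from the test functions to the full generator---to be the only genuinely delicate step, though it is entirely classical; everything else is a matter of quoting Gaussian estimates. With Hypotheses~\ref{h1b} and \ref{hyp.mp} verified for every $\rho\in(0,2)$, Theorem~\ref{t.generic} applies directly.
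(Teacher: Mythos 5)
Your argument is substantively correct and reaches the same conclusion, but the paper takes a more streamlined route that you should compare with. Where you invoke Levi's parametrix construction and then assert Gaussian-type bounds on ``fractional derivatives of order $\rho<2$'' (an extra factor $t^{-\rho/2}$) via ``Schauder-type interior estimates,'' the paper instead cites the second step of Dynkin's Theorem 5.11 to get the single integer-order estimate
\[
\|T(t)f\|_{\Cb^2(\R^d)} \leq c\, t^{-1}\|f\|_\infty
\]
directly, and then obtains the whole scale $\|T(t)f\|_{\Cb^\rho}\leq M t^{-\rho/2}\|f\|_\infty$ for $\rho\in(0,2)\setminus\{1\}$ from the contraction estimate $\|T(t)f\|_\infty\le\|f\|_\infty$ by a \emph{real interpolation} argument (Triebel): $\Cb^\rho$ is an interpolation space between $\Cb$ and $\Cb^2$. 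This is cleaner than your appeal to Schauder-type interior estimates for ``fractional derivatives'' — that step is the vaguest part of your sketch, since the interior Schauder estimate for $\partial_t u=\cA u$ produces local H\"older bounds whose dependence on the initial data and on $t$ is not immediately of the form $t^{-\rho/2}\|f\|_\infty$, and you would still need to reduce to the $\Cb^2$ bound and interpolate to make that precise. The other difference is the core property: you route it through Stroock--Varadhan well-posedness of the $(\cA,\Cc^\infty)$-martingale problem and the identification machinery of \cite[Ch.\ 4]{ek}, whereas the paper just cites \cite[Theorem 8.1.6]{ek}, which already contains the statement that $\Cc^\infty(\R^d)$ is a core. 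Both are correct; your route is more roundabout but closer to first principles, while the paper's gets both the existence of the Feller semigroup and the core property from a single reference. I recommend you replace the informal ``Schauder-type interior estimates for fractional derivatives'' step with the explicit two-step argument: first establish the $\Cb^2$-smoothing estimate from the Gaussian bound on $\nabla^2_x p$, then interpolate.
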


\begin{proof} 
	First of all, we note that there is a Markovian Feller semigroup $T$ associated with \eqref{e.2ndOrderOp}; this follows from \cite[Theorem 5.11]{dynkin1}, see also \cite[Theorem 8.1.6]{ek}. Moreover, the second step of the proof of \cite[Theorem 5.11]{dynkin1}, with (0.40) replaced by (0.41) and (0.42) from \cite{dynkin2}, actually shows that 
	\begin{equation}
		\|T(t) f\|_{\Cb^2(\R^d)} \leq c t^{-1} \|f\|_{\infty}, \qquad f \in \Bb(\R^d). \label{e.smoothingDiff}
	\end{equation}
	Since we also have $\|T(t) f\|_{\infty} \leq \|f\|_{\infty}$ and the H\"older space $\Cb^{\gamma}(\R^d)$, $\gamma \in (0,2) \setminus \{1\}$, is a real interpolation space between $\Cb(\R^d)$ and $\Cb^2(\R^d)$, see \cite[Thm.\ 2.7.2.1]{triebel78}, an application of a classical interpolation theorem \cite[Sec.\ 1.3.3]{triebel78} yields that $\|T(t) f\|_{\Cb^{\gamma}(\R^d)} \leq M t^{-\rho/2} \|f\|_{\infty}$ for some constant $M=M(\rho)>0$ and any $f \in L^{\infty}$, $\rho \in [0,2] \setminus \{1\}$. Consequently, Hypothesis~\ref{h1b} is satisfied for any $\rho \in (0,2)$. Moreover, $\Cc^{\infty}(\R^d)$ is a core for the generator of $T|_{\Co}(\R^d)$, cf.\ \cite[Theorem 8.1.6]{ek}, and so Hypothesis~\ref{hyp.mp} holds.
\end{proof}

\begin{rem}
	 Under the milder regularity assumption that the diffusion coefficients $q_{ij}$ are uniformly continuous, one can show with some more effort that the Hypothesis~\ref{h1b} is satisfied (for any $\rho \in (0,2)$), e.g.\ using results from \cite{lunardi}. It is, however, not clear whether $\Cc^{\infty}(\R^d)$ is still a core. In dimension $d=1$, one can use that cores are preserved under random time changes, cf.\ \cite[Theorem 4.1]{FK17}, to deduce that $\Cc^{\infty}(\R)$ is indeed a core if the diffusion coefficient is (uniformly) continuous. Thus, Proposition~\ref{p.diff} and Corollary~\ref{c.diffop} below, hold in dimension $d=1$ also for uniformly continuous diffusion coefficients.
\end{rem} 

Combining Proposition~\ref{p.diff} with Theorem~\ref{t.generic}, we get the following corollary.

\begin{cor}\label{c.diffop} 
Let $\mathcal{L}$ be a L\'evy-type operator of the form 
\begin{align} \label{eq.intdiff} \begin{aligned}
\mathcal{L}f(x) &=  b(x)\cdot\nabla f (x) + \mathrm{tr}(Q(x) \nabla^2f(x)) 
\\ &\quad +
\int_{\R^d\setminus\{0\}} (f(x+y) - f(x)-y \cdot \nabla f(x) \I_{(0,1)}(|y|)) \, \mu (x, dy).
\end{aligned} \end{align}
If the drift $b$ is bounded and measurable, the diffusion coefficients are bounded, H\"older continuous, symmetric and strictly elliptic, the jumping kernels depend measurably on $x$ and there is some $\beta \in (0,2)$ such that
\begin{equation*}
	\sup_{x \in \R^d} \left( \int_{\R^d \setminus \{0\}} \min\{|y|^{\beta},1\} \, \mu(x,dy) \right)<\infty,
\end{equation*}
then there is a Markovian $\Cb$-Feller semigroup $T$ whose full generator $\hat{L}$ is the bp-closure of $\{(f,\mathcal{L}f); f \in \Cc^{\infty}(\R^d)\}$. Moreover, the $(\mathcal{L},\Cc^{\infty}(\R^d))$-martingale problem is well-posed. If additionally the family $(\mu(x,dy))_{x \in \R^d}$ is tight, then $T$ is a Feller semigroup.
\end{cor}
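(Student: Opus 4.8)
The plan is to verify that the hypotheses of Theorem \ref{t.generic} are satisfied and then simply read off the conclusions. Write $\mathcal{L} = \mathcal{A} + \hat{B}$, where $\mathcal{A}f(x) = \mathrm{tr}(Q(x)\nabla^2 f(x))$ is the pure diffusion operator of \eqref{e.2ndOrderOp} and $\hat{B}$ is the L\'evy-type operator collecting the drift and the integral term, i.e.\ $\hat{B}$ is exactly of the form \eqref{e.perturb}. First I would apply Proposition \ref{p.diff}: since the $q_{ij}$ are bounded, H\"older continuous, symmetric and strictly elliptic, the semigroup $T$ associated with $\mathcal{A}$ satisfies Hypotheses \ref{h1b} and \ref{hyp.mp} with \emph{any} $\rho \in (0,2)$, and $\Cc^\infty(\R^d)$ is a core for the generator $A$ of $T|_{\Co(\R^d)}$. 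In particular the regularity estimate $\|T(t)f\|_{\Cb^\rho} \leq \varphi(t)\|f\|_\infty$ on $(0,1)$ holds with $\varphi(t) = Mt^{-\rho/2} \in L^1(0,1)$.

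The key point in applying Theorem \ref{t.generic} is then to choose $\rho$ correctly relative to the given $\beta$. By assumption $\beta \in (0,2)$; pick any $\rho \in (\beta, 2)$ with additionally $\rho > 1$ (possible since $\beta < 2$). By Proposition \ref{p.diff}, $T$ satisfies the required hypotheses for this $\rho$. The perturbation $\hat{B}$ has bounded measurable drift $b$, jumping kernels $\mu(x,dy)$ depending measurably on $x$, and satisfies $\sup_x(|b(x)| + \int \min\{1,|y|^\beta\}\,\mu(x,dy)) < \infty$ with $\beta \in [0,\rho)$; since we arranged $\rho > 1$, the condition that the compensated drift vanish when $\rho \leq 1$ is vacuous. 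Hence all hypotheses of Theorem \ref{t.generic} are met. Parts (a) and (b) of that theorem now give that the bp-closure of $A + \hat{B} = \mathcal{L}|_{\Cc^\infty(\R^d)}$ is the full generator of a Markovian $\Cb$-Feller semigroup with the strong Feller property, and that the $(\mathcal{L}, \Cc^\infty(\R^d))$-martingale problem is well-posed. The full generator $\hat{L}$ is thus the bp-closure of $\{(f, \mathcal{L}f) : f \in \Cc^\infty(\R^d)\}$, which is the first assertion; well-posedness of the martingale problem is the second.

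For the final claim, if the family $(\mu(x,dy))_{x \in \R^d}$ is tight in the sense that $\lim_{R\to\infty}\sup_{x}\mu(x,\{|y|\geq R\}) = 0$, then Theorem \ref{t.generic}(c) applies and shows that $S$ is a $\Co$-semigroup, hence a Feller semigroup. I expect no real obstacle here: the only mild subtlety is matching the truncation function in \eqref{eq.intdiff}, which uses $\I_{(0,1)}(|y|)$, with the smooth cutoff $\chi$ used in \eqref{e.perturb} and \eqref{eq-intdiff}; but replacing $\I_{(0,1)}(|y|)$ by $\chi(y)$ only alters $\hat{B}$ by the bounded operator $f \mapsto \nabla f(x)\cdot\int y(\chi(y) - \I_{(0,1)}(|y|))\,\mu(x,dy)$, which is absorbed into a bounded, measurable modification of the drift $b$ (using $\|\mu\|_\beta < \infty$ to see the integral is bounded in $x$), so the two formulations of the operator are equivalent for the purposes of the theorem.
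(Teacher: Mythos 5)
Your proof is correct and matches the paper's approach, which simply states that the corollary follows by combining Proposition~\ref{p.diff} with Theorem~\ref{t.generic}; your explicit choice of $\rho \in (\max\{1,\beta\},2)$ (making Hypothesis~\ref{h2}(iv) vacuous) is exactly the detail the paper leaves implicit. One small correction: equation~\eqref{e.perturb} in Theorem~\ref{t.generic} uses the indicator $\I_{(0,1)}(|y|)$, not the smooth cutoff $\chi$, so there is no truncation mismatch between Corollary~\ref{c.diffop} and Theorem~\ref{t.generic} as stated — the $\chi$-versus-indicator discrepancy arises one level down (between Theorem~\ref{t.generic} and the $\chi$-based operator in Hypothesis~\ref{h2}/\eqref{eq-intdiff}) and is handled precisely as you describe, by absorbing $\int y(\chi(y)-\I_{(0,1)}(|y|))\,\mu(x,dy)$ into a bounded measurable drift.
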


As already mentioned in the introduction, the fact that second order (or even higher order) differential operators with lower order coefficients that are merely measurable generate strongly continuous semigroups on $\Co(\R^d)$ is known for a long time (see \cite{stewart}). Here, we also allow lower order \emph{integral terms} that are merely measurable, providing an extension of Taira's results on domains.  Also for the well-posedness of the martingale problem we require less regularity than e.g.\ in the classical results by Stroock \cite{stroock75}, where continuous dependence of $\mu (x, \cdot)$ on $x$ is assumed.

\subsection{L\'evy generators} \label{s.levy}

Next we consider perturbations of operators $\cA$ which are the generators of L\'evy processes,  that is, stochastic processes with c\`adl\`ag sample paths and independent and stationary increments. Recall that, by the L\'evy--Khintchine formula, any L\'evy process can be uniquely characterized (in distribution) by its characteristic exponent $\psi$ or by its L\'evy triplet $(b,Q,\nu)$, cf.\ \cite{sato} for more information. An important example is the fractional Laplacian $-(-\Delta)^{\alpha/2}$, which is the generator of the isotropic $\alpha$-stable L\'evy process. Its characteristic exponent is $\psi(\xi)=|\xi|^{\alpha}$ and the corresponding L\'evy triplet is $(0,0,\nu)$ with
\begin{equation}
\label{eq.alphastable}
\nu (U) = c_{d,\alpha} \int_{U}\frac{1}{|y|^{d+\alpha}}\, dy
\end{equation}
for a normalizing constant $c_{d, \alpha}$. 
\smallskip

In the following, we will assume that the \emph{Hartman--Wintner condition} is satisfied:
	\begin{equation}
		\lim_{|\xi| \to \infty} \frac{\Re \psi(\xi)}{\log |\xi|} = \infty. \label{hw} 
	\end{equation} 
This is a mild growth condition on $\Re \psi$ which ensures the existence of a smooth transition density $p_t$ for each $t>0$ for the associated process, cf.\ \cite{knop13} for a thorough discussion.

\begin{prop}\label{p.levy}
Let $(X_t)_{t \geq 0}$ be a L\'evy process with transition semigroup $(T(t))_{t \geq 0}$ and characteristic exponent $\psi$ satisfying the Hartman--Wintner condition
\eqref{hw}. If there is a constant $\kappa>0$ such that the transition density $p_t$ of $X_t$ satisfies
\begin{equation}\label{eq.gradientestimate}
\int_0^1\Big(\int_{\R^d} |\nabla p_t (x)|\, dx \Big)^\kappa dt < \infty.
\end{equation}
then Hypothesis~\ref{h1b} and \ref{hyp.mp} hold for any $\rho\leq\kappa$, and so Theorem~\ref{t.generic} is applicable (with $\rho\leq\kappa$).
\end{prop}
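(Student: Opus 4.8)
The plan is to read off both hypotheses from the convolution structure $T(t)f=f*\tilde p_t$ of the L\'evy semigroup, where $\tilde p_t(y):=p_t(-y)$ and $p_t$ is the transition density, combined with the smoothing furnished by \eqref{hw}. First I would recall that \eqref{hw} guarantees, for every $t>0$, a density $p_t\in\Co(\R^d)\cap C^\infty(\R^d)$ (cf.\ \cite{knop13}); in particular $p_t\in L^1(\R^d)$. Since translations act continuously on $L^1$, the estimate $|T(t)f(x+h)-T(t)f(x)|\le\|f\|_\infty\|\tilde p_t(\cdot+h)-\tilde p_t\|_{L^1}$ shows that $T(t)$ maps $\Bb(\R^d)$ into $\Cb(\R^d)$, so $T$ consists of strong Feller operators; it is of type $(1,0)$ and Markovian because $p_t$ is a probability density ($T(t)\one=\one$). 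That $T$ is a ($\Cb$- and $\Co$-)Feller semigroup is classical (cf.\ \cite{sato}): stochastic continuity is exactly $p_t(B(0,\eps)^c)\to0$ as $t\to0$, which holds because the L\'evy process is stochastically continuous. Finally, $\Cc^\infty(\R^d)$ is a core for the generator $A$ of $T|_{\Co(\R^d)}$; this is classical and can be seen by mollifying a given $u\in D(A)$ (using that $T(t)$ commutes with translations, so that $A(u*\phi_\eps)=(Au)*\phi_\eps$) and then truncating. This settles all of Hypotheses \ref{h1b} and \ref{hyp.mp} apart from the regularity estimates \ref{h1b}(i)--(ii), which is where \eqref{eq.gradientestimate} enters.

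For the regularity, the first step is to upgrade \eqref{eq.gradientestimate} to: $\nabla p_t\in L^1(\R^d)$ for \emph{every} $t>0$, with $t\mapsto\|\nabla p_t\|_{L^1}$ non-increasing. Indeed \eqref{eq.gradientestimate} forces $\|\nabla p_s\|_{L^1}<\infty$ for a.e.\ $s\in(0,1)$; choosing such an $s<t$, the semigroup identity $p_t=p_{t-s}*p_s$ gives $\nabla p_t=p_{t-s}*\nabla p_s$, hence $\|\nabla p_t\|_{L^1}\le\|\nabla p_s\|_{L^1}$, and monotonicity follows in the same way. Iterating, $\partial^\alpha p_t$ is a convolution of $|\alpha|$ first-order derivatives of $p_{t/|\alpha|}$, so $\|\partial^\alpha p_t\|_{L^1}\le\|\nabla p_{t/|\alpha|}\|_{L^1}^{|\alpha|}$. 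Writing $\rho=m+\sigma$ with $m=\lfloor\rho\rfloor$ and $\sigma\in(0,1)$ (the integer and Zygmund cases being analogous and simpler), I would then estimate $\|\partial^\alpha T(t)f\|_\infty=\|f*\partial^\alpha\tilde p_t\|_\infty\le\|f\|_\infty\|\nabla p_{t/|\alpha|}\|_{L^1}^{|\alpha|}$ for $|\alpha|\le m$, and for the top-order H\"older seminorm split $T(t)=T(t/2)\circ T(t/2)$, write $\partial^\alpha T(t)f=(T(t/2)f)*\partial^\alpha\tilde p_{t/2}$, and apply the elementary interpolation inequality $[g]_{C^\sigma}\le(2\|g\|_\infty)^{1-\sigma}\|\nabla g\|_\infty^\sigma$ to $g=T(t/2)f$; using $\|\nabla T(t/2)f\|_\infty\le\|f\|_\infty\|\nabla p_{t/2}\|_{L^1}$, the bound $\|\partial^\alpha p_{t/2}\|_{L^1}\le\|\nabla p_{t/(2m)}\|_{L^1}^m$, and the monotonicity above, this gives $[\partial^\alpha T(t)f]_{C^\sigma}\le C\|f\|_\infty\|\nabla p_{t/(2m)}\|_{L^1}^{\rho}$. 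Collecting everything, $\|T(t)f\|_{\Cb^\rho}\le\varphi(t)\|f\|_\infty$ for $t\in(0,1)$, where $\varphi(t)$ is a finite linear combination of $1$ and of terms $\|\nabla p_{t/N}\|_{L^1}^{p}$ with $N:=\max\{1,2m\}$ and exponents $p$ not exceeding $\rho$ (hence $\le\rho\le\kappa$).

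It then remains to check $\varphi\in L^1(0,1)$. Since $a^{p}\le1+a^{\kappa}$ for $0\le p\le\kappa$, one has $\varphi(t)\le C(1+\|\nabla p_{t/N}\|_{L^1}^{\kappa})$, and substituting $s=t/N$ and using that $s\mapsto\|\nabla p_s\|_{L^1}$ is non-increasing gives $\int_0^1\|\nabla p_{t/N}\|_{L^1}^{\kappa}\,dt=N\int_0^{1/N}\|\nabla p_s\|_{L^1}^{\kappa}\,ds\le N\int_0^1\|\nabla p_s\|_{L^1}^{\kappa}\,ds<\infty$ by \eqref{eq.gradientestimate}. Hence $\varphi\in L^1(0,1)$ and Hypothesis \ref{h1b} holds for every $\rho\le\kappa$; together with the first paragraph this gives Hypotheses \ref{h1b} and \ref{hyp.mp}, and Theorem \ref{t.generic} applies. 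The one point requiring genuine care is the bookkeeping in the second paragraph: the derivatives must be distributed across the semigroup via $T(t)=T(t/k)^{\circ k}$, so that every $L^1$-norm appearing is $\|\nabla p_{ct}\|_{L^1}$ raised to a power at most $\kappa$; differentiating $p_t$ directly would instead produce $\|\nabla^k p_t\|_{L^1}$, which \eqref{eq.gradientestimate} does not control. Everything else --- strong Feller via $L^1$-convolution, the Feller property, and $\Cc^\infty(\R^d)$ being a core --- consists of standard facts about L\'evy processes.
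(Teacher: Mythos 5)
Your proof is correct, and it follows the same overall architecture as the paper (strong Feller and the core property as classical facts, the $\nabla p_t$-integrability as the crux, and the $a^p\le 1+a^\kappa$ bookkeeping at the end), but the regularity estimate is obtained by a genuinely different route. The paper establishes $\|\nabla T(t)f\|_\infty\le\varphi(t)\|f\|_\infty$ with $\varphi(t)=\max\{1,\|\nabla p_t\|_{L^1}\}$, gets the second-order bound $\|\nabla^2 T(t)f\|_\infty\le d^2\varphi(t/2)^2\|f\|_\infty$ from $p_t=p_{t/2}*p_{t/2}$, and then invokes real interpolation theory ($\Cb^\gamma$ as interpolation space between $\Cb$ and $\Cb^2$, citing Triebel) to pass to the full $\Cb^\gamma$ scale at once. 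You instead distribute derivatives across $T(t)=T(t/k)^{\circ k}$ by hand, control $\|\partial^\alpha p_t\|_{L^1}\le\|\nabla p_{t/|\alpha|}\|_{L^1}^{|\alpha|}$ directly, and handle the fractional part with the elementary inequality $[g]_{C^\sigma}\le(2\|g\|_\infty)^{1-\sigma}\|\nabla g\|_\infty^\sigma$; this is more elementary and avoids any appeal to abstract interpolation. A nice feature of your write-up is that you explicitly upgrade \eqref{eq.gradientestimate} (which a priori only gives $\nabla p_t\in L^1$ for a.e.\ $t$) to $\nabla p_t\in L^1$ for every $t>0$ with $t\mapsto\|\nabla p_t\|_{L^1}$ non-increasing, via $\nabla p_t=p_{t-s}*\nabla p_s$; the paper uses this monotonicity implicitly (e.g.\ when replacing $\varphi(t)$ by $\varphi(t/2)$) without recording it. Both proofs are sound; yours trades a citation for a short self-contained argument and makes one implicit step of the paper explicit.
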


\begin{proof}
It is well known that $T(t) f(x) = \int f(x+y) p_t(y) \, dy$ is a $\Co$-semigroup and because of the existence of the density, each $T(t)$ is a strong Feller operator, see e.g.\ \cite[pp.~438--39]{J01}. Moreover, $\Cc^{\infty}(\R^d)$ is a core for the generator of $T(t)|_{\Co}$ by \cite[Cor.~2.10]{ltp3}. Denote by $\hat{A}$ the corresponding full generator. The differentiation lemma for parameter-dependent integrals entails that $T(t) f$ is differentiable for any $t>0$, $f \in L^{\infty}(\R^d)$, and
	\begin{equation*}
		\|\nabla T(t) f\|_{\infty} \leq \varphi(t) \|f\|_{\infty} \quad \text{with} \quad \varphi(t) := \max\left\{1,\int_{\R^d} |\nabla p_t(x)| \, dx\right\}.
	\end{equation*}
	As $p_{2t}$ is the convolution of $p_t$ with itself,  $\|\nabla^2 T(t) f\|_{\infty} \leq d^2 \varphi(t/2)^2 \|f\|_{\infty}$, cf.\ \cite[Lem.\ 4.1]{euler-maruyama}. Using that $\Cb^{\gamma}(\R^d)$, $\gamma \in (0,2) \setminus \{1\}$, is a real interpolation space between $\Cb(\R^d)$ and $\Cb^2(\R^d)$, see \cite[Thm.\ 2.7.2.1]{triebel78}, and applying an interpolation theorem \cite[Sec.\ 1.3.3]{triebel78}, we find that $\|T(t) f\|_{\Cb^{\gamma}(\R^d)} \leq M \varphi(t/2)^{\gamma} \|f\|_{\infty}$ for some constant $M=M(\gamma)>0$ and any $f \in L^{\infty}$, $\gamma \in [0,2]$. 
\end{proof} 

Gradient estimates of the form \eqref{eq.gradientestimate} for L\'evy processes have been studied intensively in the last decade, e.g.\ \cite{szcz17,kaleta15,lm6,ryznar15,ssw} to mention but a few, and so Proposition~\ref{p.levy} applies to a wide class of L\'evy generators. Using the gradient estimates from \cite[Ex.\ 1.5]{ssw}, we find that the assumptions of Proposition~\ref{p.levy} are satisfied for L\'evy measures which are dominated from below by a stable measure.

\begin{example} \label{ex.frac.laplacian}
	Let $(T(t))_{t \geq 0}$ be the semigroup of a pure-jump L\'evy process with L\'evy measure $\nu$ satisfying
	\begin{equation*}
		\nu(U) \geq \int_0^{r_0} \int_{\mathbb{S}} \I_U(r \theta) r^{-1-\alpha} \, dr \, \varrho(d\theta), \quad U \in B(\R^d \setminus \{0\}),
	\end{equation*}
	for some constants $\alpha \in (0,2)$, $r_0>0$ and a measure $\varrho$ on the unit sphere $\mathbb{S}$ whose support is not contained in any proper linear subspace of $\R^d$. Then the assumptions of Proposition~\ref{p.levy} are satisfied (with $\kappa<\alpha$).
\end{example}

Example~\ref{ex.frac.laplacian} covers, in particular,  stable operators, i.e.\ operators \eqref{eq.generalop} with $b=0$, $Q=0$ and L\'evy measures $\nu$ of the form $\nu(U) = \int_0^{\infty} \int_{\mathbb{S}} \I_U(r \theta) r^{-1-\alpha} \, dr \, \varrho(d\theta)$. Let us mention that Komatsu \cite{komatsu84} was one of the first to study perturbations of stable operators under L\'evy-type operators; however, his result requires quite strong assumptions on the regularity the density of $\nu$. Moreover, we would like to mention a closely related result by Peng \cite{peng} who considers perturbation of the fractional Laplacian by  time-dependent kernels $\mu(t,x,dy)$. Our approach has the advantage that it is not restricted to stable operators but works in a much more general setting, as demonstrated by Proposition \ref{p.levy}

\subsection{Operators of variable order} \label{s.varorder}

Our next application concerns perturbations of the fractional Laplacian of variable order $-(-\Delta)^{\alpha(\cdot)/2}$, which is defined on $\Cc^{\infty}(\R^d)$ by
\begin{equation*}
	-(-\Delta)^{\alpha(\cdot)/2} f(x) = c_{d,\alpha(x)} \int_{\{y \neq 0\}} (f(x+y)-f(x)-y \cdot \nabla f(x) \I_{(0,1)}(|y|)) \frac{1}{|y|^{d+\alpha(x)}} \, dy
\end{equation*}
for a certain normalizing constant $c_{d,\alpha(x)}$. This operator appears as generator of so-called processes of variable order, see e.g.\ \cite{bass,kol,lm6} for the construction of such processes and more information about the probabilistic background. Denote by $(T(t))_{t \geq 0}$ the associated $\Co$-Feller semigroup. 

\begin{prop} \label{p.variableorder}
	If $\alpha: \R^d \to (0,2]$ is H\"older continuous with $\alpha_L := \inf_x \alpha(x)>0$,  then $(T(t))_{t \geq0}$ satisfies Hypothesis~\ref{h1b} and \ref{hyp.mp} with $\rho=\alpha_L$, and so Theorem~\ref{t.generic} is applicable (with $\rho=\alpha_L$).
\end{prop}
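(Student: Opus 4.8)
The plan is to verify the three conditions in Hypothesis~\ref{hyp.mp} for the semigroup $(T(t))_{t\geq 0}$ associated with $-(-\Delta)^{\alpha(\cdot)/2}$, with $\rho=\alpha_L$. Conditions (ii) and (iii) -- that $(T(t))_{t\geq 0}$ is a Markovian $\Co$-semigroup and that $\Cc^\infty(\R^d)$ is a core for the generator of $T|_{\Co(\R^d)}$ -- are precisely the content of the construction of variable order processes available in the literature cited before the proposition (e.g.\ \cite{bass,kol,lm6}); I would simply invoke that. The real work is Hypothesis~\ref{h1b}: I must show that each $T(t)$ is a strong Feller operator mapping $\Bb(\R^d)$ into $\Cb^{\alpha_L}(\R^d)$, with the quantitative estimate $\|T(t)f\|_{\Cb^{\alpha_L}}\leq\varphi(t)\|f\|_\infty$ for $t\in(0,1)$ and some $\varphi\in L^1(0,1)$.

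First I would recall that under the stated assumptions ($\alpha$ H\"older continuous, $\alpha_L>0$) the operator $-(-\Delta)^{\alpha(\cdot)/2}$ admits a transition density $p_t(x,y)$ which is smooth in the spatial variables and enjoys good upper bounds; these heat kernel estimates for stable-like operators of variable order are classical -- I would cite Bass and/or Chen--Kumagai and/or the survey material in \cite{lm6}. The key ingredients are the on-diagonal/off-diagonal bounds $p_t(x,y)\lesssim t^{-d/\alpha(x)}\wedge\frac{t}{|x-y|^{d+\alpha(x)}}$ together with the gradient estimate $|\nabla_x p_t(x,y)|\lesssim t^{-(d+1)/\alpha_L}\wedge(\text{analogous tail})$ valid for small $t$. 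From such bounds one deduces, for $0<t<1$, that $\|\nabla T(t)f\|_\infty\leq C t^{-1/\alpha_L}\|f\|_\infty$ and more generally a H\"older seminorm bound. The cleanest route to the full $\Cb^{\alpha_L}$ estimate is the same interpolation argument used in the proofs of Proposition~\ref{p.diff} and Proposition~\ref{p.levy}: establish $\|T(t)f\|_{\Cb^\gamma}\leq Ct^{-\gamma/\alpha_L}\|f\|_\infty$ for each $\gamma\in(0,2)\setminus\{1\}$ by real interpolation between the trivial bound on $\Cb(\R^d)$ and a bound on $\Cb^2(\R^d)$ (the latter obtained from the second-order kernel estimate, using that $p_{2t}(x,\cdot)$ relates to a composition of the semigroup, as in \cite[Lem.~4.1]{euler-maruyama}), then specialize to $\gamma=\alpha_L$ if $\alpha_L\neq 1$, and handle $\alpha_L=1$ by picking $\gamma$ slightly above and using $\Cb^{\gamma}\hookrightarrow\Cb^1$, or by a direct argument. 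Since $\alpha_L/\alpha_L=1<2$ but we need integrability near zero, note $t^{-\rho/\alpha_L}=t^{-1}$ is \emph{not} integrable, so I would instead only need the estimate for $\rho=\alpha_L$ with exponent strictly less than~$1$; this is automatic because in Hypothesis~\ref{h1b} we may take any $\rho'<\alpha_L$ — wait, we need $\rho=\alpha_L$ itself. Here one exploits that the gradient/H\"older bound actually comes with exponent $t^{-\alpha_L/\alpha(x)}$ which near the relevant regime is better; more robustly, one takes $\varphi(t)=Ct^{-\theta}$ with $\theta<1$ by choosing the H\"older exponent of the target space slightly below $\alpha_L$ and observing $\Cb^{\alpha_L}$ regularity still follows from the stronger small-$t$ smoothing, or one simply verifies the literature gives $\|T(t)f\|_{\Cb^{\alpha_L}}\lesssim t^{-\vartheta}$ for some $\vartheta\in(0,1)$ directly from the variable-order heat-kernel gradient bounds of \cite{lm6}.

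The main obstacle is therefore purely analytic: extracting from the variable-order heat kernel estimates a smoothing bound $\|T(t)f\|_{\Cb^{\alpha_L}}\leq\varphi(t)\|f\|_\infty$ with $\varphi\in L^1(0,1)$, i.e.\ with a non-integrable-at-first-glance power that must be tamed. The resolution is that the natural smoothing rate for order-$\alpha_L$ regularity is $t^{-\rho/\alpha_L}$ only in the non-uniform setting; for the \emph{uniform} H\"older space $\Cb^{\alpha_L}$ one gets an extra room from the strict ellipticity $\alpha_L>0$ and the H\"older continuity of $x\mapsto\alpha(x)$, which prevents the worst-case exponent from being attained — concretely one proves $\|T(t)f\|_{\Cb^{\alpha_L}}\leq Ct^{-\alpha_L/2}\|f\|_\infty$ (or any exponent in $(0,1)$) by interpolating as above but choosing the interpolation parameter so the resulting power is $<1$, which is possible precisely because $\alpha_L\in(0,2)$ gives $\alpha_L/2\in(0,1)$. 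Once Hypothesis~\ref{h1b} is in hand with this $\varphi$, the conclusion that Theorem~\ref{t.generic} applies with $\rho=\alpha_L$ is immediate: conditions (ii) and (iii) of Hypothesis~\ref{hyp.mp} were already noted, and the perturbation hypotheses in Theorem~\ref{t.generic} involve only the parameter $\beta\in[0,\alpha_L)$, which is exactly what Hypothesis~\ref{h2}(iii) demands.
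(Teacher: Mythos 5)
Your high-level plan (cite the construction literature for Hypothesis \ref{hyp.mp}(ii),(iii), then establish the $\Cb^\rho$-smoothing estimate for Hypothesis \ref{h1b}) matches the paper's. The paper verifies the estimate not by interpolating heat-kernel bounds but by directly citing \cite[Prop.~6.1]{reg-feller}, which gives
$\|T(t)f\|_{\Cb^\beta(\R^d)} \leq c_\beta\, t^{-\beta/\alpha_L}\|f\|_\infty$
\emph{for $\beta$ strictly less than $\alpha_L$}; the exponent $\beta/\alpha_L$ is then $<1$ and the bound is integrable. The statement ``$\rho=\alpha_L$'' should therefore be read as ``any $\rho<\alpha_L$'', which suffices because Hypothesis \ref{h2}(iii) only requires $\beta<\rho$ -- letting $\rho\uparrow\alpha_L$ covers all perturbation exponents $\beta\in[0,\alpha_L)$ as in Theorem \ref{t.generic}.

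The last paragraph of your proposal, however, contains a genuine error. You attempt to justify $\|T(t)f\|_{\Cb^{\alpha_L}}\leq C t^{-\alpha_L/2}\|f\|_\infty$ ``by interpolating as above but choosing the interpolation parameter so the resulting power is $<1$, which is possible precisely because $\alpha_L\in(0,2)$ gives $\alpha_L/2\in(0,1)$.'' This is not correct: the interpolation index is not a free parameter. If the endpoint bound on $\Cb^2$ scales like $t^{-2/\alpha_L}$ (the natural stable-like scaling), real interpolation against the trivial $\Cb$-bound yields $\|T(t)f\|_{\Cb^\gamma}\lesssim t^{-\gamma/\alpha_L}\|f\|_\infty$, which at $\gamma=\alpha_L$ is $t^{-1}$ and not integrable. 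The exponent $t^{-\gamma/2}$ you invoke is the diffusive scaling of Proposition \ref{p.diff} and does not apply here. There is no ``extra room'' from Hölder continuity of $\alpha(\cdot)$: the cited Schauder estimate is of the stated form only for $\beta<\alpha_L$, and that is exactly what the paper uses. You had the right idea mid-proposal (take $\rho$ slightly below $\alpha_L$) but discarded it; that is in fact the correct resolution, and no $\Cb^{\alpha_L}$-level bound is claimed or needed.
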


\begin{proof}
	Let $(T(t))_{t \geq 0}$ be the Markovian Feller semigroup associated with the operator $-(-\Delta)^{\alpha(\cdot)/2}$, see e.g.\ \cite{bass} or \cite[Thm.\ 5.2]{lm6} for the existence, and denote by $\hat{A}$ its full generator and by $A$ its part in $\Co(\R^d)$. It is known from the construction of the semigroup that $\Cc^{\infty}(\R^d)$ is a core and that $T(t) f(x) = \int f(y) p_t(x,y) \, dy$ for a bounded transition function $p_t$, cf.\ \cite{lm6}. In consequence, by \cite[Thm.\ 1.9, Thm.\ 1.14]{ltp3}, each $T(t)$ is a strong Feller operator. Moreover, by \cite[Prop.\ 6.1]{reg-feller}, 
	\begin{equation*}
		\|T(t) f\|_{\Cb^{\beta}(\R^d)} \leq c_{\beta} t^{-\beta/\alpha_L} \|f\|_{\infty}, \qquad f \in \Bb(\R^d),\;t \in (0,1),
	\end{equation*}
	for any $\beta<\alpha_L$. In conclusion, Hypothesis \ref{hyp.mp} is satisfied.
\end{proof}

\begin{example}
Consider
\[
\cL u = -(-\Delta)^{\alpha(\cdot)/2}u - (-\Delta)^{\beta(\cdot)/2}
\]
where $\alpha: \R^d\to (0,2]$ is H\"older continuous with $\alpha_L \coloneqq \inf_x \alpha (x) > 0$ and $\beta: \R^d\to (0,2)$ is a measurable function with $\sup_x \beta(x) < \alpha_L$. Then, by Proposition~\ref{p.variableorder} and Theorem~\ref{t.generic}, the bp-closure of $\{(f,\cL f); f \in \Cc^{\infty}(\R^d)\}$ is the generator of a Markovian $\Cb$-Feller semigroup $S$ and the $(\cL,\Cc^{\infty}(\R^d))$-martingale problem is well-posed. If additionally $\inf_x \beta(x)>0$, then  the tightness condition \eqref{e.tightness} is satisfied and $S$ is a $\Co$-Feller semigroup.

Let us mention that Theorem \ref{t.sg.conv} also yields continuous dependence of these solutions on the `coefficient' $\beta$. More precisely, 
if $\beta_n$, $n \in \N$ are measurable functions such that $\beta_n(x) \to \beta(x)$ for all $x \in \R^d$ and  $0<\inf_n \inf_x \beta(x) \leq \sup_n \sup_x \beta_n(x)<\alpha_L$, then $\PP^{(\beta_n,x)} \to \PP^{(\beta,x)}$ weakly for every $x \in \R^d$. In particular, the associated semigroups satisfy $S^{(\beta_n)}(t)f(x) \to S^{(\beta)}(t)f(x)$ for any $t>0$, $x\in \R^d$ and $f \in \Cb(\R^d)$.
\end{example}

Proposition~\ref{p.variableorder} can be extended to a wider class of non-local operators, that is, we can replace the fractional Laplacian of variable order $-(-\Delta)^{\alpha(\cdot)/2}$ by other operators of variable order, see \cite[Section 5.1]{lm6} for some interesting choices.

\subsection{Non-local operators and L\'evy-driven SDEs} \label{s.sde}

In this subsection, we show how our perturbation results can be applied in the theory of stochastic differential equations (SDEs). The key is a result by Kurtz \cite{kurtz} which states that the existence of solutions to L\'evy-driven SDEs can be equivalently characterized by the existence of solutions to martingale problems associated with certain L\'evy-type operators \eqref{eq.generalop}. For a comprehensive study of L\'evy-driven SDEs, e.g.\ classical existence and uniqueness results, we refer to Ikeda--Watanabe \cite{ikeda} and Protter \cite{protter}. Our first result concerns SDEs with additive L\'evy noise.

\begin{prop} \label{p.levy-drift}
	Let $(L_t)_{t \geq 0}$ be a L\'evy process with characteristic exponent $\psi$ satisfying the Hartman--Wintner condition \eqref{hw}. If the transition density $p_t$ satisfies $\int_0^1 \int_{\R^d} |\nabla p_t(x)| \ dx \, dt < \infty$, then the SDE
	\begin{equation}
		dX_t = b(X_t) \, dt + dL_t, \qquad X_0=x, \label{sde-levy}
	\end{equation} 
	has a unique weak solution. Moreover, the unique solution does not explode in finite time and gives rise to a Markovian Feller semigroup with symbol 
	\begin{equation*}
		q(x,\xi) = -i b(x) \cdot \xi + \psi(\xi), \qquad x,\xi \in \R^d.
	\end{equation*} 
\end{prop}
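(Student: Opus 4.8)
The plan is to realise the pseudo-differential operator with symbol $q(x,\xi)=-ib(x)\cdot\xi+\psi(\xi)$ as the drift perturbation $\hat{B}f=b\cdot\nabla f$ of the generator $\mathcal{A}$ of $(L_t)_{t\geq0}$, to construct the associated Feller semigroup $S$ by the machinery of Sections~\ref{pertFeller}--\ref{perLTO}, to obtain well-posedness of the martingale problem as in Section~\ref{mp}, and finally to pass to the SDE via Kurtz's theorem \cite{kurtz}. First I would check the hypotheses on the unperturbed semigroup $T=(T(t))_{t\geq0}$ of $(L_t)$. It is a Markovian $\Co$-semigroup, the Hartman--Wintner condition \eqref{hw} provides a smooth transition density $p_t$ so that each $T(t)$ maps $\Bb(\R^d)$ into $\Cb(\R^d)$, i.e.\ is strong Feller, and $\Cc^\infty(\R^d)$ is a core for the $\Co$-generator $A$ by \cite[Cor.~2.10]{ltp3}. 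By the differentiation lemma for parameter-dependent integrals, $T(t)f$ is continuously differentiable with $\nabla T(t)f=-\int f(\cdot+z)\nabla p_t(z)\,dz$ and $\|\nabla T(t)f\|_\infty\leq g(t)\|f\|_\infty$, where $g(t):=\int_{\R^d}|\nabla p_t(x)|\,dx$; moreover $g$ is non-increasing because $\nabla p_{s+t}=(\nabla p_s)*p_t$, and $g\in L^1(0,1)$ by assumption. Together with $\|T(t)f\|_\infty\leq\|f\|_\infty$ this shows that $T(t)$ maps $\Bb(\R^d)$ into $C^1_b(\R^d)$ with $\|T(t)f\|_{C^1_b}\leq(1+g(t))\|f\|_\infty$, which is the analogue of the regularity estimate of Hypothesis~\ref{h1b}; note that $C^1_b(\R^d)$ is precisely the space on which a first-order operator such as $\hat{B}$ must act.

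With $\hat{B}f=b\cdot\nabla f$ -- the L\'evy-type operator \eqref{eq-intdiff} with drift $b$ and jump kernel $\mu\equiv0$, so that the tightness condition \eqref{e.tightness} is vacuous -- Hypothesis~\ref{h1} is verified directly: $\|\hat{B}T(t)\|\leq\|b\|_\infty g(t)$ is integrable near $0$, and $\hat{B}T(t)$, $\hat{B}R(\lambda)$ are $\sigma$-continuous kernel operators by dominated convergence (with majorants $\|b\|_\infty|\nabla p_t|$, resp.\ $\|b\|_\infty e^{-\lambda t}g(t)$ for $\lambda$ large). Theorem~\ref{t.k13} then yields a $\Cb$-semigroup $S$ of strong Feller operators with full generator $\hat{A}+\hat{B}$. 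Positivity and contractivity of $S$ follow by approximating $b$ in the bp-sense by $b_n\in\Cc(\R^d;\R^d)$ with $\sup_n\|b_n\|_\infty<\infty$: for continuous $b_n$ the operator $A+\hat{B}_n$ on $\Co(\R^d)$ is densely defined, $\lambda-(A+\hat{B}_n)$ is surjective for large $\lambda$ (Neumann series, since $\|\hat{B}_nR(\lambda)\|_{\cL(\Co)}\leq\|b_n\|_\infty C(\lambda)\to0$), and the positive maximum principle holds (at a non-negative maximum $x_0$ of $f\in D(A)\subset C^1_b(\R^d)$ one has $\nabla f(x_0)=0$, so $\hat{B}_nf(x_0)=0$, while $Af(x_0)\leq0$), so $A+\hat{B}_n$ generates a sub-Markovian $\Co$-semigroup, which must coincide with $S_{b_n}$; Proposition~\ref{p.resolventconvergence} and Corollary~\ref{c.stability} then transfer contractivity and positivity to $S$. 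Since $f\mapsto\nabla T(t)f=-\int f(\cdot+z)\nabla p_t(z)\,dz$ maps $\Bo(\R^d)$ into itself, so does $\hat{B}T(t)$, and Theorem~\ref{t.perturbFeller}(b) upgrades $S$ to a $\Co$-, hence Feller, semigroup; $\hat{B}\one=0=\hat{A}\one$ and Remark~\ref{r.markovian} give conservativeness, and evaluating $\hat{A}+\hat{B}$ on $\Cc^\infty(\R^d)$ identifies the symbol as $q(x,\xi)$. For the SDE: Lemma~\ref{l.bp} and Theorem~\ref{t.perturbFeller}(a) show that $\{(f,Af+\hat{B}f):f\in\Cc^\infty(\R^d)\}$ is a bp-core for $\hat{A}+\hat{B}$, so the $(A+\hat{B},\Cc^\infty(\R^d))$-martingale problem agrees with the $(\hat{A}+\hat{B},D(\hat{A}))$-martingale problem \cite[Prop.~4.3.1]{ek}, which is well-posed since $S$ is a Feller semigroup, exactly as in the proof of Theorem~\ref{t.mp.wellposed}. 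Kurtz's theorem \cite{kurtz} now yields existence and uniqueness in law of weak solutions to \eqref{sde-levy}; non-explosion is automatic as the martingale problem is posed on $\cD([0,\infty);\R^d)$ with the conservative semigroup $S$.

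The main obstacle is the borderline regularity. The bound $\int_0^1\int_{\R^d}|\nabla p_t(x)|\,dx\,dt<\infty$ corresponds only to the exponent $\rho=1$ (there is no reason for it to yield $\int_0^1 g(t)^\kappa\,dt<\infty$ with $\kappa>1$), so Hypothesis~\ref{h2}(iv) -- which for $\mu\equiv0$ would force $b\equiv0$ -- cannot be satisfied, and Theorems~\ref{t.perturbeuclidean} and~\ref{t.mp.wellposed} are not directly applicable. The resolution, sketched above, is to rerun their proofs with the $C^1_b$-regularisation of $T$ playing the role of the $\Cb^\rho$-regularisation with $\rho>1$: this is exactly the regularity making $b\cdot\nabla$ meaningful on the ranges of $T(t)$ and $R(\lambda)$, and the verification of Hypothesis~\ref{h1}, the positive-maximum-principle argument for sub-Markovianity, and the $\Bo$-invariance argument behind the $\Co$-property all survive unchanged. (If, in a concrete situation, the density happens to satisfy the stronger $\int_0^1 g(t)^\kappa\,dt<\infty$ for some $\kappa>1$, then by Proposition~\ref{p.levy} one has Hypothesis~\ref{h1b} with $\rho=\kappa>1$ and Theorems~\ref{t.perturbeuclidean} and~\ref{t.mp.wellposed} apply verbatim.)
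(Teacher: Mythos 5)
Your proposal is correct, and you have in fact spotted a genuine gap in the paper's own proof. The paper simply cites Proposition~\ref{p.levy} with $\kappa=1$ and then Kurtz's theorem, but Proposition~\ref{p.levy} with $\kappa=1$ only delivers Hypothesis~\ref{h1b} for $\rho\leq 1$, and in that regime Hypothesis~\ref{h2}(iv) (equivalently, the hypothesis of Theorem~\ref{t.generic}) demands vanishing compensated drift, which for $\mu\equiv 0$ forces $b\equiv 0$. So the paper's stated chain of citations does not cover a genuine drift. You correctly identified the reason: the abstract framework excludes $\rho=1$ for drift perturbations because the Arzel\`a--Ascoli step in the proof of Theorem~\ref{t.perturbeuclidean}(a) needs equicontinuity of the gradients, which $\Cb^1$-boundedness alone does not provide. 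Your fix is the right one, and it is not a verbatim application of the paper's theorems but a rerun of the proofs exploiting the convolution structure of $T$: the identity $\nabla T(t)f=-\int f(\cdot+z)\nabla p_t(z)\,dz$ gives the $\sigma$-continuity of $\hat B T(t)$ and $\hat BR(\lambda)$ directly by dominated convergence, and also the $\Bo$-invariance needed for Theorem~\ref{t.perturbFeller}(b); the positive-maximum-principle argument survives because $D(A)\subset C^1_b$ still ensures $\nabla f(x_0)=0$ at an interior maximum; positivity and contractivity then transfer by the bp-approximation of $b$ and Proposition~\ref{p.resolventconvergence}/Corollary~\ref{c.stability}. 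One small point worth flagging: the non-increase of $g(t)=\int|\nabla p_t|\,dx$ via $\nabla p_{s+t}=(\nabla p_s)*p_t$ is a nice observation but not logically needed -- the construction of the extension $\psi$ in Lemma~\ref{l.hoelderdomain} already handles non-monotone $\varphi$. The parenthetical remark at the end (use Proposition~\ref{p.levy} with $\rho=\kappa>1$ when $g^\kappa\in L^1$ for some $\kappa>1$) is accurate, and in practice most examples have this extra integrability, which may be why the authors overlooked the borderline case.
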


\begin{proof}
This is immediate from Proposition \ref{p.levy} (with $\kappa=1$) and the results of \cite{kurtz}. 
\end{proof}

Proposition~\ref{p.levy-drift} applies, in particular, to any L\'evy process with characteristic exponent $\psi$ satisfying $\Re \psi(\xi) \geq c |\xi|^{\alpha}$, $|\xi| \gg 1$, for some $\alpha>1$; indeed, \eqref{hw} is then trivial and the integrability condition on $\nabla p_t$ follows from \cite[Thm.\ 3.2]{ssw}. Let us mention that Tanaka et.\ al \cite[pp.~82,83]{tanaka} obtained the existence of a unique solution under the milder assumption that $\frac{1}{\Re \psi(\xi)} = o\left(|\xi|^{-1}\right)$ as $|\xi| \to \infty$. Furthermore, there is ongoing research about the optimal assumptions on the drift $b$ to ensure the existence of a unique weak solution to \eqref{sde-levy} for a given L\'evy process; e.g.\  \cite{chen15} considers isotropic $\alpha$-stable drivers, $\alpha>1$, and (generally unbounded) drifts $b$ from some Kato class.

\begin{prop} \label{p.sde-drift}
	Let $(L_t)_{t \geq 0}$ be a one-dimensional isotropic $\alpha$-stable L\'evy process for $\alpha \in (1,2)$, i.e.\ $(L_t)_{t \geq 0}$ is a L\'evy process with characteristic exponent $\psi(\xi)=|\xi|^{\alpha}$. Let $\sigma: \R \to \R$ be a H\"older continuous function with $0<\inf_x \sigma(x) \leq \sup_x \sigma(x)<\infty$. For every $b \in \Bb(\R)$, there is a unique weak solution to the SDE
	\begin{equation}
		dX_t = b(X_{t-}) \, dt + \sigma(X_{t-}) \, dL_t, \qquad X_0 = x. \label{eq.sde-drift}
	\end{equation}
	The unique solution does not explode in finite time and gives rise to a Markovian $\Co$-Feller semigroup with symbol
	\begin{equation*}
		q(x,\xi) = -ib(x) \cdot \xi + |\sigma(x)|^{\alpha} |\xi|^{\alpha}.
	\end{equation*}
\end{prop}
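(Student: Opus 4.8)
The plan is to realize the L\'evy-type operator carrying the symbol $q$ as a bounded, lower-order (pure-drift) perturbation of a variable-coefficient stable-like operator, apply Theorem~\ref{t.generic}, and then translate the resulting well-posed martingale problem into the SDE statement via Kurtz's equivalence \cite{kurtz}. Concretely, I would set
$\cL_0 f(x) \coloneqq |\sigma(x)|^{\alpha}\,(-(-\Delta)^{\alpha/2})f(x)$, which is a L\'evy-type operator with symbol $|\sigma(x)|^{\alpha}|\xi|^{\alpha}$, and $\hat B f(x) \coloneqq b(x)\nabla f(x)$, which is of the form \eqref{e.perturb} with jumping kernels $\mu\equiv 0$; after the change of variables $y=\sigma(x)z$ in the jump representation of $\sigma(X_{t-})\,dL_t$ one checks that on $\Cc^\infty(\R)$ the operator $\cL\coloneqq\cL_0+\hat B$ is exactly the pseudo-differential operator with symbol $q(x,\xi)=-ib(x)\xi+|\sigma(x)|^{\alpha}|\xi|^{\alpha}$ (the symmetric jump kernel contributes nothing to the $\xi$-linear term, so only $b$ survives there).

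First I would verify that the semigroup $T$ generated by $\cL_0$ satisfies Hypotheses~\ref{h1b} and \ref{hyp.mp}. Since $\sigma$ is H\"older continuous with $0<\inf_x\sigma(x)\le\sup_x\sigma(x)<\infty$, the amplitude $|\sigma(\cdot)|^{\alpha}$ is H\"older continuous and bounded above and below, so $\cL_0$ is a non-degenerate stable-like operator of constant order $\alpha$; by the existence theory for such operators (see e.g.\ \cite{bass}, \cite[Sect.~5]{lm6} and \cite{ltp3}) it generates a conservative Feller semigroup $T$ with transition density $p_t(x,y)$, each $T(t)$ is a strong Feller operator, and $\Cc^\infty(\R)$ is a core for its generator. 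From the heat-kernel gradient estimate $\int_\R|\nabla_x p_t(x,y)|\,dy\le c\,t^{-1/\alpha}$ for $t\in(0,1)$, together with the Chapman--Kolmogorov iteration and the interpolation argument already used in the proofs of Propositions~\ref{p.levy} and \ref{p.variableorder} (compare also \cite[Prop.~6.1]{reg-feller}), one obtains $\|T(t)f\|_{\Cb^{\rho}}\le c\,t^{-\rho/\alpha}\|f\|_\infty$ for $t\in(0,1)$, $f\in\Bb(\R)$ and every $\rho\in(0,\alpha)$; since $t\mapsto t^{-\rho/\alpha}$ lies in $L^1(0,1)$ precisely when $\rho<\alpha$, Hypothesis~\ref{h1b} holds for all such $\rho$. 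Because $\alpha>1$ --- and this is exactly where the hypothesis $\alpha\in(1,2)$ enters, since $\hat B$ carries a genuine drift --- I would then fix some $\rho\in(1,\alpha)$, so that Hypothesis~\ref{hyp.mp} holds as well.

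Next I would apply Theorem~\ref{t.generic} to this $T$ and to the perturbation $\hat B$. With $\mu\equiv0$ and $b$ bounded and Borel measurable, the bound $\sup_x\big(|b(x)|+\int\min\{1,|y|^{\beta}\}\,\mu(x,dy)\big)<\infty$ holds for any $\beta\in[1,\rho)$; since $\rho>1$ the requirement that the compensated drift vanish is vacuous, and the tightness condition of Theorem~\ref{t.generic}(c) is trivially satisfied because $\mu\equiv0$. Hence Theorem~\ref{t.generic} yields that the bp-closure of $\cL_0+\hat B$ is the full generator of a Markovian $\Co$-Feller, hence Feller, semigroup $S$ --- Markovianity following from Remark~\ref{r.markovian} as $\hat B\one=0$ and $T$ is conservative --- and that the $(\cL,\Cc^\infty(\R))$-martingale problem is well-posed. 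Since the generator of $S|_{\Co(\R)}$ extends $\cL$ on the core $\Cc^\infty(\R)$, the semigroup $S$ has symbol $q$.

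Finally I would invoke Kurtz's theorem \cite{kurtz}: the SDE \eqref{eq.sde-drift} started at $x$ admits a weak solution if and only if the $(\cL,\Cc^\infty(\R),\delta_x)$-martingale problem has a solution, and uniqueness in law for the SDE is equivalent to uniqueness for that martingale problem; both hold by the previous step, so \eqref{eq.sde-drift} has a unique weak solution, realized as the canonical process under the family $(\PP^x)_{x\in\R}$ of martingale-problem solutions. These measures live on $\cD([0,\infty);\R)$, so the solution does not explode in finite time (equivalently, $S$ is conservative), and its transition semigroup is $S$, with symbol $q$. I expect the main obstacle to be the first step --- establishing Hypotheses~\ref{h1b} and \ref{hyp.mp} for the variable-coefficient stable-like operator $\cL_0$, in particular distilling the $\Cb^{\rho}$-regularization estimate $\|T(t)f\|_{\Cb^\rho}\le\varphi(t)\|f\|_\infty$ from heat-kernel gradient bounds --- whereas the remaining steps are essentially bookkeeping with Theorem~\ref{t.generic} and the SDE/martingale-problem dictionary of \cite{kurtz}.
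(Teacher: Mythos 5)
Your plan follows the paper's route closely: build the Feller semigroup $T$ whose generator on $\Cc^\infty(\R)$ is $\cL_0 f = |\sigma(\cdot)|^\alpha\,(-(-\Delta)^{\alpha/2})f$, verify Hypotheses~\ref{h1b} and \ref{hyp.mp}, perturb by the drift $\hat B f = b\,\nabla f$ via Theorem~\ref{t.perturbeuclidean} and Theorem~\ref{t.mp.wellposed}, and pass to the SDE via Kurtz's equivalence. The paper obtains $T$ from weak uniqueness for $dY_t = \sigma(Y_{t-})\,dL_t$ (\cite[Thm.\ 2.1]{kulik19}, \cite[Thm.\ 5.23]{lm6}) together with the core property and the strong Feller property (\cite[Thm.\ 1.14]{ltp3} plus boundedness of the transition density); your citations to the stable-like existence theory amount to the same thing.

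The step you single out as the delicate one --- the $\Cb^\rho$-regularization estimate --- is where your sketch would not quite go through. The Chapman--Kolmogorov argument in Proposition~\ref{p.levy}, which bootstraps the gradient bound $\|\nabla T(t)f\|_\infty \le c\,t^{-1/\alpha}\|f\|_\infty$ to a $\Cb^2$-bound and then interpolates, relies on the convolution identity $p_{2t}=p_t*p_t$; this holds for spatially homogeneous (L\'evy) semigroups but fails for the variable-coefficient semigroup generated by $\cL_0$. The gradient bound combined with $\|T(t)f\|_\infty\le\|f\|_\infty$ and interpolation only yields $\Cb^\rho$-regularization for $\rho\in(0,1)$, which is insufficient because the drift perturbation forces $\rho>1$. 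The paper instead invokes the Schauder estimate of \cite[Prop.\ 4.5]{reg-feller}, which directly gives $\|T(t)f\|_{\Cb^\rho(\R)}\le c_\rho t^{-\rho/\alpha}\|f\|_\infty$ for every $\rho<\alpha$; that, rather than a Chapman--Kolmogorov iteration, is the ingredient to cite. Apart from this one point, your argument agrees with the paper's.
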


\begin{proof}
	Since $\sigma$ is H\"older continuous, the SDE $dY_t = \sigma(Y_{t-}) \, dL_t$ has a unique weak solution which gives rise to a Feller process, see e.g. \cite[Thm.\ 2.1]{kulik19} or \cite[Thm.\ 5.23]{lm6}. Denote by $(T(t))_{t \geq 0}$ the associated semigroup and by $\hat{A}$ the full generator. It is known that $\Cc^{\infty}(\R)$ is a core for the generator of $T|_{\Co(\R)}$, cf.\ \cite{kulik19,lm6}. The strong Feller property of $T(t)$ follows from \cite[Thm.\ 1.14]{ltp3} and the fact that $Y_t$ has a bounded transition density $p_t$, see e.g. \cite{lm6}.  Moreover, \cite[(Proof of) Proposition 4.5]{reg-feller} yields the regularity estimate
	\begin{equation}
		\|T(t) f\|_{\Cb^{\beta}(\R)} \leq c_{\beta} t^{-\beta/\alpha} \|f\|_{\infty}, \qquad f \in \Bb(\R),\;t>0, \label{eq.reg-sde}
	\end{equation}
	for $\beta<\alpha$. Choosing $\beta \in (1,\alpha)$ and setting $\hat{B}f(x)=b(x) \nabla f(x)$, we see that the assumptions of Theorem~\ref{t.perturbeuclidean} are satisfied, and so there is a Markovian $\Co$-semigroup with full generator $\hat{A}+\hat{B}$. Using the well-posedness of the $(A+\hat{B},\Cc^{\infty}(\R))$-martingale problem, which holds by Theorem~\ref{t.mp.wellposed}, it follows from \cite{kurtz} that the SDE \eqref{eq.sde-drift} has a unique weak solution.
\end{proof}

\begin{rem} \label{r.sde-drift} 
Proposition \ref{p.sde-drift} can be extended in several ways:
\begin{enumerate}
	\item We can consider a wider class of driving L\'evy processes $(L_t)_{t \geq 0}$. There are two key ingredients which we need. Firstly, the existence of a unique weak solution to the SDE $dY_t = \sigma(Y_{t-}) \, dL_t$; there is extensive literature on this topic, see e.g.\ \cite{chen,kul21,lm6,kulik19,zan02} and the references therein. Secondly, a regularity estimate $\|T(t) f\|_{\Cb^{\beta}(\R^d)} \leq \varphi(t) \|f\|_{\infty}$ for the corresponding semigroup, where $\varphi \in L^1(0,1)$ and $\beta>1$. Though there are quite some works on regularizing properties of semigroups associated with SDEs, many of them study only $\beta$-H\"older regularity for $\beta \leq 1$, and this is not enough for our purposes.
	\item Using localization techniques for martingale problems, one can relax the growth assumptions on $b$ and $\sigma$; see \cite[Thm.\ 4.2]{mp-feller} and \cite[Thm.\ 5.3]{hoh} for useful results in this direction.
\end{enumerate}
\end{rem}

\begin{prop} \label{p.sde-jumps} 
	Let $(L_t)_{t \geq 0}$ be a one-dimensional isotropic $\alpha$-stable L\'evy process for $\alpha \in (0,2)$. Let $(M_t)_{t \geq 0}$ be a one-dimensional L\'evy process independent of $(L_t)_{t \geq 0}$ with L\'evy triplet $(b,0,\nu)$ such that $\int_{\{|y| \leq 1\}} |y|^{\beta} \, \nu(dy) < \infty$ for some $\beta \in [0,\alpha)$. If $\alpha \leq 1$ also assume that the compensated drift $b-\int_{\{|y|<1\}} y \, \nu(dy)$ is zero. Let $\sigma: \R \to (0,\infty)$ be a H\"older continuous mapping with $0<\inf_x \sigma(x) \leq \sup_x \sigma(x)<\infty$. For every $\kappa \in \Bb(\R)$, the SDE
	\begin{equation*}
		dX_{t-} = \kappa(X_{t-}) \, dM_t + \sigma(X_{t-}) \, dL_t, \qquad X_0=x
	\end{equation*}
	has a unique weak solution. It does not explode in finite time and it gives rise to a Markovian Feller semigroup with symbol
	\begin{equation*}
		q(x,\xi) = \psi(\kappa(x) \xi) + |\sigma(x)|^{\alpha} |\xi|^{\alpha},
	\end{equation*}
	where $\psi$ denotes the characteristic exponent of $(M_t)_{t \geq 0}$.
\end{prop}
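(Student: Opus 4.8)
The plan is to proceed as in Propositions~\ref{p.levy-drift} and~\ref{p.sde-drift}: realize the ``pure noise'' equation $dY_t=\sigma(Y_{t-})\,dL_t$ as a Feller semigroup $T$ satisfying Hypotheses~\ref{h1b} and~\ref{hyp.mp}, identify the term $\kappa(X_{t-})\,dM_t$ with a L\'evy-type operator $\hat B$ of the form \eqref{e.perturb} satisfying Hypothesis~\ref{h2}, apply Theorem~\ref{t.generic} to obtain a Markovian Feller semigroup $S$ together with well-posedness of the $(A+\hat B,\Cc^\infty(\R))$-martingale problem, and conclude via the equivalence of Kurtz~\cite{kurtz} between that martingale problem and the SDE.

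For the first step, since $\sigma$ is bounded and H\"older continuous with $\inf_x\sigma(x)>0$, the equation $dY_t=\sigma(Y_{t-})\,dL_t$ has a unique weak solution giving rise to a Feller process with semigroup $T$ for which $\Cc^\infty(\R)$ is a core, as in the proof of Proposition~\ref{p.sde-drift} (see \cite{kulik19,lm6}); the process has a bounded transition density, so each $T(t)$ is a strong Feller operator by \cite[Thm.~1.14]{ltp3}, and \cite[(Proof of) Proposition~4.5]{reg-feller} gives $\|T(t)f\|_{\Cb^{\gamma}(\R)}\le c_{\gamma}t^{-\gamma/\alpha}\|f\|_\infty$ for all $\gamma<\alpha$, $t\in(0,1)$ and $f\in\Bb(\R)$. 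Since $\beta<\alpha$ we fix $\rho\in(\beta,\alpha)$; then $T$ satisfies Hypotheses~\ref{h1b} and~\ref{hyp.mp} with this $\rho$, it is a Markovian $\Co$-semigroup (the $Y$-process is conservative because $\sigma$ is bounded), and $A|_{\Cc^\infty(\R)}$ is $\sigma(\cdot)^\alpha(-(-\Delta)^{\alpha/2})$, with symbol $|\sigma(x)|^{\alpha}|\xi|^{\alpha}$ (the isotropy of $L$ makes the drift correction vanish when $\alpha\ge1$).

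For the second step, write $\psi(\xi)=-ib\xi+\int_{\R\setminus\{0\}}(1-e^{iy\xi}+iy\xi\I_{(0,1)}(|y|))\,\nu(dy)$. The substitution $z=\kappa(x)y$ shows that the pseudo-differential operator with symbol $\psi(\kappa(x)\xi)$ is, in the form \eqref{e.perturb}, the operator $\hat B$ whose jumping kernel $\mu(x,\cdot)$ is the image of $\nu$ under $y\mapsto\kappa(x)y$ (with $\mu(x,\cdot)=0$ where $\kappa(x)=0$) and whose drift $\tilde b$ is bounded; moreover the compensated drift of $\hat B$ at $x$ equals $\kappa(x)$ times the compensated drift $b-\int_{\{|y|<1\}}y\,\nu(dy)$ of $M$. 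From $\int\min\{|\kappa(x)y|^{\beta},1\}\,\nu(dy)\le\max\{1,\|\kappa\|_\infty^{\beta}\}\int\min\{|y|^{\beta},1\}\,\nu(dy)$ we get $\|\mu\|_{\beta}<\infty$; measurability of $x\mapsto\int f\,d\mu(x,\cdot)$ follows from that of $\kappa$; and $\mu(x,\{|z|>R\})=\nu(\{|y|>R/|\kappa(x)|\})\le\nu(\{|y|>R/\|\kappa\|_\infty\})\to0$ gives the tightness condition \eqref{e.tightness}. If $\alpha\le1$ then $\rho<1$, and then $\beta<1$, so $\int_{\{|z|\le1\}}|z|\,\mu(x,dz)\le\|\mu\|_{\beta}<\infty$ and Hypothesis~\ref{h2}(iv) holds precisely because the compensated drift of $M$ is assumed to vanish. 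Hence $\hat B$ satisfies Hypothesis~\ref{h2} with the chosen $\rho$, and on $\Cc^\infty(\R)$ the operator $A+\hat B$ equals the pseudo-differential operator with symbol $q(x,\xi)=\psi(\kappa(x)\xi)+|\sigma(x)|^{\alpha}|\xi|^{\alpha}$.

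Theorem~\ref{t.generic} now yields a Markovian $\Cb$-Feller semigroup $S$ whose full generator is the bp-closure of $\{(f,(A+\hat B)f):f\in\Cc^\infty(\R)\}$, and since \eqref{e.tightness} holds, $S$ is a $\Co$-semigroup, hence a Markovian Feller semigroup; part~(b) of the theorem gives well-posedness of the $(A+\hat B,\Cc^\infty(\R))$-martingale problem. By Kurtz~\cite{kurtz}, this is equivalent to the existence of a unique weak solution to $dX_{t-}=\kappa(X_{t-})\,dM_t+\sigma(X_{t-})\,dL_t$, $X_0=x$, whose transition semigroup is $S$ and whose symbol is $q$; since $S$ is conservative the solution does not explode in finite time. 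I expect the main obstacle to be the bookkeeping in the change of variables $z=\kappa(x)y$: the ``inner'' truncation $\I_{(0,1)}(|y|)$ becomes $\I_{\{|z|<|\kappa(x)|\}}$, which must be renormalized against the truncation $\I_{(0,1)}(|z|)$ of \eqref{e.perturb}, and one has to check that the resulting correction to the drift is bounded uniformly in $x$ (using only $\|\kappa\|_\infty<\infty$ and finiteness of $\nu$ away from $0$) and that the compensated drift transforms as claimed, with care taken on the set where $\kappa$ vanishes; the remaining steps are direct applications of the abstract results above and of \cite{kurtz}.
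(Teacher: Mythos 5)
Your proposal follows exactly the same route as the paper's proof: take $T$ to be the Feller semigroup for $dY_t=\sigma(Y_{t-})\,dL_t$ as in Proposition~\ref{p.sde-drift}, encode the $\kappa(X_{t-})\,dM_t$ term as a L\'evy-type operator $\hat B$ built from the pushforward of $\nu$ under $y\mapsto\kappa(x)y$, verify Hypothesis~\ref{h2} and the tightness condition \eqref{e.tightness}, invoke Theorem~\ref{t.perturbeuclidean}/Theorem~\ref{t.generic} and Theorem~\ref{t.mp.wellposed}, and finish by Kurtz's equivalence; the paper's proof is simply more terse, defining $\hat B$ directly and deferring the verification to Taylor's formula and the proof of Proposition~\ref{p.sde-drift}. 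You are in fact more careful than the paper on the truncation re-compensation $\I_{(0,1)}(|y|)\leftrightarrow\chi(z)$ and on the identity relating the compensated drift of $\hat B$ to that of $M$. Two small points: when $\alpha>1$ you should explicitly choose $\rho\in(\max\{1,\beta\},\alpha)$ rather than just $\rho\in(\beta,\alpha)$, so that Hypothesis~\ref{h2}(iv) is vacuous (otherwise the vanishing compensated drift would be wrongly required); and note that the SDE's drift contribution from $\kappa(X_{t-})\,dM_t$ is $b\,\kappa(x)\,f'(x)$, which your symbol-level derivation correctly produces while the paper's displayed formula for $\hat B$ has an apparent typo omitting the $\kappa(x)$.
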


Proposition~\ref{p.sde-jumps} can be extended to other driving L\'evy processes $(L_t)_{t \geq 0}$, cf.\ Remark~\ref{r.sde-drift}.

\begin{proof}[Proof of Proposition~\ref{p.sde-jumps}]
	Because of the integrability condition on the L\'evy measure $\nu$, an application of Taylor's formula shows that the operator 
	\begin{align*}
		\hat{B}f(x) & \coloneqq  b \cdot \nabla f(x) \I_{\{\alpha >1\}}\\
		& \quad  +\int_{\{y \neq 0\}} (f(x+\kappa(x) y)-f(x)- f'(x) \kappa(x) y \I_{(0,1)}(|y|) \I_{\{\alpha>1\}}) \, \nu(dy)
	\end{align*}
	satisfies $\|\hat{B}f\|_{\infty} \leq C \|f\|_{\Cb^{\beta}(\R)}$ for some constant $C>0$. Moreover, Hypothesis~\ref{h2} holds. Note that the kernels $\mu(x,dy)$ associated with $\hat{B}$ satisfy the tightness condition \eqref{e.tightness} because
	\begin{align*}
		\mu(x,\{|y|\geq R\}) 
		& = \nu(\{y \in \R \setminus \{0\}; |\kappa(x) y| \geq R\})\\
		&\leq \nu(\{y \in \R \setminus \{0\}; |y| \geq R/\|\kappa\|_{\infty}\}) \xrightarrow[]{R \to \infty} 0.
	\end{align*}
	 Now we can proceed exactly as in the proof of Proposition~\ref{p.sde-drift}; the only difference is that we consider the just-defined operator $\hat{B}$ rather than the drift operator.
\end{proof}

\begin{example}
	Let $\alpha \in (0,2]$ and $\beta \in (0,\alpha)$. Let $(L_t)_{t \geq 0}$ be an isotropic $\alpha$-stable L\'evy process and $(M_t)_{t \geq 0}$ an isotropic $\beta$-stable L\'evy process, which is independent of $(L_t)_{t \geq 0}$. Let $\sigma$ be a H\"older continuous function with $0<\inf_x \sigma(x) \leq \sup_x \sigma(x)<\infty$. Then, by Proposition~\ref{p.sde-jumps} and Theorem~\ref{t.sg.conv}: \begin{enumerate}
		\item For every $\kappa \in \Bb(\R^d)$, the SDE
		\begin{equation}
			dX_t = \kappa(X_{t-}) \, dM_t + \sigma(X_{t-}) \, dL_t, \qquad X_0 = x,  \label{eq.sde-jumps}
		\end{equation}
		has a unique weak solution. It gives rise to a Markovian Feller semigroup.
		\item If $\kappa_n \in \Bb(\R)$, $n \in \N$, is a sequence of measurable functions such that $\kappa_n \bp \kappa$, then the solutions $X^{(\kappa_n)}$ to \eqref{eq.sde-jumps} (with $\kappa$ replaced by $\kappa_n$) converge weakly to $X=X^{(\kappa)}$ in the Skorohod space $\cD[0,\infty)$.
	\end{enumerate}
\end{example}

\appendix

\section{bp-convergence of measure-valued functions}\label{appa}

The concept of \emph{bp-convergence}, see Section~\ref{kernel}, is a very useful tool in probability theory and allows for a version of Dynkin's $\pi$-$\lambda$ theorem for bounded measurable functions. We recall that if $E$ is a metric space, then a set $M \subset \Bb(E)$ is called \emph{bp-closed} if whenever $(f_n) \subset M$ is a sequence with $f_n \bp f$, then also $f\in M$. The \emph{bp-closure} of $M\subset \Bb(E)$ is the smallest bp-closed subset of $\Bb(E)$ that contains $M$. If the bp-closure of $M$ is $\Bb(E)$ we say that $M$ is \emph{bp-dense} in $\Bb(E)$.

We recall the following general result from \cite[Prop.\ 3.4.2]{ek}.

\begin{prop}\label{p.bpdense}
In a metric space $E$, the set $\Cb(E)$ is bp-dense in $\Bb(E)$. If $E$ is additionally locally compact, then also $\Cc(E)$ is bp-dense in $\Bb(E)$.
\end{prop}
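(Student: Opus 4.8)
The plan is to run a Dynkin $\pi$-$\lambda$ argument, reduced to the level of sets. Write $\mathcal{H}$ for the bp-closure of $\Cb(E)$ inside $\Bb(E)$; by construction $\mathcal{H}$ is bp-closed and contains $\Cb(E)$, and the task is to show $\mathcal{H}=\Bb(E)$. The first (and only slightly delicate) step is to check that $\mathcal{H}$ is a \emph{linear} subspace containing the constants. Since the bp-closure is a priori obtained by a transfinite iteration of sequential limits, I would avoid tracking that construction and argue instead as follows: for a fixed bounded $g$ and a fixed scalar $c$, the maps $f\mapsto f+g$ and $f\mapsto cf$ preserve uniform boundedness and pointwise convergence, so the sets $\{f\in\Bb(E):f+g\in\mathcal{H}\}$ (for $g\in\mathcal{H}$) and $\{f\in\Bb(E):cf\in\mathcal{H}\}$ are themselves bp-closed. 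Applying this first with $g\in\Cb(E)$, then once more with $g\in\mathcal{H}$, and using that $\Cb(E)$ is linear, gives $\mathcal{H}+\mathcal{H}\subseteq\mathcal{H}$ and $c\mathcal{H}\subseteq\mathcal{H}$; moreover $\one\in\Cb(E)\subseteq\mathcal{H}$.

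Next I would pass to sets by putting $\mathcal{D}:=\{A\subseteq E:\one_A\in\mathcal{H}\}$. Linearity of $\mathcal{H}$ gives $E\in\mathcal{D}$ and stability under proper differences ($\one_{B\setminus A}=\one_B-\one_A$ when $A\subseteq B$), while bp-closedness of $\mathcal{H}$ — together with the observation that a bounded increasing pointwise limit is a bp-limit — gives stability under increasing unions; thus $\mathcal{D}$ is a Dynkin system. It contains the collection $\mathcal{P}:=\{\{f>0\}:f\in\Cb(E)\}$: for $f\in\Cb(E)$ the functions $g_n:=\min\{1,nf^+\}$ lie in $\Cb(E)$ and increase pointwise to $\one_{\{f>0\}}$, so $\one_{\{f>0\}}\in\mathcal{H}$. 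Moreover $\mathcal{P}$ is a $\pi$-system, since $\{f>0\}\cap\{g>0\}=\{\min\{f,g\}>0\}$ with $\min\{f,g\}\in\Cb(E)$, and it generates $\cB(E)$, since every open $U\subseteq E$ equals $\{\phi_U>0\}$ for the bounded continuous function $\phi_U(x):=\min\{1,d(x,E\setminus U)\}$, while every member of $\mathcal{P}$ is Borel. By Dynkin's theorem $\mathcal{D}\supseteq\sigma(\mathcal{P})=\cB(E)$, so $\one_A\in\mathcal{H}$ for every Borel $A$; by linearity all simple functions lie in $\mathcal{H}$, and since every $f\in\Bb(E)$ is a uniform — hence bp- — limit of simple functions, $\mathcal{H}=\Bb(E)$, which is the first assertion.

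For the second assertion, let $\mathcal{H}'$ be the bp-closure of $\Cc(E)$. Here I would use that $E$ is $\sigma$-compact — which holds in the present setting, $E$ being locally compact and Polish, cf.\ Section~\ref{kernel} — to fix an exhaustion $E=\bigcup_n K_n$ by compact sets with $K_n\subseteq\operatorname{int}K_{n+1}$ and, via Urysohn's lemma, functions $\chi_n\in\Cc(E)$ with $\one_{K_n}\leq\chi_n\leq\one_{K_{n+1}}$. Then for $g\in\Cb(E)$ the functions $g\chi_n$ lie in $\Cc(E)$, are bounded by $\|g\|_\infty$, and converge pointwise to $g$; hence $g\in\mathcal{H}'$, i.e.\ $\Cb(E)\subseteq\mathcal{H}'$. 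As $\mathcal{H}'$ is bp-closed and contains $\Cb(E)$, it contains the bp-closure of $\Cb(E)$, which by the first part is all of $\Bb(E)$; therefore $\mathcal{H}'=\Bb(E)$.

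I expect the main obstacle to be exactly the first step: ensuring that the bp-closure of the linear space $\Cb(E)$ is again linear (hence that $\mathcal{D}$ is genuinely a Dynkin system) without descending into the transfinite construction of the closure. The device of testing membership through auxiliary bp-closed sets disposes of this cleanly; after that the argument is the standard $\pi$-$\lambda$ routine, and the locally compact case reduces to the metric case via the cutoff $g\mapsto g\chi_n$.
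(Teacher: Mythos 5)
Your proof is correct. Note that the paper does not prove this proposition itself; it is cited directly from Ethier--Kurtz \cite[Prop.\ 3.4.2]{ek}, so the question of matching the paper's proof does not arise. Your argument is the standard functional $\pi$-$\lambda$ route, and the device of passing to the auxiliary bp-closed sets $\{f : f+g\in\mathcal{H}\}$ and $\{f: cf\in\mathcal{H}\}$ is exactly the right way to obtain linearity of the bp-closure without unwinding the transfinite construction; the bootstrapping (first $g\in\Cb(E)$, then $g\in\mathcal{H}$) is carried out correctly. The reduction of $\Cc(E)$ to $\Cb(E)$ via the cutoff $g\mapsto g\chi_n$ along a compact exhaustion is also fine.

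One small remark worth keeping in mind: as literally stated the second assertion does require $E$ to be $\sigma$-compact (equivalently, in the locally compact metric setting, separable) --- a locally compact but non-separable metric space such as an uncountable discrete set would furnish a counterexample, since the bp-closure of $\Cc(E)$ would consist only of functions with countable support. You already flag this by appealing to the standing assumption of Section~\ref{kernel} that $E$ is locally compact Polish; Ethier--Kurtz include ``separable'' explicitly in their formulation. So the hypothesis is genuinely needed, but it is available in the paper's setting, and your proof uses it correctly.
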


In this appendix, we aim to extend this result to functions that take values in the cone $\Mp(E)$ of positive, locally finite measures on $E$, where $E$ is a locally compact, separable metric space. To that end, we endow $\Mp(E)$ with the vague topology, induced by the space $\Cc(E)$ of continuous functions with compact support. As is well known, see \cite[Thm.\ A.2.3(i)]{kallenberg}, $\Mp(E)$ is a Polish space in the vague topology, i.e.\ the topology is induced by a complete, separable metric. 

As a matter of fact, we will not be interested in the case of arbitrary $E$, but only in the case $E=\R^d \setminus \{0\}$. Fix a continuous function $g: \R^d \setminus\{0\} \to [0,\infty)$. Given a measure $\nu \in\Mp(\R^d\setminus\{0\})$, we put
\[
|\nu|_{g} \coloneqq \int_{\R^d\setminus\{0\}} g(y) \, d\nu (y).
\] 
We denote by $K^g (\R^d,  \Mp(\R^d\setminus\{0\}))$ the set of measurable mappings $\mu : \R^d\to \Mp(\R^d\setminus \{0\})$ such that 
\begin{align*}
 \|\mu\|_g \coloneqq \sup_{x\in \R^d}|\mu(x)|_g < \infty
  \quad \mbox{ and } \quad \lim_{R\to\infty} \sup_{x\in K} \mu (x, \{ |y|> R\}) = 0\big\} \, \, \text{for all $K \Subset \R^d$}.
\end{align*}
We note that, as a consequence of \cite[Thm.\ A.2.3(iv)]{kallenberg} the map $\mu : \R^d \to \Mp(\R^d\setminus\{0\})$ is measurable if and only if $x\mapsto \int_{\R^d} f(y) \mu (x, dy)$ is measurable for every $f\in \Cc(\R^d\setminus\{0\})$ if and only if $x\mapsto \mu (x, A)$ is measurable for every relatively compact set $A$.

\begin{defn}\label{def.bp}
We say that a sequence $(\mu_n) \subset K^g (\R^d, \Mp(\R^d\setminus\{0\}))$ \emph{converges boundedly and pointwise} to $\mu: \R^d \to \Mp(\R^d\setminus\{0\})$, if
\begin{enumerate}
\item $\sup_n\|\mu_n\|_g <\infty$,
\item $\mu_n(x) \to \nu (x)$ vaguely for every $x\in \R^d$ and
\item for every $K\Subset \R^d$ we have $\lim_{R\to\infty} \sup_{n\in \N} \sup_{x\in K} \mu_n (x, \{|y|> R\}) =0$.
\end{enumerate}
We write $\mu_n \stackrel{bp}{\to}\nu$ to indicate bounded and pointwise convergence.
\end{defn}

\begin{rem}\label{rem.automatic}
We note that if $g(x) = \varphi (|x|)$ for some function $\varphi : (0,\infty) \to (0,\infty)$ with $\varphi (t) \uparrow \infty$ 
as $t\to \infty$, then condition (iii) in Definition \ref{def.bp} is automatically fulfilled and follows from 
assumption (i). Indeed, for $R>0$, $x\in \R^d$ and $n\in \N$ we have
\[
\mu_n(x, \{|y|>R\}) \leq \frac{1}{\varphi (R)} \int_{\{|y|>R\}} \varphi (|y|)\, \mu_n(x, dy) \leq 
\frac{1}{\varphi(R)} \sup_{n\in \N} \|\mu_n\|_g \to 0.
\]
\end{rem}

We note that we did not assume in the above definition that the limit $\mu$ belongs to $K^g (\R^d, \Mp(\R^d\setminus\{0\}))$. As it turns out, this holds true automatically.

\begin{lem}\label{l.kalphabpclosed}
Let $(\mu_n) \subset K^g (\R^d, \Mp(\R^d\setminus\{0\}))$. If $\mu_n \bp\mu$, then also
$\mu$ belongs to $K^g (\R^d, \Mp(\R^d\setminus\{0\}))$. 
\end{lem}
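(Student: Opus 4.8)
The plan is to check, one by one, the three conditions that define membership in $K^g(\R^d,\Mp(\R^d\setminus\{0\}))$ for the vague limit $\mu$: that $x\mapsto\mu(x)$ is measurable, that $\|\mu\|_g<\infty$, and that $\lim_{R\to\infty}\sup_{x\in K}\mu(x,\{|y|>R\})=0$ for every $K\Subset\R^d$. In each case the relevant uniform estimate for the approximating sequence $(\mu_n)$ is built into the definition of bp-convergence (Definition~\ref{def.bp}), and the work consists in transferring it to $\mu$ via elementary properties of vague convergence of positive Radon measures. The measurability is immediate: by the characterization recalled before Definition~\ref{def.bp} it suffices that $x\mapsto\int f\,d\mu(x)$ be measurable for each $f\in\Cc(\R^d\setminus\{0\})$, and this map equals $\lim_{n}\int f\,d\mu_n(x)$ pointwise in $x$, hence is a pointwise limit of measurable functions and so measurable.

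For the mass bound, put $C:=\sup_n\|\mu_n\|_g<\infty$. Fix $x$. For every $\varphi\in\Cc(\R^d\setminus\{0\})$ with $0\le\varphi\le g$ one has $\int\varphi\,d\mu(x)=\lim_n\int\varphi\,d\mu_n(x)\le\sup_n\int g\,d\mu_n(x)\le C$. Choosing a sequence $\varphi_k\uparrow g$ of such functions (a standard cutoff construction, using that $g$ is continuous and finite on $\R^d\setminus\{0\}$) and applying Fatou's lemma (or monotone convergence) gives $\int g\,d\mu(x)\le C$; taking the supremum over $x$ yields $\|\mu\|_g\le C<\infty$.

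For the tightness condition, fix $K\Subset\R^d$ and $\eps>0$ and use condition (iii) of Definition~\ref{def.bp} to pick $R_0$ with $\sup_n\sup_{x\in K}\mu_n(x,\{|y|>R\})\le\eps$ for all $R>R_0$. The point is that $\{|y|>R\}$ is an \emph{open} subset of $\R^d\setminus\{0\}$, so the portmanteau-type inequality for vague convergence gives $\mu(x,\{|y|>R\})\le\liminf_n\mu_n(x,\{|y|>R\})$: indeed, for any compact $C\subset\{|y|>R\}$ choose $f\in\Cc(\R^d\setminus\{0\})$ with $\one_C\le f\le 1$ and $\supp f$ a compact subset of $\{|y|>R\}$, so that $\mu(x,C)\le\int f\,d\mu(x)=\lim_n\int f\,d\mu_n(x)\le\liminf_n\mu_n(x,\{|y|>R\})$; taking the supremum over such $C$ and using inner regularity of $\mu(x)$ proves the inequality. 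Combining, $\sup_{x\in K}\mu(x,\{|y|>R\})\le\eps$ for all $R>R_0$, which is exactly what we need. Together with the first two steps this shows $\mu\in K^g(\R^d,\Mp(\R^d\setminus\{0\}))$.

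The only step I expect to require any care is the tightness bound, where one needs the \emph{open}-set half of the portmanteau theorem for vague convergence: the mass of $\mu$ escaping to infinity has to be bounded \emph{from above} by that of the $\mu_n$, i.e.\ in the direction $\mu(\text{open})\le\liminf_n\mu_n(\text{open})$ rather than the (perhaps more familiar) $\limsup_n\mu_n(\text{compact})\le\mu(\text{compact})$. The first two steps are routine manipulations with pointwise limits and monotone/Fatou convergence.
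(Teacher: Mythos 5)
Your proof is correct and follows essentially the same route as the paper: measurability as a pointwise limit of measurable maps, the mass bound via an increasing $\Cc$-approximation of $g$ plus monotone convergence, and the tightness transfer via lower semicontinuity of open-set mass under vague convergence. The only cosmetic difference is that where you phrase the last step as portmanteau for open sets with inner regularity, the paper approximates $\one_{\{|y|>R\}}$ from below by an increasing sequence in $\Cc(\R^d\setminus\{0\})$ and invokes monotone convergence---the same mechanism, differently packaged.
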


\begin{proof}
We first note that $\mu$ is measurable as pointwise limit of measurable functions. We now pick $C>0$ such that
$\|\mu_n\|_g \leq C$ for all $n\in \N$ and fix a sequence $(f_k)\in \Cc(\R^d\setminus\{0\})$ such that
$0\leq f_k(y) \uparrow g(y)$. As $\mu_n(x) \to \mu (x)$ vaguely, we obtain for  fixed $k\in \N$ and arbitrary $x\in \R^d$ that
\begin{align*}
\int_{\R^d} f_k(y) \mu (x, dy)  & = \lim_{n\to\infty} \int_{\R^d} f_k(y) \mu_n(x, dy)
 \leq \limsup_{n\to\infty} \int_{\R^d}
g(y)\, \mu_n(x, dy) \leq C.
\end{align*}
Taking the limit $k\to \infty$, monotone convergence implies 
\[
\int_{\R^d}
g(y)\, \mu(x, dy) \leq C
\]
for every $x\in \R^d$, proving that $\|\mu\|_g \leq C<\infty$. To prove that for a compact set $K$, we have
$\lim_{R\to \infty}\sup_{x\in K}\mu (x, \{|y|>R\})=0$, we pick, given $\eps>0$, a radius $R>0$ such that
$\mu_n(x, \{|y|>R\}) \leq \eps$ for all $x\in K$ and $n\in \N$. Approximating $\one_{\{|y|>R\}}$ by an increasing sequence of functions
in $\Cc(\R^d)$, we can repeat the above argument to prove that $\mu (x, \{|y| >R\}) \leq \eps$ for all $x\in K$.
\end{proof}

In what follows, we use the notions of bp-closedness, bp-closure and bp-density with the obvious meaning also for subsets of $K^g(\R^d, \Mp(\R^d\setminus\{0\}))$. We can now prove the main result of this appendix, a version of Proposition \ref{p.bpdense} for measure-valued functions.

\begin{thm}\label{t.bpdense}
The set $\Cc(\R^d, \Mp(\R^d\setminus\{0\})) \cap K^g(\R^d, \Mp(\R^d\setminus\{0\}))$ is bp-dense in $K^g(\R^d, \Mp(\R^d\setminus\{0\}))$.
\end{thm}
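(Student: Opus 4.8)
The plan is to describe the bp-closure $\mathcal D$ of $\Cc(\R^d,\Mp(\R^d\setminus\{0\}))\cap K^g(\R^d,\Mp(\R^d\setminus\{0\}))$ inside $K^g$ and to reach an arbitrary $\mu\in K^g$ in finitely many ``layers'', each obtained from the previous one by a sequence of bp-limits; since $\mathcal D$ is by definition closed under bp-limits, this yields $\mathcal D=K^g$. Throughout I use that $g$ is bounded below by a positive constant on every compact $L\subset\R^d\setminus\{0\}$ (true for the weights of interest, e.g. $g(y)=\min\{|y|^\beta,1\}$), so that every $\mu\in K^g$ obeys the uniform mass bound $\sup_{x}\mu(x,L)\le(\min_L g)^{-1}\|\mu\|_g<\infty$.

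\emph{Layer 0 (continuous, finitely supported in $y$).} For $z\in\R^d\setminus\{0\}$ and $0\le h\in\Cc(\R^d)$ the map $x\mapsto h(x)\delta_z$ is vaguely continuous, has $g$-norm $\|h\|_\infty g(z)<\infty$, and is trivially tight (its mass sits at the single point $z$); hence every finite sum $\sum_{i=1}^N h_i(\cdot)\delta_{z_i}$ with $0\le h_i\in\Cc(\R^d)$ already lies in $\Cc(\R^d,\Mp)\cap K^g$. \emph{Layer 1 (measurable coefficients).} For fixed $z\in\R^d\setminus\{0\}$ the set $\{0\le h\in\Bb(\R^d):h(\cdot)\delta_z\in\mathcal D\}$ is bp-closed and contains the nonnegative elements of $\Cc(\R^d)$, so by (the argument behind) Proposition~\ref{p.bpdense} it is all of $\{0\le h\in\Bb(\R^d)\}$; a Dynkin-type argument with finitely many atoms then shows $\sum_{i=1}^N h_i(\cdot)\delta_{z_i}\in\mathcal D$ for all nonnegative bounded measurable $h_i$ and all $z_i\in\R^d\setminus\{0\}$.

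\emph{Layer 2 (reduction to a fixed annulus).} For $\mu\in K^g$ put $\mu^{\langle n\rangle}:=\mathds{1}_{A_n}\mu$ with $A_n:=\{1/n\le|y|\le n\}$. Then $\|\mu^{\langle n\rangle}\|_g\le\|\mu\|_g$, the tightness condition holds uniformly in $n$, and $\mu^{\langle n\rangle}(x)\to\mu(x)$ vaguely at \emph{every} $x$ because $\int f\,d\mu^{\langle n\rangle}(x)=\int f\,d\mu(x)$ once $\supp f\subset A_n$; hence it suffices to treat $\mu$ supported (in $y$) in a fixed compact annulus $A\subset\R^d\setminus\{0\}$. \emph{Layer 3 (atomisation in $y$).} For such $\mu$ choose finite Borel partitions $\mathcal P_k$ of $A$ with mesh tending to $0$ and points $c_P\in P$, and set $\mu_k:=\sum_{P\in\mathcal P_k}\mu(\cdot,P)\,\delta_{c_P}$, a finite sum of the type covered by Layer~1, hence in $\mathcal D$. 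Uniform continuity of $f$ on $A$ gives, at every $x$, $\bigl|\int f\,d\mu_k(x)-\int f\,d\mu(x)\bigr|\le\omega_f(\operatorname{mesh}\mathcal P_k)\,\mu(x,A)\to0$, where $\omega_f$ is the modulus of continuity of $f$; uniform continuity of $g$ on $A$ together with the uniform mass bound gives $\bigl|\,\|\mu_k\|_g-\sup_x\!\int_A g\,d\mu(x)\,\bigr|\le\omega_g(\operatorname{mesh}\mathcal P_k)(\min_A g)^{-1}\|\mu\|_g$, so $\sup_k\|\mu_k\|_g<\infty$; and tightness is uniform since all $\mu_k$ are supported in $A$. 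Hence $\mu_k\bp\mu$ and $\mu\in\mathcal D$. Chaining Layers~0--3 yields $K^g\subset\mathcal D$, i.e. $\mathcal D=K^g$.

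The point that makes the argument go through — and the reason for atomising in the $y$-variable rather than the more obvious mollification in the $x$-variable, which would only give convergence for a.e.\ $x$ — is that Definition~\ref{def.bp}(ii) demands vague convergence at \emph{every} point $x$: in Layer~3 the error at a fixed $x$ is controlled by the single finite number $\mu(x,A)$, so pointwise convergence everywhere is automatic. I expect the main technical obstacle to be the bookkeeping of the weight $g$ in Layer~3 (keeping $\sup_k\|\mu_k\|_g$ finite), which is precisely where one uses that $g$ is bounded below on compact subsets of $\R^d\setminus\{0\}$; a secondary, purely measure-theoretic point is making Layer~1 rigorous, for which one replays the Dynkin/monotone-class proof of Proposition~\ref{p.bpdense} inside the cone $K^g$, using that bp-limits respect sums and nonnegative scalar multiples of positive measures.
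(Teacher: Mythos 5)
Your proof is correct, granted the mild extra assumption you state (that $g$ is bounded below on compact subsets of $\R^d\setminus\{0\}$); the theorem as stated only demands $g\ge0$ continuous, but the concrete weight $g(y)=\min\{|y|^\beta,1\}$ used in the paper satisfies your hypothesis, so nothing is lost for the intended application. The argument genuinely diverges from the paper's at the key approximation step. Both begin by showing, from continuous elements, that the bp-closure contains finite combinations $\sum_k\one_{A_k}(\cdot)\nu_k$ with measurable indicator coefficients --- your Layers~0--1 do this for Dirac masses $\nu_k=\delta_{z_k}$, which is all you later need, while the paper's Steps~1--3 also cover general tight $\nu_k$ and countable sums. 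The paper then fixes $\mu$, forms the set $S_\mu\subset\Mp(\R^d\setminus\{0\})$ of admissible measures, uses separability of $\Mp(\R^d\setminus\{0\})$ (a Polish space in the vague topology) to partition $S_\mu$ into countably many small balls via a metric, and approximates $\mu(x)$ by the centre of the ball containing it, with a further truncation $\one_{B(0,n)}\mu$ in the $x$-variable to remove a uniform-tightness assumption. You instead truncate the $y$-support to a compact annulus $A$ (Layer~2, which makes tightness trivial) and then atomize $\mu(x,\cdot)|_A$ via a finite Borel partition of $A$ with shrinking mesh (Layer~3). Your route is more elementary: it avoids the Polish-space machinery and uses only finite sums, at the modest price of the lower bound on $g$ needed to get the uniform mass bound $\sup_x\mu(x,A)<\infty$. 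Your closing remark on why one atomizes in $y$ rather than mollifies in $x$ --- because Definition~\ref{def.bp}(ii) requires vague convergence at \emph{every} $x$, not almost every $x$ --- is exactly right and mirrors the paper's own motivation for the measure-space partition.
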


\begin{proof}
We write $K^g_C$ for the set which is claimed to be bp-dense and denote by $\mathcal{F}$ its bp-closure. We proceed in several steps.\smallskip

\emph{Step 1}: We prove that if $\mu(x, A) = \varphi (x) \cdot \nu(A)$ for some $\varphi \in \Bb(\R^d)$ and $\nu \in \Mp(\R^d\setminus \{0\})$ with $|\nu|_g <\infty$, then $\mu \in \mathcal{F}$.

Indeed, if $\varphi \in \Cc(\R^d)$, then a function $\mu$ of the above form belongs to $K^g_C$ and hence to $\mathcal{F}$. However, if $\varphi_n \bp\varphi$ it is easy to see that $\varphi_n(\cdot)\nu\bp\varphi(\cdot)\nu$ and hence the latter belongs $\mathcal{F}$ if this is true for the approximating sequence. Invoking Proposition \ref{p.bpdense}, it follows that indeed for every $\varphi \in \Bb(\R^d)$ the above function $\mu$ belongs to $\mathcal{F}$.\medskip

\emph{Step 2}: We prove that $\mathcal{F}$ is a cone.

To see this, fix $\mu \in \mathcal{F}$ and put 
\[
\mathcal{F}_\mu \coloneqq\big\{ \tilde\mu \in \mathcal{F} : s\mu + t\tilde \mu \in \mathcal{F} \mbox{ for all } s, t \in (0,\infty)\}.
\]
A moments thought shows that $\mathcal{F}_\mu$ is bp-closed. If $\mu \in K^g_C$, then clearly every element of $K^g_C$ belongs to $\mathcal{F}_\mu$ as $K^g_C$ is itself a cone. Thus $\mathcal{F}\subset \mathcal{F}_\mu$ and hence $\mathcal{F}=\mathcal{F}_\mu$. 
Now let $\mu \in \mathcal{F}$ be arbitrary. Then we have $K^g_C \subset \mathcal{F}_\mu$ by what was just proved. Using that $K^g_C$ is bp-dense in $\mathcal{F}$ once again, we find $\mathcal{F}_\mu= \mathcal{F}$ for every $\mu \in \mathcal{F}$ which proves that $\mathcal{F}$ is indeed a cone.\medskip

\emph{Step 3}: It follows from Step 1 and Step 2 that every function of the form
\[
\mu (x,\cdot ) = \sum_{k=1}^n \one_{A_k}(x)\nu_k(\cdot),
\]
where $A_1, \ldots, A_n \in \mathcal{B}(\R^d)$ and $\nu_1, \ldots, \nu_n \in \Mp(\R^d\setminus\{0\})$ with $|\nu_k|_g <\infty$ for all $k=1, \ldots, n$, belongs to $\mathcal{F}$. Given any sequence $(A_k) \subset \mathcal{B}(\R^d)$ of pairwise disjoint sets and a tight sequence $\nu_k \in \Mp(\R^d\setminus\{0\})$ with $\sup_n|\nu_n|_g <\infty$ also the function
\[
\mu (x, \cdot) \coloneqq \sum_{k=1}^\infty \one_{A_k}(x)\nu_k(\cdot)
\]
belongs to $\mathcal{F}$; indeed, the partial sums are all in $\mathcal{F}$ and so their bp-limit $\mu$ belongs to $\mathcal{F}$ by the closedness of $\mathcal{F}$.  \medskip

\emph{Step 4} 
Fix $\mu\in K^g(\R^d, \Mp(\R^d\setminus\{0\}))$ with $\lim_{R\to \infty} \sup_{x\in \R^d} \mu (x, \{|y|>R\}) = 0$. We put
\begin{align*}
S_\mu & \coloneqq \{ \nu \in \Mp(\R^d\setminus\{0\}) : |\nu|_g \leq \|\mu\|_g \mbox{ and }\\
& \qquad  \nu (\{|y|> R\}) \leq \sup_{x\in \R^d} \mu (x, \{|y|> R\}) \mbox{ for all } R> 0\}.
\end{align*}
Arguing similar as in the proof of Lemma \ref{l.kalphabpclosed}, we see that $S_\mu$ is a closed subset of $\Mp(\R^d\setminus\{0\})$.
Consequently, $S_\mu$ is separable, whence we find a dense sequence $(\nu_n) \subset S_\mu$ that is dense in $S_\mu$. 

Let us fix a metric $\rho$ that generates the topology on $S_\mu$. We write
$B(\nu, \eps)$ for the open ball in $S_\mu$ of radius $\eps$ and with center $\nu\in S_\mu$. We can now inductively define sets
$B_k^{(n)}$ by $B_1^{(n)} = B(\nu_1, n^{-1})$ and
\[
B_{k+1}^{(n)} \coloneqq B(\nu_{k+1}, n^{-1})\setminus (B_1^{(n)}\cup \cdots\cup B_k^{(n)}).
\]
Then for every $n\in \N$ the sets $(B_k^{(n)})_{k\in\N}$ are pairwise disjoint and their union equals $S_\mu$. Let us set
\[
\mu_n (x, \cdot) \coloneqq \sum_{k=1}^\infty \one_{\mu^{-1}(B_j^{(n)})}(x)\nu_j(\cdot).
\]
By Step 3, the function $\mu_n$ belongs to $\mathcal{F}$. However, we have $\|\mu_n \|_g \leq \|\mu\|_g$ for every $n\in \N$ as every $\nu_j$ belongs to $S_\mu$ and for $x\in \R^d$ we have $\rho (\mu_n(x), \mu (x)) \leq n^{-1}$ so that $\mu_n(x) \to \mu (x)$. By the definition of the set $S_\mu$ also condition (iii) in Definition \ref{def.bp} is satisfied so that $\mu_n \bp\mu$. It follows that $\mu \in \mathcal{F}$.\medskip

\emph{Step 5} We finish the proof.

Given $\mu \in K^g(\R^d, \Mp(\R^d\setminus\{0\}))$, we set $\mu_n \coloneqq \one_{B(0,n)}\mu$. By Step 4, we have $\mu_n \in \mathcal{F}$, as $\sup_{x\in \R^d}\mu_n(x, \{|y|>R\})\to 0$ as $R\to \infty$. On the other hand, we have $\mu_n \bp \mu$ and it follows that $\mu \in \mathcal{F}$.
\end{proof}

We now combine Proposition \ref{p.bpdense} and Theorem \ref{t.bpdense} into a single result which will be useful in the main part of this article.

\begin{cor}\label{c.bpdense}
Assume that $F$ is a subset of $\Bb(\R^d; \R^d)\times K^g(\R^d, \mathcal{M}^+(\R^d\setminus\{0\}))$ such that
\begin{enumerate}
[(i)]
\item $\Cc(\R^d;\R^d)\times\Big[\Cc\big(\R^d, \mathcal{M}^+(\R^d\setminus \{0\})\big) \cap K^g\big(\R^d, \mathcal{M}^+(\R^d\setminus\{0\})\big)\big]\subset F$
\item whenever $((f_n, \mu_n))_{n\in \N} \subset F$ and $f_n \stackrel{\mathrm{bp}}{\to} f$ and
$\mu_n \stackrel{\mathrm{bp}}{\to} \mu$, then also $(f, \mu)\in F$.
\end{enumerate}
Then $F= \Bb(\R^d)\times K^g(\R^d, \mathcal{M}^+(\R^d\setminus\{0\}))$.
\end{cor}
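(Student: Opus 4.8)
The plan is to prove the statement by a straightforward two-step slicing argument, reducing everything to the two bp-density results already established: Proposition~\ref{p.bpdense} for $\R^d$-valued functions and Theorem~\ref{t.bpdense} for measure-valued functions. The point is that assumption (ii) is a joint bp-closedness condition, but by freezing one coordinate at a time we may apply it with a constant sequence in the other coordinate and thereby obtain separate bp-closedness of the relevant slices.

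First I would fix a continuous measure-valued function $\mu \in \Cc(\R^d, \mathcal{M}^+(\R^d\setminus\{0\})) \cap K^g(\R^d, \mathcal{M}^+(\R^d\setminus\{0\}))$ and consider the slice $F_\mu \coloneqq \{ f \in \Bb(\R^d;\R^d) : (f,\mu) \in F\}$. By assumption (i) we have $\Cc(\R^d;\R^d) \subset F_\mu$. Moreover $F_\mu$ is bp-closed in $\Bb(\R^d;\R^d)$: if $f_n \stackrel{\mathrm{bp}}{\to} f$ with $(f_n,\mu) \in F$, then taking the constant sequence $\mu_n \coloneqq \mu$ (which satisfies $\mu_n \stackrel{\mathrm{bp}}{\to}\mu$ precisely because $\mu \in K^g$, so that condition (iii) in Definition~\ref{def.bp} holds) and applying (ii) yields $(f,\mu) \in F$. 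Since $\Cc(\R^d;\R^d)$ is bp-dense in $\Bb(\R^d;\R^d)$ by Proposition~\ref{p.bpdense}, it follows that $F_\mu = \Bb(\R^d;\R^d)$. Hence $\Bb(\R^d;\R^d) \times \big[ \Cc(\R^d,\mathcal{M}^+(\R^d\setminus\{0\})) \cap K^g(\R^d,\mathcal{M}^+(\R^d\setminus\{0\})) \big] \subset F$.

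Next I would fix an arbitrary $f \in \Bb(\R^d;\R^d)$ and consider the slice $F^f \coloneqq \{\mu \in K^g(\R^d,\mathcal{M}^+(\R^d\setminus\{0\})) : (f,\mu) \in F\}$. By the previous step $F^f$ contains $\Cc(\R^d,\mathcal{M}^+(\R^d\setminus\{0\})) \cap K^g(\R^d,\mathcal{M}^+(\R^d\setminus\{0\}))$, and it is bp-closed (in the sense of measure-valued bp-convergence) by applying (ii) with the constant sequence $f_n \coloneqq f$. Theorem~\ref{t.bpdense} then gives $F^f = K^g(\R^d,\mathcal{M}^+(\R^d\setminus\{0\}))$. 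As $f$ was arbitrary, this proves $F = \Bb(\R^d;\R^d) \times K^g(\R^d,\mathcal{M}^+(\R^d\setminus\{0\}))$.

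There is essentially no obstacle: all the genuine content sits in Proposition~\ref{p.bpdense} and Theorem~\ref{t.bpdense}, and the only point requiring a moment's care is the verification that a constant sequence is a legitimate bp-convergent sequence in each coordinate — which in the measure-valued coordinate uses exactly that the limit lies in $K^g(\R^d,\mathcal{M}^+(\R^d\setminus\{0\}))$.
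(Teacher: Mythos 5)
Your proof is correct, and it is essentially the same two-step slicing argument as the paper's: freeze one coordinate via a constant sequence (noting that constancy in the measure coordinate gives a legitimate bp-convergent sequence precisely because the frozen $\mu$ lies in $K^g$), apply the relevant one-variable bp-density result to the other coordinate, then swap roles. The only difference is the order of the two steps — the paper first varies $\mu$ over $K^g$ for fixed $f\in\Cc(\R^d;\R^d)$ using Theorem~\ref{t.bpdense} and then varies $f$ using Proposition~\ref{p.bpdense}, whereas you do the reverse — and this makes no mathematical difference.
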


\begin{proof}
Fixing $f\in \Cc(\R^d; \R^d)$ and considering sequences of the form $(f, \mu_n)_{n\in \N}$ bp-converging to $(f,\mu)$, Theorem \ref{t.bpdense} shows $\Cc(\R^d)\times K^g (\R^d, \mathcal{M}^+(\R^d\setminus\{0\})) \subset F$. Let us now fix $\mu \in K^g (\R^d, \mathcal{M}^+(\R^d\setminus\{0\}))$ and consider sequences $(f_n, \mu)$ bp-converging to $(f, \mu)$. Then Proposition \ref{p.bpdense} yields the claim.
\end{proof}

\section{Continuity of L\'evy-type operators in terms of the symbol and the characteristics} \label{appb}

In the proof of one of our perturbation results, Theorem~\ref{t.perturbeuclidean}, we used that the integro-differential operator
\begin{equation}
	\hat{B}f(x) = b(x) \cdot \nabla f(x) + \int_{\R^d \setminus \{0\}} (f(x+y)-f(x)-y \cdot \nabla f(x) \chi(y)) \, \nu(x,dy)
	\label{eq-intdiff2}
\end{equation}
maps (sufficiently) smooth functions to continuous functions. On $\Cc^{\infty}(\R^d)$, the operator can be considered as a pseudo-differential operator with symbol $q(x,\xi)$, cf.\ \eqref{eq.pseudo}, and so one can equivalently asked for conditions on the symbol ensuring that $\hat{B}f \in \mathcal{C}(\R^d)$ for $f \in \Cc^{\infty}(\R^d)$. The following result answers this question; it is somewhat more refined than what we need for our proof but we believe the result to be of independent interest.

\begin{thm} \label{appb-1}
	Let $\hat{B}$ be as in \eqref{eq-intdiff2} and denote by \begin{equation}
		q(x,\xi) = -i b(x) \cdot \xi + \int_{\R^d \setminus \{0\}} (1-e^{iy \cdot \xi}+iy \cdot \xi \chi(y)) \, \nu(x,dy) \label{appb-eq3}
	\end{equation}
	the associated symbol; here $\chi \in \Cc^{\infty}(\R^d)$ is a smooth cut-off function with $\I_{B(0,1)} \leq \chi \leq \I_{B(0,2)}$. If $q$ is locally bounded, then the following statements are equivalent. \begin{enumerate}
		\item\label{appb-1-iii} $\hat{B}f$ is continuous for every $f \in \Cc^{\infty}(\R^d)$. 
		\item\label{appb-1-i} $x \mapsto q(x,\xi)$ is continuous for all $\xi \in \R^d$.
		\item\label{appb-1-ii} Each of the following conditions is satisfied. \begin{enumerate}
		\item $x \mapsto b(x)$ is continuous.
		\item $x \mapsto \nu(x,\cdot)$ is vaguely continuous on $(\R^d \backslash \{0\},\cB(\R^d \backslash \{0\}))$.
		\item The family $(\nu(x,\cdot))_{x \in K}$, is tight for any compact set $K \Subset \R^d$, i.e.\ $\lim_{R \to \infty} \sup_{x \in K} \nu(x, \{|y| \geq R\}) = 0$.
		\item $\lim_{r \to 0} \sup_{x \in K} \int_{\{|y| \leq r\}} |y|^2 \, \nu(x,dy)=0$ for any compact set $K \Subset \R^d$.
		\end{enumerate}
	\end{enumerate}
	If one (hence, all) of the conditions is satisfied and $\hat{B}: \Cb^{\beta}(\R^d) \to \Bb(\R^d)$ is a bounded operator for some $\beta>0$, then $\hat{B}f$ is continuous for all $f \in \Cb^{\beta}(\R^d) \cap \Co(\R^d)$.
\end{thm}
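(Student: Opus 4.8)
The statement to be upgraded is that $\hat{B}f$ is continuous whenever $f\in\Cc^\infty(\R^d)$; this is condition (i) of the theorem, which we may now assume together with (a)--(d). Since continuity is a local property, I would fix $x_0\in\R^d$ and a compact neighbourhood $K$ of $x_0$ and show that $\hat{B}f|_K$ is a \emph{locally uniform} limit of continuous functions, which forces $\hat{B}f$ to be continuous at $x_0$. To produce such a limit I regularise $f\in\Cb^\beta(\R^d)\cap\Co(\R^d)$: take a standard mollifier $\varrho$ and cut-offs $\phi_n\in\Cc^\infty(\R^d)$ with $\I_{B(0,n)}\le\phi_n\le\I_{B(0,2n)}$ and $\sup_n\|\phi_n\|_{\Cb^2}<\infty$, and set $f_n\coloneqq(f*\varrho_{1/n})\phi_n\in\Cc^\infty(\R^d)$. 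Because $f\in\Co(\R^d)$ is uniformly continuous, $f_n\to f$ uniformly; because convolution with a probability density does not increase the H\"older seminorm and the cut-offs are uniformly bounded in $\Cb^2$, one has $\sup_n\|f_n\|_{\Cb^\beta}<\infty$; and $\nabla^k f_n\to\nabla^k f$ locally uniformly for every $k<\beta$. By the equivalence (i)$\Leftrightarrow$(ii)$\Leftrightarrow$(iii) already established, each $\hat{B}f_n$ is continuous.

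It remains to pass to the limit, i.e.\ to show $\hat{B}f_n\to\hat{B}f$ uniformly on $K$. The cleanest route is interpolation: for $\beta'<\beta$ the space $\Cb^{\beta'}$ is a real interpolation space between $\Cb$ and $\Cb^\beta$ (see \cite{triebel78}), so $\|f_n-f\|_{\Cb^{\beta'}}\le C\,\|f_n-f\|_\infty^{1-\beta'/\beta}\,\big(\sup_m\|f_m-f\|_{\Cb^\beta}\big)^{\beta'/\beta}\to0$; since for an operator of the form \eqref{eq-intdiff2} one has $\|\hat{B}g\|_\infty\le 2\|\mu\|_{\beta_0}\|g\|_{\Cb^{\beta'}}$ whenever $\|\mu\|_{\beta_0}<\infty$ for some $\beta_0<\beta'$ (cf.\ the discussion following Hypothesis~\ref{h2}), we obtain $\hat{B}f_n\to\hat{B}f$ uniformly, hence $\hat{B}f$ is continuous. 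Alternatively, and relying only on (a)--(d), one estimates $\hat{B}g_n$ directly for $g_n\coloneqq f_n-f$: for $0<r<1\le R$ split $\hat{B}g_n$ into the drift term, the small jumps $\{|y|\le r\}$, the intermediate jumps $\{r<|y|<R\}$ and the large jumps $\{|y|\ge R\}$. The drift term tends to $0$ on $K$ since $b$ is continuous (condition (a)) and $\nabla g_n\to0$ locally uniformly; the large-jump term is bounded by $2\|g_n\|_\infty\sup_{x\in K}\nu(x,\{|y|\ge R\})$ for $R\ge2$ (the compensator vanishing there) and tends to $0$ by the tightness condition (c) and local boundedness of $x\mapsto\nu(x,\{|y|\ge R\})$ on $K$; the intermediate term tends to $0$ for fixed $r,R$ because its integrand goes to $0$ uniformly over $K\times\{r\le|y|\le R\}$ while $\sup_{x\in K}\nu(x,\{r\le|y|\le R\})<\infty$; and the small-jump term is bounded, via Taylor's formula and condition (d), by a quantity of the shape $C\sup_{x\in K}\int_{\{|y|\le r\}}|y|^2\,\nu(x,dy)+o_n(1)$, made small by first choosing $r$ and then $n$. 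Combining the four pieces gives the uniform convergence.

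The main obstacle is precisely the small-jump term: one must control $\int_{\{|y|\le r\}}\big(g_n(x+y)-g_n(x)-y\cdot\nabla g_n(x)\chi(y)\big)\,\nu(x,dy)$ uniformly in $x\in K$ and $n$ as $r\to0$. When $\beta\ge2$ the second-order Taylor remainder of $g_n$ is $O(|y|^2)$ and this uniform smallness is exactly what condition (d) supplies; when $\beta\in(1,2)$ the crude remainder estimate only gives $O(|y|^\beta)$, which is \emph{not} controlled by (d) alone, and here one genuinely needs boundedness of $\hat{B}$ on a slightly smaller H\"older scale $\Cb^{\beta'}$, $\beta'<\beta$ --- i.e.\ the interpolation route above. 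This scale boundedness is automatic in all the applications of the theorem, where $\hat{B}$ has the form \eqref{eq-intdiff2} with $\|\mu\|_{\beta_0}<\infty$ for some $\beta_0<\beta$. The case $\beta\le1$ is analogous and simpler, all gradient and compensator terms being absent. The remaining ingredients --- local boundedness of $x\mapsto\nu(x,\{|y|\ge R\})$ on $K$ (from the vague continuity (b) and the upper semicontinuity of $x\mapsto\nu(x,C)$ for compact $C$) and the vanishing of the compensator for $|y|\ge2$ --- are routine.
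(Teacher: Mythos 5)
Your proposal addresses only the final addendum, taking the equivalences (i) $\Leftrightarrow$ (ii) $\Leftrightarrow$ (iii) as given; the paper's proof of those equivalences (especially (ii) $\Rightarrow$ (iii), which proceeds through the pseudo-differential representation $\hat{B}f = -\int q(\cdot,\xi)\hat f(\xi)e^{i\cdot\xi}\,d\xi$ applied to carefully chosen test functions $|\cdot-x|^2\varphi$) is the bulk of the theorem and is not touched by your write-up. For the addendum the paper gives no argument beyond the phrase ``a standard approximation argument,'' so your attempt is the only place where the approximation is actually spelled out.

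Your analysis of the small-jump term is in fact the most valuable part of the proposal, and the worry is not cosmetic: the hypothesis ``$\hat B:\Cb^\beta\to\Bb$ bounded'' is genuinely insufficient when $\beta\in(1,2)$. To see this, take $d=1$, $b\equiv 0$, a smooth $h$ with $\I_{(0,1/2]}\le h\le\I_{(0,1]}$, and the kernel $\nu(x,\cdot):=x^{-\beta}h(x)\,\delta_x$ for $x>0$, $\nu(x,\cdot):=0$ otherwise. One checks that the symbol is locally bounded (the bound $|q(x,\xi)|\le C(1+|\xi|^2)$ follows since $\beta<2$), conditions (a)--(d) all hold, and the Taylor estimate gives $\|\hat Bf\|_\infty\le\beta^{-1}\|f\|_{\Cb^\beta}$, so $\hat B$ is bounded from $\Cb^\beta$ to $\Bb$. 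Yet for $f(y)=|y|^\beta g(y)$ with $g\in\Cc^\infty$, $g(0)=1$, a direct computation yields
\[
\hat Bf(x)=h(x)\bigl[2^\beta g(2x)-(1+\beta)g(x)-xg'(x)\bigr]\;\xrightarrow[x\downarrow 0]{}\;2^\beta-1-\beta>0
\]
while $\hat Bf(0)=0$, so $\hat Bf$ is discontinuous at the origin although $f\in\Cb^\beta(\R)\cap\Co(\R)$. The addendum as printed therefore needs a supplementary hypothesis, and the one you single out --- boundedness of $\hat B$ from $\Cb^{\beta'}$ to $\Bb$ for some $\beta'<\beta$, or equivalently $\|\mu\|_{\beta_0}<\infty$ for some $\beta_0<\beta$ --- is precisely what is available in every application in the paper (where $\beta<\rho$). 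Under that assumption your interpolation route is complete: mollify-and-cut-off to get $\sup_n\|f_n\|_{\Cb^\beta}<\infty$ and $\|f_n-f\|_\infty\to 0$, interpolate to obtain $\|f_n-f\|_{\Cb^{\beta'}}\to 0$, and conclude $\|\hat B(f_n-f)\|_\infty\to 0$ so that $\hat Bf$ is a uniform limit of continuous functions. Your ``direct route'' relying only on (a)--(d), by contrast, is not salvageable without a similar strengthening, as your own discussion of the $O(|y|^\beta)$ remainder correctly indicates. One minor correction to the $\beta\le 1$ remark: the gradient and compensator terms are not ``analogously absent,'' rather the $\Cb^\beta$-boundedness hypothesis already forces them to vanish for $\hat B$ to be defined on $\Cb^\beta$, and the same small-jump obstruction to (d) persists there.
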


\begin{rem} \label{appb-2} \begin{enumerate}[(a)]
	\item The implications \ref{appb-1-i} $\iff$  \ref{appb-1-iii} $\implies$ \ref{appb-1-ii} remain valid for any Borel measurable cut-off function $\chi$ such that $\I_{B(0,1)} \leq \chi \leq \I_{B(0,2)}$.
	\item A symbol $q$ of the form \eqref{appb-eq3} is locally bounded if, and only if, for any compact set $K \subseteq \R^d$ there exists a constant $c>0$ such that $|q(x,\xi)| \leq c(1+|\xi|^2)$ for all $x \in K$, $\xi \in \R^d$. By \cite[Lem.\ 2.1, Rem.\ 2.2]{rs-grow}, this is equivalent to \begin{equation}
		\forall K \Subset \R^d \, \, :\: \sup_{x \in K} |b(x)| + \sup_{x \in K} \int_{\R^d \setminus \{0\}} \min\{|y|^2, 1\} \, \nu(x,dy) <\infty.  \label{appb-eq5}
	\end{equation}
	\end{enumerate}
\end{rem}

\begin{proof}[Proof of Theorem~\ref{appb-1}]
	To keep notation simple, we prove the result only in dimension $d=1$. The implication \ref{appb-1-iii} $\implies$ \ref{appb-1-i} follows from \cite[Thm.\ 4.4]{rs98}. Moreover, if $x \mapsto q(x,\xi)$ is continuous, then we find from the local boundedness of $q$ and the dominated convergence theorem that \begin{equation*}
	  x \mapsto \hat{B}f(x) = - \int_{\R} q(x,\xi) \hat{f}(\xi) e^{ix \xi} \, d\xi
	\end{equation*}
	is continuous for all $f \in \Cc^{\infty}(\R)$, and this proves \ref{appb-1-i} $\implies$ \ref{appb-1-iii}. In the remainder of the proof we show that \ref{appb-1-i} $\iff$ \ref{appb-1-ii}. \par \medskip
	
	\ref{appb-1-ii} $\implies$ \ref{appb-1-i}: By (iii)(a), it suffices to show that \begin{equation*}
		x \mapsto p(x,\xi): = \int_{\R \setminus \{0\}} (1-e^{iy \xi} + i y \xi \chi(y)) \, \nu(x,dy)
	\end{equation*}
	is continuous for all $\xi \in \R$. Clearly, \begin{equation*}
		|p(x,\xi)-p(z,\xi)| \leq I_1 + I_2+I_3,
	\end{equation*}
	where \begin{align*}
		I_1 &:= \frac{|\xi|^2}{2} \left( \int_{\{|y| \leq r\}} |y|^2 \, \nu(x,dy) + \int_{\{|y| \leq r\}} |y|^2 \, \nu(z,dy) \right) \\
		I_2 &:= \left| \int_{\{r<|y|<R\}} (1-e^{iy \xi}+iy \xi \chi(y)) \, \big[\nu(x,dy) - \nu(z,dy)\big] \right| \\
		I_3 &:= 2\nu(x,\{|y| \geq R\}) + 2\nu(z,\{|y| \geq R\}).
	\end{align*}
	The vague continuity in (iii)(b) implies that $I_2 \to 0$ as $z \to x$ for fixed $r,R>0$ with $\nu(x,\{|y|=r\})=0$ and $\nu(x,\{|y|=R\})=0$. Letting first $z \to x$ and then $r \to 0$ and $R \to \infty$, it follows from \ref{appb-1-ii}(c) and \ref{appb-1-ii}(d) that $p(\cdot,\xi)$ is continuous. \par \medskip
	
	\ref{appb-1-i} $\implies$ \ref{appb-1-ii}: By \cite[Thm.\ 4.4]{rs98}, \ref{appb-1-i} implies
	that
	\begin{equation*}
		\forall K \Subset \R^d \, \, :\: \lim_{|\xi| \to 0} \sup_{x \in K} |q(x,\xi)|=0.
	\end{equation*}
	 Moreover, exactly the same reasoning as in \cite[proof of Thm.\ 4.4]{rs98} shows that $\nu(x,\cdot)$, $x \in K$, is tight for any compact set $K$. For $\varphi \in \Cc^{\infty}(\R)$ and $x \in \R$ define 
	 \begin{equation*}
		S_x(\varphi) := \hat{B}(|\cdot-x|^2 \varphi(\cdot-x))(x) = \int_{\R \setminus \{0\}} y^2 \varphi(y) \, \nu(x,dy).
	\end{equation*}
	If we denote by $\mathcal{F}f := \hat{f}$ the Fourier transform of a function $f$, then 
	\begin{equation*}
		\mathcal{F}(|\cdot-x|^2 \varphi(\cdot-x))(\xi) = e^{-ix \xi} \mathcal{F}(|\cdot|^2 \varphi(\cdot))(\xi), \qquad \xi \in \R.
	\end{equation*}
	Since $q$ is locally bounded and $x \mapsto q(x,\xi)$ is continuous for all $\xi \in \R$, an application of the dominated convergence theorem shows that 
	\begin{equation*}
		x \mapsto S_x(\varphi) = - \int_{\R} q(x,\xi) \mathcal{F}(|\cdot-x|^2 \varphi(\cdot-x))(\xi) e^{ix \xi} \, d\xi
	\end{equation*}
	is continuous. Choose $\varphi_k \in \Cc^{\infty}(\R)$ such that $\I_{B(0,1/k)} \leq \varphi_k \leq \I_{B(0,2/k)}$ and $\varphi_{k+1} \leq \varphi_k$. Then $S_x(\varphi_k) \geq S_x(\varphi_{k+1})$ and 
	\begin{equation*}
		S_x(\varphi_k) \leq \int_{|y| \leq 2/k} |y|^2 \, \nu(x,dy) \xrightarrow[]{k \to \infty} 0 \quad \text{for all} \, \, x \in \R.
	\end{equation*}
	Applying Dini's theorem, we find that 
	\begin{equation*}
		\sup_{x \in K} \int_{|y| \leq 1/k} |y|^2 \, \nu(x,dy) \leq \sup_{x \in K} \int y^2 \varphi_k(y) \, \nu(x,dy) = \sup_{x \in K} |S_x(\varphi_k)|\xrightarrow[]{k \to \infty} 0
	\end{equation*}
	for any compact set $K$, and this proves (d).  If we set $\mu(x,dy) := |y-x|^2 \, \nu(x,dy+x)$, then 
	\begin{equation}
		T_x(\varphi) := \hat{B}(|\cdot-x|^2 \varphi(\cdot))(x) =\int_{\R \setminus \{0\}} |y|^2 \varphi(x+y) \, \nu(x,dy) = \int \varphi(y) \, \mu(x,dy) \label{appb-eq6}
	\end{equation}
	for all $\varphi \in \Cc^{\infty}(\R)$. As 
	\begin{align*}
		&\mathcal{F}(|\cdot-x|^2 \varphi(\cdot))(\xi) \\
		&= \quad \frac{1}{2\pi} \left( \int_{\R} y^2 \varphi(y) \,e^{-iy \xi} \, dy -2 x \int_{\R} y \varphi(y) e^{-iy \xi} \, dy + x^2 \int_{\R} \varphi(y) e^{-iy \xi} \, dy \right),
	\end{align*}
	there exists for any compact set $K$ an integrable function $g$ such that $\sup_{x \in K} |\mathcal{F}(|\cdot-x|^2 \varphi)(\xi)| \leq g(\xi)$ for all $\xi \in \R$. Since $x \mapsto q(x,\xi)$ is continuous and locally bounded, the dominated convergence theorem shows that the mapping \begin{equation*}
		x \mapsto T_x(\varphi) = - \int_{\R} q(x,\xi) \mathcal{F}(|\cdot-x|^2 \varphi(\cdot))(\xi) e^{ix \xi} \, d\xi
	\end{equation*} 
	is continuous for all $\varphi \in \Cc^{\infty}(\R)$. By \eqref{appb-eq6} this means that \begin{equation*}
		\int_{\R} \varphi(y) \, \mu(z,dy) \xrightarrow[]{z \to x} \int_{\R} \varphi(y) \, \mu(x,dy) \quad \text{for all} \, \, f \in \Cc^{\infty}(\R), x \in \R.
	\end{equation*}
	Combining this with the fact that the local boundedness of $q$ implies \begin{equation}
		\sup_{z \in K} \int_{|y|\leq R} \, \mu(z,dy)<\infty  \quad \text{for all} \, \, R>0, K \subseteq \R \,\, \text{cpt.} \label{appb-eq7}
	\end{equation}
	cf.\ \eqref{appb-eq5}, we conclude that $\mu(\cdot,dy)$ is vaguely continuous on $(\R,\cB(\R))$.  Using that 
	\begin{align*}
		&\left| \int_{\R} \varphi(y-x) \, \mu(x,dy) - \int_{\R} \varphi(y-z) \, \mu(z,dy) \right|\\
		&\quad \leq \int_{\R} |\varphi(y-x)-\varphi(y-z)| \, \mu(z,dy) 
		+ \left| \int_{\R} \varphi(y-x) \, \big[\mu(z,dy)-  \mu(x,dy)\big] \right|
	\end{align*}
	for all $\varphi \in \Cc(\R)$, it follows from \eqref{appb-eq7} and the vague continuity of $\mu(x,\cdot)$ that \begin{align*}
		\int_{\R} \varphi(y-z) \, \mu(z,dy) \xrightarrow[]{z \to x} \int_{\R} \varphi(y-x) \, \mu(x,dy)
	\end{align*}
	for all $\varphi \in \Cc(\R)$. Since $\nu(x,dy) = \frac{1}{|y|^2} \mu(x,dy-x)$, it is not difficult to see that this implies that $\nu(z,\cdot)$ converges vaguely on $(\R \backslash \{0\},\cB(\R \backslash \{0\}))$ to $\nu(x,\cdot)$ as $z \to x$. To prove continuity of the drift $b$, we note that \begin{align*}
		b(x) = \hat{B}((\cdot-x) \chi(\cdot-x) )(x) 
		&= - \int_{\R} q(x,\xi) e^{ix \xi} \mathcal{F}(((\cdot-x) \chi(\cdot-x))(\xi) \, d\xi \\
		&= - \int_{\R} q(x,\xi) \mathcal{F}(\id(\cdot) \chi(\cdot))(\xi) \, d\xi;
	\end{align*}
	here $\id(y) := y$. Applying the dominated convergence theorem another time, we find that $x \mapsto b(x)$ is continuous. 
	This finishes the proof of the equivalences. Finally, if one (hence, all) conditions of the theorem are satisfied and $\|\hat{B}f\|_{\infty} \leq K \|f\|_{\Cb^{\beta}}$, $f \in C_c^{\infty}(\R^d)$, for some constants $K>0$ and $\beta>0$, then a standard approximation argument yields that $\hat{B}f$ is continuous for every $f \in \Cb^{\beta}(\R^d) \cap \Co(\R^d)$.
\end{proof}

\begin{cor} \label{appb-3} 
	Let $\hat{B}$ be as in \eqref{eq-intdiff2} for a smooth cut-off function $\chi$ with $\I_{B(0,1)} \leq \chi \leq \I_{B(0,2)}$, and denote by $q(x,\xi)$ the symbol of $\hat{B}$, cf.\ \eqref{appb-eq3}. If there is for every compact set $K \subseteq \R^d$ some constant $\alpha \in (0,2)$ such that
	\begin{equation}
		\sup_{x  \in K} |b(x)| + \sup_{x \in K} \left( \int_{\{|y| \leq 1\}} |y|^{\alpha} \, \nu(x,dy)  \right)<\infty,
		\label{eq-intcondition}
	\end{equation}
	and $(\nu(x,\cdot))_{x \in K}$ is tight, then the following statements are equivalent: \begin{enumerate}
		\item $\hat{B}f$ is continuous for every $f \in C_c^{\infty}(\R^d)$,
		\item $x \mapsto q(x,\xi)$ is continuous for all $\xi \in \R^d$,
		\item $x \mapsto b(x)$ is continuous and $x \mapsto \nu(x,\cdot)$ is vaguely continuous.
	\end{enumerate}
\end{cor}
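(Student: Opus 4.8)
The plan is to deduce the corollary from Theorem~\ref{appb-1}. Essentially only two points need checking: that the standing hypotheses of the corollary force the symbol $q$ to be locally bounded, so that Theorem~\ref{appb-1} applies, and that in the presence of these hypotheses condition~\ref{appb-1-ii}(d) of Theorem~\ref{appb-1} holds automatically, so that the weaker condition~(iii) of the corollary is enough.

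First I would verify that $q$ is locally bounded via the characterization \eqref{appb-eq5} in Remark~\ref{appb-2}(b). Fix $K \Subset \R^d$ and let $\alpha = \alpha(K) \in (0,2)$ be as in \eqref{eq-intcondition}. Since $|y|^2 \le |y|^{\alpha}$ for $|y| \le 1$, the integrability bound \eqref{eq-intcondition} gives $\sup_{x \in K} \int_{\{|y| \le 1\}} \min\{|y|^2,1\}\, \nu(x,dy) \le \sup_{x \in K} \int_{\{|y| \le 1\}} |y|^{\alpha}\, \nu(x,dy) < \infty$, while the tightness hypothesis bounds $\sup_{x \in K}\nu(x,\{|y|>1\})$; together with $\sup_{x\in K}|b(x)|<\infty$ this is precisely \eqref{appb-eq5}, so $q$ is locally bounded.

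Theorem~\ref{appb-1} now applies, and the equivalence \ref{appb-1-iii} $\iff$ \ref{appb-1-i} of that theorem is exactly the equivalence of (i) and (ii) in the corollary. Moreover Theorem~\ref{appb-1} shows that \ref{appb-1-i} implies all four conditions \ref{appb-1-ii}(a)--(d), so (ii) $\implies$ (iii). It thus remains to prove (iii) $\implies$ (ii). For this I would check that (iii) together with the standing hypotheses implies condition~\ref{appb-1-ii} of Theorem~\ref{appb-1}; that theorem then yields \ref{appb-1-i}, i.e.\ (ii). Here \ref{appb-1-ii}(a) and \ref{appb-1-ii}(b) are precisely (iii), \ref{appb-1-ii}(c) is the standing tightness hypothesis, and for \ref{appb-1-ii}(d) I would use that, for $0 < r \le 1$,
\[
\sup_{x \in K} \int_{\{|y| \le r\}} |y|^2 \, \nu(x,dy) \le r^{2-\alpha} \sup_{x \in K} \int_{\{|y| \le 1\}} |y|^{\alpha} \, \nu(x,dy) \xrightarrow[]{r \to 0} 0,
\]
since $2 - \alpha > 0$ and the right-hand supremum is finite by \eqref{eq-intcondition}. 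Chaining (i) $\iff$ (ii), (ii) $\implies$ (iii) and (iii) $\implies$ (ii) gives the claimed equivalence.

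I do not expect a real obstacle: the whole argument is a bookkeeping reduction to Theorem~\ref{appb-1}. The one substantive point is that the strict exponent $\alpha < 2$ in \eqref{eq-intcondition} upgrades the (pointwise, hence not obviously uniform) finiteness of $\int_{\{|y|\le 1\}}|y|^2\,\nu(x,dy)$ to the uniform-in-$x$ smallness demanded by \ref{appb-1-ii}(d), which is exactly why the corollary may omit conditions \ref{appb-1-ii}(c),(d) from its hypotheses. The only mild care needed is that $\alpha$ may depend on $K$; since local boundedness of $q$ and conditions \ref{appb-1-ii}(c),(d) are all local in $x$, this causes no trouble.
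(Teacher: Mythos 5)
Your argument is correct and matches the paper's proof, which likewise observes that \eqref{eq-intcondition} (together with tightness) gives local boundedness of $q$ via Remark~\ref{appb-2}, that conditions \ref{appb-1-ii}(c),(d) of Theorem~\ref{appb-1} are automatic under the standing hypotheses, and then invokes Theorem~\ref{appb-1}. Your write-up merely spells out the elementary estimates (e.g.\ $|y|^2\le r^{2-\alpha}|y|^\alpha$ on $\{|y|\le r\}$) that the paper leaves implicit.
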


\begin{proof}
	Because of \eqref{eq-intcondition}, the symbol $q$ is locally bounded, cf.\ Remark~\ref{appb-2}, and moreover \ref{appb-1-ii}(c),(d) are clearly satisfied. Thus, the assertion is immediate from Theorem~\ref{appb-1}.
\end{proof}


\begin{thebibliography}{10}

\bibitem{abhn}
{\sc W.~Arendt, C.~J.~K. Batty, M.~Hieber, and F.~Neubrander}, {\em
  Vector-valued {L}aplace transforms and {C}auchy problems}, vol.~96 of
  Monographs in Mathematics, Birkh\"{a}user/Springer Basel AG, Basel,
  second~ed., 2011.

\bibitem{an00}
{\sc W.~Arendt and N.~Nikolski}, {\em Vector-valued holomorphic functions
  revisited}, Mathematische Zeitschrift, 234 (2000), pp.~777--805.

\bibitem{ban06}
{\sc L.~Arlotti and J.~Banasiak}, {\em Perturbations of {Positive} {Semigroups}
  with {Applications}}, Springer {Monographs} in {Mathematics}, Springer, 2006.

\bibitem{bass}
{\sc R.~F. Bass}, {\em Uniqueness in law for pure jump {Markov} processes},
  Probability Theory and Related Fields, 79 (1988), pp.~271--287.

\bibitem{bill}
{\sc P.~Billingsley}, {\em Convergence of Probability Measures}, Wiley,
  second~ed., 1999.

\bibitem{ltp3}
{\sc B.~B\"ottcher, R.~L. Schilling, and J.~Wang}, {\em {L}\'evy Matters III.
  Lévy-type processes: construction, approximation and sample path
  properties}, Springer, 2014.

\bibitem{chen}
{\sc Z.~Chen, X.~Zhang, and G.~Zhao}, {\em Supercritical {SDEs} driven by
  multiplicative stable-like {L\'evy} processes}, To appear: Transactions of
  the American Mathematical Society.

\bibitem{chen15}
{\sc Z.-Q. Chen and L.~Wang}, {\em Uniqueness of stable processes with drift},
  Proceedings of the American Mathematical Society, 144 (2016), pp.~2661--2675.

\bibitem{dynkin1}
{\sc E.~B. Dynkin}, {\em Markov {Processes} {Vol.} 1}, Springer, 1965.

\bibitem{dynkin2}
\leavevmode\vrule height 2pt depth -1.6pt width 23pt, {\em Markov {Processes}
  {Vol.} 2}, Springer, 1965.

\bibitem{eisner}
{\sc T.~Eisner and A.~Ser\'{e}ny}, {\em On the weak analogue of the
  {T}rotter-{K}ato theorem}, Taiwanese J. Math., 14 (2010), pp.~1411--1416.

\bibitem{en}
{\sc K.~Engel and R.~Nagel}, {\em One-parameter semigroups for linear evolution
  equations}, vol.~194 of Graduate Texts in Mathematics, Springer-Verlag, New
  York, 2000.

\bibitem{ek}
{\sc S.~N. Ethier and T.~G. Kurtz}, {\em Markov processes}, Wiley Series in
  Probability and Mathematical Statistics: Probability and Mathematical
  Statistics, John Wiley \& Sons, Inc., New York, 1986.
\newblock Characterization and convergence.

\bibitem{szcz17}
{\sc T.~Grzywny and K.~Szczypkowski}, {\em Estimates of heat kernels of
  non-symmetric {L\'evy} processes}, Preprint arXiv:1710.07793 [math].
\newblock To appear: Forum Mathematicum.

\bibitem{haase}
{\sc M.~Haase}, {\em The functional calculus for sectorial operators}, no.~v.
  169 in Operator theory, advances and applications, Birkh\"auser Verlag, 2006.

\bibitem{hoh}
{\sc W.~Hoh}, {\em Pseudo differential operators generating {Markov}
  processes}.
\newblock Universit\"at Bielefeld, 1998.
\newblock Habilitationsschrift.

\bibitem{ikeda}
{\sc N.~Ikeda and S.~Watanabe}, {\em Stochastic {Differential} {Equations} and
  {Diffusion} {Processes}}, North-Holland, 1992.

\bibitem{J01}
{\sc N.~Jacob}, {\em Pseudo differential operators and {M}arkov processes.
  {V}ol. {I}: {F}ourier analysis and semigroups}, Imperial College Press,
  London, 2001.

\bibitem{J02}
{\sc N.~Jacob}, {\em Pseudo {Differential} {Operators} and {Markov} {Processes}
  {II}: {Generators} and their potential theory}, Imperial College Press, 2002.

\bibitem{J05}
{\sc N.~Jacob}, {\em Pseudo differential operators and {M}arkov processes.
  {V}ol. {III}: {M}arkov processes and applications}, Imperial College Press,
  London, 2005.

\bibitem{kaleta15}
{\sc K.~Kaleta and P.~Sztonyk}, {\em Estimates of transition densities and
  their derivatives for jump {Lévy} processes}, Journal of Mathematical
  Analysis and Applications, 431 (2015), pp.~260--282.

\bibitem{kallenberg}
{\sc O.~Kallenberg}, {\em Foundations of modern probability}, Probability and
  its Applications (New York), Springer-Verlag, New York, second~ed., 2002.

\bibitem{kato95}
{\sc T.~Kato}, {\em Perturbation {Theory} for {Linear} {Operators}}, Springer,
  1995.

\bibitem{kim14}
{\sc P.~Kim and R.~Song}, {\em Stable process with singular drift}, Stochastic
  Processes and their Applications, 124 (2014), pp.~2479--2516.

\bibitem{kin20}
{\sc D.~Kinzebulatov and K.~R. Madou}, {\em On admissible singular drifts of
  symmetric $\alpha$-stable process}, Preprint arXiv:2002.07001 [math].

\bibitem{knop13}
{\sc V.~Knopova and R.~L. Schilling}, {\em A note on the existence of
  transition probability densities of {Lévy} processes}, Forum Mathematicum,
  25 (2013), pp.~125--149.

\bibitem{kol}
{\sc V.~N. Kolokoltsov}, {\em Markov processes, semigroups and generators},
  no.~38, De Gruyter, 2011.

\bibitem{komatsu84}
{\sc T.~Komatsu}, {\em On the martingale problem for generators of stable
  processes with perturbations}, Osaka Journal of Mathematics, 21 (1984),
  pp.~113--132.

\bibitem{FK17}
{\sc F.~K\"uhn}, {\em Random time changes of {Feller} processes},
  arXiv:1705.02830 [math].
\newblock A slightly different version appeared under the title 'Perpetual
  integrals via random time changes' in the Bernoulli journal.

\bibitem{lm6}
\leavevmode\vrule height 2pt depth -1.6pt width 23pt, {\em L\'evy Matters VI.
  Lévy-type processes: Moments, Construction and Heat Kernel Estimates},
  Springer, 2017.

\bibitem{mp-feller}
\leavevmode\vrule height 2pt depth -1.6pt width 23pt, {\em On martingale
  problems and {Feller} processes}, Electronic Journal of Probability, 23
  (2018).

\bibitem{kuehn-mp}
\leavevmode\vrule height 2pt depth -1.6pt width 23pt, {\em Existence of
  ({Markovian}) solutions to martingale problems associated with {L\'evy}-type
  operators}, Electronic Journal of Probability, 25 (2020), pp.~1--26.

\bibitem{reg-feller}
\leavevmode\vrule height 2pt depth -1.6pt width 23pt, {\em Schauder estimates
  for {Poisson} equations associated with non-local {Feller} generators},
  Journal of Theoretical Probability, 34 (2021), pp.~1506--1578.

\bibitem{euler-maruyama}
{\sc F.~K\"uhn and R.~L. Schilling}, {\em Strong convergence of the
  {Euler}–{Maruyama} approximation for a class of {L\'evy}-driven {SDEs}},
  Stochastic Processes and their Applications, 129 (2019), pp.~2654--2680.

\bibitem{kul21}
{\sc T.~Kulczycki, A.~Kulik, and M.~Ryznar}, {\em On weak solution of {SDE}
  driven by inhomogeneous singular {L\'evy} noise}, Preprint arXiv:2104.08129
  [math].

\bibitem{ryznar15}
{\sc T.~Kulczycki and M.~Ryznar}, {\em Gradient estimates of harmonic functions
  and transition densities for {Lévy} processes}, Transactions of the American
  Mathematical Society, 368 (2015), pp.~281--318.

\bibitem{kulik19}
{\sc A.~M. Kulik}, {\em On weak uniqueness and distributional properties of a
  solution to an {SDE} with $\alpha$-stable noise}, Stochastic Processes and
  their Applications, 129 (2019), pp.~473--506.

\bibitem{k11}
{\sc M.~Kunze}, {\em A {P}ettis-type integral and applications to transition
  semigroups}, Czechoslovak Math. J., 61(136) (2011), pp.~437--459.

\bibitem{K13}
\leavevmode\vrule height 2pt depth -1.6pt width 23pt, {\em Perturbation of
  strong {F}eller semigroups and well-posedness of semilinear stochastic
  equations on {B}anach spaces}, Stochastics, 85 (2013), pp.~960--986.

\bibitem{kurtz}
{\sc T.~G. Kurtz}, {\em Equivalence of {Stochastic} {Equations} and
  {Martingale} {Problems}}, in Stochastic {Analysis} 2010, D.~Crisan, ed.,
  Springer, 2011, pp.~113--130.

\bibitem{lunardi}
{\sc A.~Lunardi}, {\em Analytic semigroups and optimal regularity in parabolic
  problems}, Modern Birkh\"{a}user Classics, Birkh\"{a}user/Springer Basel AG,
  Basel, 1995.
\newblock [2013 reprint of the 1995 original] [MR1329547].

\bibitem{mik14}
{\sc R.~Mikulevicius and H.~Pragarauskas}, {\em On the {Cauchy} {Problem} for
  {Integro}-differential {Operators} in {Hölder} {Classes} and the
  {Uniqueness} of the {Martingale} {Problem}}, Potential Analysis, 40 (2014),
  pp.~539--563.

\bibitem{peng}
{\sc J.~Peng}, {\em Uniqueness in law for stable-like processes of variable
  order}, Preprint arXiv:1802.01151 [math].

\bibitem{portenko94}
{\sc N.~I. Portenko}, {\em Some perturbations of drift-type for symmetric
  stable processes}, Random Operators and Stochastic Equations, 2 (1994),
  pp.~211--224.

\bibitem{protter}
{\sc P.~E. Protter}, {\em Stochastic {Integration} and {Differential}
  {Equations}}, vol.~21, Springer, 2005.

\bibitem{revuz-yor}
{\sc D.~Revuz and M.~Yor}, {\em Continuous {Martingales} and {Brownian}
  {Motion}}, Springer, 1999.

\bibitem{sato}
{\sc K.~Sato}, {\em L\'evy processes and infinitely divisible distributions},
  Cambridge University Press, Cambridge, 2013.

\bibitem{rs98}
{\sc R.~L. Schilling}, {\em Conservativeness and extensions of {Feller}
  semigroups}, Positivity, 2 (1998), pp.~239--256.

\bibitem{rs-grow}
\leavevmode\vrule height 2pt depth -1.6pt width 23pt, {\em Growth and
  {H\"older} conditions for the sample paths of {Feller} processes},
  Probability Theory and Related Fields, 112 (1998), pp.~565--611.

\bibitem{mims}
\leavevmode\vrule height 2pt depth -1.6pt width 23pt, {\em Measures, integrals
  and martingales}, Cambridge University Press, second~ed., 2017.

\bibitem{ssw}
{\sc R.~L. Schilling, P.~Sztonyk, and J.~Wang}, {\em Coupling property and
  gradient estimates of {Lévy} processes via the symbol}, Bernoulli, 18
  (2012), pp.~1128--1149.

\bibitem{stewart}
{\sc H.~B. Stewart}, {\em Generation of analytic semigroups by strongly
  elliptic operators}, Trans. Amer. Math. Soc., 199 (1974), pp.~141--162.

\bibitem{stroock75}
{\sc D.~Stroock}, {\em Diffusion processes associated with {L\'evy}-type
  operators}, Zeitschrift für Wahrscheinlichkeitstheorie und Verwandte
  Gebiete, 32 (1975), pp.~209--244.

\bibitem{taira1}
{\sc K.~Taira}, {\em On the existence of {F}eller semigroups with discontinuous
  coefficients}, Acta Math. Sin. (Engl. Ser.), 22 (2006), pp.~595--606.

\bibitem{taira2}
\leavevmode\vrule height 2pt depth -1.6pt width 23pt, {\em On the existence of
  {F}eller semigroups with discontinuous coefficients. {II}}, Acta Math. Sin.
  (Engl. Ser.), 25 (2009), pp.~715--740.

\bibitem{tanaka}
{\sc H.~Tanaka, M.~Tsuchiya, and S.~Watanabe}, {\em Perturbation of drift-type
  for {L\'evy} processes}, Kyoto Journal of Mathematics, 14 (1974), pp.~73--92.

\bibitem{triebel78}
{\sc H.~Triebel}, {\em Interpolation theory, function spaces, differential
  operators}, North-{Holland} mathematical library, North-Holland Pub. Co,
  1978.

\bibitem{zan02}
{\sc P.~A. Zanzotto}, {\em On stochastic differential equations driven by a
  {Cauchy} process and other stable {Lévy} motions}, The Annals of
  Probability, 30 (2002), pp.~802--825.

\end{thebibliography}
\end{document}